\def\bn{\mathbb{N}}
\def\bc{\mathbb{C}} 
\def\bd{\mathbb{D}}
\def\bi{\mathbb{I}}
\def\bj{\mathbb{J}}
\def\h{\mathcal{H}} 
\def\k{\mathcal{K}} 
\def\l{\mathcal{L}}
\newcommand\cs{\mathcal{S}}
\newcommand\crr{\mathcal{R}}
\newcommand{\com}[1]{#1^{\prime}}
\newcommand{\inner}[2]{\langle #1,#2\rangle} 
\newcommand{\sumjn}{\sum_{j=1}^{n}}
\newcommand{\bh}{{\rm B}(\mathcal{H})}
\newcommand{\bhp}{{\rm B}(\mathcal{H}_{\pi})}
\newcommand{\bk}{{\rm B}(\mathcal{K})}
\newcommand{\ch}{{\rm C}(\mathcal{H})}
\newcommand{\trh}{{\rm T}(\mathcal{H})}
\newcommand{\trht}{{\rm T}(\mathcal{H}_2)}
\newcommand{\kh}{{\rm K}(\mathcal{H})}
\newcommand{\dxa}{D_{\xi}(a)}
\newcommand{\dxaa}{D_{\xi}(a^*)}
\newcommand{\dxbb}{D_{\xi}(b^*)}
\newcommand{\dxb}{D_{\xi}(b)}
\newcommand{\doa}{D_{\omega}(a)}
\newcommand{\doaa}{D_{\omega}(a^*)}
\newcommand{\dob}{D_{\omega}(b)}
\newcommand{\dobb}{D_{\omega}(b^*)}
\newcommand{\dis}[2]{D_{#1}(#2)}
\newcommand{\weakc}[1]{\overline{#1}}
\newcommand{\normc}[1]{\overline{\overline{#1}}}
\newcommand{\sap}[1]{\sigma_{\rm ap}(#1)}
\newcommand{\matn}[1]{{\rm M}_n(#1)}
\newcommand{\mattwo}[1]{{\rm M}_2(#1)}
\newcommand{\matm}[1]{{\rm M}_m(#1)}
\newcommand{\ort}[1]{#1^{\perp}}
\newcommand{\rea}[1]{{\rm Re}\,{#1}}
\newcommand{\xieta}{\xi\otimes\eta^*}
\newcommand{\xixi}{\xi\otimes\xi^*}
\newcommand{\du}[1]{#1^{\sharp}}
\newcommand{\bdu}[1]{#1^{\sharp\sharp}}
\newcommand{\dbar}{\overline{\partial}}
\newtheorem{theorem}{Theorem}[section]
\newtheorem{lemma}[theorem]{Lemma}
\newtheorem{co}[theorem]{Corollary}
\newtheorem{pr}[theorem]{Proposition}
\theoremstyle{remark}
\newtheorem{re}[theorem]{Remark}
\theoremstyle{definition}
\newtheorem{de}[theorem]{Definition}
\numberwithin{equation}{section}
\begin{document}

\title[]{Variance of operators and derivations} 
\author{Bojan Magajna} 
\address{Department of Mathematics\\ University of Ljubljana\\
Jadranska 21\\ Ljubljana 1000\\ Slovenia}
\email{Bojan.Magajna@fmf.uni-lj.si}

\thanks{Acknowledgment. I am grateful to Miran \v Cerne for discussions concerning complex anaysis topics, 
to Matej Bre\v sar and \v Spela \v Spenko for
conversations from which some of the questions studied in this paper have emerged, and to 
Victor Shulman for the correspondence concerning a question about Besov spaces.  I am especially grateful to the anonymous referee for all his comments and corrections of the paper.}

\thanks{The author was supported in part by the Ministry of Science
and Education of Slovenia.}

\keywords{bounded linear operator, variance, state, derivation, completely bounded map, subnormal operator}

\subjclass[2010]{Primary 47B06, 47B47, 46L07; Secondary 47A60, 47B15, 47B20}

\begin{abstract}The variance of a bounded linear operator $a$ on a Hilbert space $\h$ at a unit vector $\xi$ is defined 
by $\dxa=\|a\xi\|^2-|\inner{a\xi}{\xi}|^2$. We show that two operators $a$ and $b$ have the same variance at all vectors 
$\xi\in\h$ if and
only if there exist scalars $\sigma,\lambda\in\bc$ with $|\sigma|=1$ such that $b=\sigma a+\lambda1$ or $a$ is normal and
$b=\sigma a^*+\lambda1$. Further, if $a$ is normal, then the inequality $\dxb\leq\kappa \dxa$ holds for some constant 
$\kappa$ and
all unit vectors $\xi$ if and only if $b=f(a)$ for a Lipschitz function $f$ on the spectrum of $a$.
Variants of these results for C$^*$-algebras are also proved, where vectors are replaced by pure states.

We also study the related inequalities $\|bx-xb\|\leq
\|ax-xa\|$ supposed to hold for all $x\in\bh$ or for all $x\in{\rm B}(\h^n)$ and all $n\in\bn$.  
We consider the connection between such inequalities and the range inclusion
$d_b(\bh)\subseteq d_a(\bh)$, where $d_a$ and $d_b$ are the derivations on $\bh$ induced by $a$ and $b$. 
If $a$ is subnormal, we study these conditions
in particular in the case when $b$ is of the form $b=f(a)$ for a function $f$.

\end{abstract}

\maketitle

\section{Introduction and notation}

The expected value of a quantum mechanical quantity represented by a selfadjoint operator $a$ on a complex Hilbert space
$\h$ in a state
$\omega$  is $\omega(a)$, while the {\em variance} of
$a$  is defined by $\doa=\omega(a^*a)-|\omega(a)|^2$. If $a$ is the multiplication by
a bounded measurable function on $L^2(\mu)$ for a probability measure $\mu$ and $\omega$ is the state
$x\mapsto\inner{x1}{1}$, where $1\in L^2(\mu)$ is the constant function, these notions reduce to the classical
notions of probability calculus. We may define the variance by the same formula for all
(not necessarily selfadjoint) operators $a\in\bh$. For a general vector state $\omega(a):=\inner{a\xi}{\xi}$, 
coming from a unit vector
$\xi\in\h$, the variance $\doa=\|a\xi\|^2-|\inner{a\xi}{\xi}|^2$ means just the square of the distance of 
$a\xi$ to the set of all scalar multiples of $\xi$. (Thus  $\doa=\eta_a(\xi)^2$, where $\eta$ is the 
function considered by Brown and Pearcy in \cite{BP}.)  We will prove that an operator $a$ is almost determined by its
variances: if $a,b\in\bh$ are such that $\doa=\dob$ for all vector states $\omega$
then $b=\alpha a+\beta1$ or $a$ is normal and $b=\alpha a^*+\beta1$ for some $\alpha,\beta\in\bc$ with $|\alpha|=1$
(Theorem \ref{th1}). We will also deduce a variant of this statement for C$^*$-algebras, where vector states are
replaced by pure states.

Then we will study the inequality 
\begin{equation}\label{-1}\dob\leq\kappa\doa,\end{equation} where $\kappa$ is a positive constant (which may be taken
to be $1$ if we replace $b$ by $\kappa^{-1/2}b$). If (\ref{-1}) holds for all vector states $\omega$,
then we will show that there exists a Lipschitz function $f:\sap{a}\to\sap{b}$, where $\sap{\cdot}$
denotes the approximate point spectrum, such that if $a$ is normal then $b=f(a)$ (Theorem \ref{th3}). For a general
$a$, however, $f$ is perhaps not nice enough to allow the definition of $f(a)$. Therefore we will also consider stronger variants of
(\ref{-1}).

For $2\times 2$ matrices (\ref{-1}) implies that $b=\alpha a+\beta1$ for some
scalars $\alpha,\beta\in\bc$ (Lemma \ref{le2}). But for general operators the condition  (\ref{-1}) 
is not very restrictive 
for it does not even imply that $b$ commutes with $a$. For example, if $a$ is hyponormal (\ref{-1}) holds with $b=a^*$ and $\kappa=1$. 
A simple 
computation (Lemma \ref{le41}) shows, 
however,  that for a vector state $\omega=\omega_{\xi}$ and a {\em normal} operator $a$ the quantity $\doa$ is just the square of the norm
of the operator $d_a(\xi\otimes\xi^*)$, where $d_a$ is the derivation on $\bh$, defined by
$d_a(x)=ax-xa$, and $\xi\otimes\xi^*$ is the rank one operator on $\h$, defined by $(\xi\otimes\xi^*)
\eta=\inner{\eta}{\xi}\xi$. Thus we will also study the  condition 
\begin{equation}\label{01}\|d_b(x)\|\leq\kappa\|d_a(x)\| \ \ (\forall x\in\bh),\end{equation}  where $a,b\in\bh$ and
$\kappa>0$ are fixed.  
We will show (Theorem \ref{th42}) that if equality holds in (\ref{01}) and $\kappa=1$ then  either
$b=\sigma a+\lambda1$ for some scalars $\sigma,\lambda\in\bc$ with $|\sigma|=1$ or there exist a unitary $u$ and 
scalars $\alpha,\beta,\lambda,\mu$ in $\bc$ with $|\beta|=|\alpha|$ such that $a=\alpha u^*+\lambda1$ 
and $b=\beta u+\mu1$. This will also be generalized to C$^*$-algebras.

For a normal operator $a$  Johnson and Williams  \cite{JW} proved that the condition  (\ref{01})
is equivalent to the range inclusion $d_b(\bh)\subseteq d_a(\bh)$. Their work was continued
by several researchers, including Williams \cite{W}, Fong \cite{F},  Kissin and Shulman \cite{KS}, Bre\v sar \cite{Br}   and in 
\cite{BMS} in different contexts, but still restricted to special classes of operators $a$ (such as normal,
isometric or algebraic). 
It is known that the range inclusion $d_b(\bh)\subseteq d_a(\bh)$ does not imply (\ref{01}) in general
since it does not even imply that $b$ is in the bicommutant $(a)^{\prime\prime}$ of $a$ \cite{Ho}. However the
author does not know of any  operators $a,b$ satisfying (\ref{01}) for which the
range inclusion does not hold. The corresponding purely algebraic problem for operators
on an (infinite dimensional) vector space $\mathcal{V}$, where $\bh$ is replaced by the algebra
${\rm L}(\mathcal{V})$ of all linear operators on $\mathcal{V}$ and the condition 
(\ref{01}) is replaced by the inclusion of the
kernels $\ker d_a\subseteq\ker d_b$, was studied in  \cite{M1}.

By the Hahn-Banach theorem the inclusion $d_b(\bh)\subseteq\normc{d_a(\bh)}$ (the norm closure) is equivalent to the requirement that
for each $\rho\in\du{\bh}$ (the dual of $\bh$) the condition $a\rho-\rho a=0$ implies $b\rho-\rho b=0$,
where $a\rho$ and $b\rho$ are functionals on $\bh$ defined by $(a\rho)(x)=\rho(xa)$ and $(\rho a)(x)=\rho(ax)$.
The operator spaces $\bh$ and $\du{\bh}$ are quite different (if $\h$ is infinite dimensional), so in general we can not 
expect a strong connection between (\ref{01}) and a formally similar condition
$$\label{03}\|b\rho-\rho b\|\leq\kappa\|a\rho-\rho a\|\ \ (\forall x\in\du{\bh}).$$
 
\smallskip
{\em Question.} Does (\ref{01}) imply at least
that the centralizer $C_a$ of $a$ in $\du{\bh}$ (that is, the set of all $\rho\in\du{\bh}$ satisfying $a\rho=\rho a$)
is contained in $C_b$?
\smallskip

A  stronger condition than (\ref{01}), namely that (\ref{01}) holds for all $x\in\matn{\bh}$
and all $n\in\bn$ (where $a$ and $b$ are replaced by the multiples $a^{(n)}$ and $b^{(n)}$ acting on
$\h^n$), implies that (\ref{01}) holds in any representation of the C$^*$-algebra generated by $a,b$ and $1$
(Lemma \ref{le61}) and that $b$ is contained in the C$^*$-algebra generated by $a$ and $1$ (Corollary \ref{co610}).
In a special situation (when $\h$ is a cogenerator for Hilbert modules over the operator algebra  
$A_0$ generated by $a$ and $1$) it follows that  $b$ must be in $A_0$ (Proposition \ref{th62}). 
If $a$ is, say, subnormal (a restriction of a normal operator to an invariant subspace), this means that $b=f(a)$ for 
a function $f$ in the uniform closure of polynomials
on $\sigma(a)$. Perhaps for a general subnormal operator $a$ (\ref{01}) does not imply
that $b=f(a)$ for a function $f$, but when it does, it forces on $f$ certain degree of
regularity. For example, if $a$ is the operator of multiplication on the Hardy space $H^2(G)$ by the 
identity function on $G$, where
$G$ is a domain in $\bc$ bounded by finitely many nonintersecting analytic Jordan curves, (\ref{01}) implies
that $b$ is an analytic Toeplitz operator with a symbol $f$ which is continuous also on the boundary of $G$ 
(Proposition \ref{pr6101}).

Let us  call a complex function
$f$ on a compact set $K\subseteq\bc$ a {\em Schur function} if the supremum over all (finite) sequences 
$\lambda=(\lambda_1,\lambda_2,\ldots)\subseteq K$ of  norms of  matrices
$$\Lambda(f;\lambda)=\left[\frac{f(\lambda_i)-f(\lambda_j)}{\lambda_i-\lambda_j}\right],$$ 
regarded as Schur multipliers, is finite. (Here the quotient is interpreted as $0$ if $\lambda_i=\lambda_j$.) 
If $a$ is normal the work of Johnson and Williams \cite{JW} tells us
that  $b=f(a)$ satisfies (\ref{01}) if and only if $f$ is a Schur function on $\sigma(a)$. 
In the `only if' direction we extend this to general subnormal operators (Proposition \ref{pr521}), in the other direction
only to subnormal operators with nice spectra (Theorem \ref{th621}).  

In the last section  we will investigate the condition (\ref{01}) in the case when $a$ is subnormal and $b=f(a)$ for
a function $f$. If $a$ is normal, a known effective method of studying such commutator estimates is based on double
operator integrals (see \cite{APPS} and the references there), which are defined via spectral projection 
valued measures. But, since invariant subspaces of a normal operator are not necessarily invariant under its spectral
projections, a different method is needed for subnormal operators. In Section 7 we will
`construct' for a given subnormal operator $a$ and suitable function $f$ on $\sigma(a)$ a completely bounded
map $T_{a,f}$ on $\bh$ such that $T_{a,f}$ commutes with the left and the right multiplication by $a$ and $aT_{a,f}(x)-T_{a,f}(x)a=f(a)x-xf(a)$ for all $x\in\bh$. For $b=f(a)$ this implies
(\ref{01}) and also the range inclusion $d_b(\bh)\subseteq d_a(\bh)$. By the above mentioned
result from \cite{JW} even if $a$ is normal the functions $f$ considered here must be Schur.  By \cite{JW} every
Schur function on $\sigma(a)$ is complex differentiable relative to $\sigma(a)$ at each nonisolated point of $\sigma(a)$ 
(thus holomorphic on the interior of $\sigma(a)$) and $f^{\prime}$ is bounded. The construction of $T_{a,f}$ applies
to the subclass that includes all functions for which $f^{\prime}$ is Lipschitz of order $\alpha>0$. Only if $\sigma(a)$
is sufficiently nice are we able to find $T_{a,f}$ for all Schur functions.

We will denote by $\normc{S}$ the norm closure and by $\weakc{S}$ the weak* closure of a subset $S$ in $\bh$.

\section{Variance of operators}

\begin{de}For a bounded operator $a$ on a Hilbert space $\h$ and a nonzero vector $\xi\in\h$ let 
$$\dxa=(\|a\xi\|^2\|\xi\|^2-|\inner{a\xi}{\xi}|^2)\|\xi\|^{-2}.$$
Thus, if $\xi$ is a unit vector and $\omega:x\mapsto\inner{x\xi}{\xi}$ is the corresponding vector state on $\bh$,
then 
$$\dxa=\omega(a^*a)-|\omega(a)|^2,$$
and this formula can be used to define the {\em variance} $D_{\omega}(a)$ of $a$ in any (not just vector) state $\omega$.
\end{de}

\begin{re}\label{re0} (i) It is clear from the definition that $\dxa$ is just the square of the distance of $a\xi$ to the set 
$\bc\xi$ of scalar multiples of $\xi$. Hence, if $\dxb\leq\dxa$ for all $\xi\in\h$, then in particular each 
eigenvector of $a$ is also an eigenvector for $b$. Consequently $\dxb=0$ for all unit vectors $\xi\in\h$ if and only if $b\in\bc1$.

(ii) $\dis{\xi}{\alpha a+\beta1}=|\alpha|^2\dxa$ for all $a,b\in\bh$ and $\alpha,\beta\in\bc$.

(iii) $\dis{\xi}{a^*}=\dxa$ for all $\xi\in\h$ if and only if $a$ is normal.
\end{re}

\begin{theorem}\label{th1}If operators $a,b\in\bh$ satisfy $\dxb=\dxa$ for all $\xi\in\h$, then there exist $\alpha,\beta\in\bc$ with $|\alpha|=1$ such that
$b=\alpha a+\beta$ or $a$ is normal and $b=\alpha a^*+\beta$.
\end{theorem}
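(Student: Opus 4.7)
The plan is to reformulate the hypothesis as a polarizable biquadratic identity, extract common eigenvalue information for $a$ and $b$, and then split according to whether $a$ is normal. The starting observation is that $\|\xi \wedge a\xi\|^2 = \|\xi\|^2\|a\xi\|^2 - |\inner{a\xi}{\xi}|^2$, so the hypothesis $\dxa = \dxb$ is equivalent to $\|\xi \wedge a\xi\|^2 = \|\xi \wedge b\xi\|^2$ for every $\xi \in \h$. Both sides are bihomogeneous of bidegree $(2,2)$ in $(\xi,\bar\xi)$, so polarization in $\xi$ yields $\inner{\xi \wedge a\xi}{\eta \wedge a\eta} = \inner{\xi \wedge b\xi}{\eta \wedge b\eta}$ for all $\xi,\eta\in\h$. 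Expanding via the Gram determinant $\inner{u_1\wedge u_2}{v_1\wedge v_2} = \inner{u_1}{v_1}\inner{u_2}{v_2} - \inner{u_1}{v_2}\inner{u_2}{v_1}$ and then polarizing in $\eta$ converts this into the equivalent symmetric-tensor identity
\begin{equation*}
\xi \odot (a^*a - b^*b)\xi = a^*\xi \odot a\xi - b^*\xi \odot b\xi, \qquad \xi \in \h, \quad (\ast)
\end{equation*}
in $\h \odot \h$. This is my main working identity.

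Next I would extract spectral information. By Remark \ref{re0}(i), $\xi \wedge a\xi = 0$ iff $\xi$ is an eigenvector of $a$, and likewise for $b$, so $a$ and $b$ share their eigenvectors; let $f: \sigma_p(a) \to \sigma_p(b)$ be the induced eigenvalue map. Evaluating $\dis{\xi_1+t\xi_2}{a}$ for orthonormal eigenvectors $\xi_1,\xi_2$ of $a$ with eigenvalues $\lambda_1,\lambda_2$ gives $\dis{\xi_1+t\xi_2}{a} = |t|^2|\lambda_1-\lambda_2|^2/(1+|t|^2)^2$; equating with the analogous expression for $b$ forces $|f(\lambda_1)-f(\lambda_2)| = |\lambda_1 - \lambda_2|$. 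Hence $f$ is an isometric embedding of $\sigma_p(a)\subset\bc$ into $\bc$, and (unless $\sigma_p(a)$ is collinear, a degenerate case where both target forms coincide) extends uniquely to $z \mapsto \alpha z + \beta$ or $z \mapsto \alpha \bar z + \beta$ with $|\alpha| = 1$. If $a$ is normal, the spectral theorem provides approximate eigenvectors for every $\lambda \in \sigma(a)$, and a continuity argument applied to $(\ast)$ along such sequences then yields $b = \alpha a + \beta$ or $b = \alpha a^* + \beta$.

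If $a$ is not normal, Remark \ref{re0}(iii) supplies $\xi$ with $\dis{\xi}{a^*} \neq \dxa$, which rules out $b = \alpha a^* + \beta$; only $b = \alpha a + \beta$ remains and must be derived from $(\ast)$ directly. For this I would contract $(\ast)$ against $\eta \otimes \eta$ to obtain the scalar identity
\begin{equation*}
\inner{\xi}{\eta}\inner{(a^*a-b^*b)\xi}{\eta} = \inner{a^*\xi}{\eta}\inner{a\xi}{\eta} - \inner{b^*\xi}{\eta}\inner{b\xi}{\eta},
\end{equation*}
treat it as a polynomial identity in the independent complex variables $\xi_i,\bar\eta_j$, and apply unique factorization in $\bc[\xi_i,\bar\eta_j]$: each of the bilinear forms $\inner{a\xi}{\eta}, \inner{a^*\xi}{\eta}, \inner{\xi}{\eta}$ is irreducible when the corresponding operator has rank at least $2$, so the resulting factorization forces
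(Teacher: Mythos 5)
Your identity $(\ast)$ is correct, and it is in fact exactly the identity the paper extracts as the coefficient of $z^2$ in the expansion of $\dis{\xi+z\eta}{a}$: one has $\inner{\xi\wedge a\xi}{\eta\wedge a\eta}=\inner{\xi}{\eta}\inner{a^*a\xi}{\eta}-\inner{a^*\xi}{\eta}\inner{a\xi}{\eta}$, so the polarization route is a legitimate (and tidy) way to reach the same starting point, and the two-eigenvector computation giving $|f(\lambda_1)-f(\lambda_2)|=|\lambda_1-\lambda_2|$ is also correct. But from there the proposal does not close. It literally breaks off mid-sentence at the decisive step (``the resulting factorization forces''), and the unique-factorization idea as stated cannot work: the scalar identity has the shape $P_1P_2=Q_1Q_2-R_1R_2$ with all six factors irreducible bilinear forms, and a UFD argument says nothing about a \emph{difference} of products. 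What the identity actually yields --- and this is how the paper uses it --- comes from restricting $\eta$ to $\{\xi,a\xi\}^{\perp}$: then $\inner{b\xi}{\eta}\inner{b^*\xi}{\eta}=0$ for all such $\eta$, and since a vector space is not the union of two proper subspaces, for each fixed $\xi$ either $b\xi\in\bc\xi+\bc a\xi$ or $b^*\xi\in\bc\xi+\bc a\xi$.

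More seriously, the genuinely hard part of the theorem is absent from your plan: given that pointwise dichotomy, one must show that a single alternative holds with \emph{constant} coefficients, i.e.\ that $b-\alpha a$ (or $b-\alpha a^*$) is a scalar for one fixed $\alpha$ with $|\alpha|=1$. The paper does this by writing $\h=F_1\cup F_2$ with $F_1,F_2$ closed, using Baire to place an open set inside one of them, parametrizing segments $\xi(z)=(1-z)\xi+z\eta$, showing via Cramer's rule that the coefficient $\alpha(z)$ in $b\xi(z)=\alpha(z)a\xi(z)+\beta(z)\xi(z)$ is holomorphic while $\dis{\xi(z)}{b}=\dis{\xi(z)}{a}$ forces $|\alpha(z)|=1$, hence $\alpha$ is constant; nothing in your proposal replaces this. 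Your normal-case branch (``a continuity argument applied to $(\ast)$ along approximate eigenvector sequences then yields $b=\alpha a+\beta$ or $b=\alpha a^*+\beta$'') is an assertion, not an argument: passing from behaviour on approximate eigenvectors to a global operator identity is essentially the content of the paper's Theorem \ref{th3}, which needs the Weyl--von Neumann--Berg/Voiculescu machinery just to establish $ab=ba$. Finally, the eigenvalue map $f$ on $\sigma_p(a)$ carries no information when $a$ has no eigenvalues, which is the generic situation in infinite dimensions, so that whole strand of the argument cannot bear the weight you place on it.
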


\begin{proof}We assume that neither $a$ no $b$ is a scalar multiple of the identity, otherwise the proof is easy. If $b$ is of the form $b=\alpha a+\beta 1$ ($\alpha,\beta\in\bc$), then the hypothesis $\dxb=\dxa$ for all $\xi\in\h$, that is, $|\alpha|^2\dxa=\dxa$, clearly implies that $|\alpha|=1$. To deduce a similar conclusion in the case whenf $b=\alpha a^*+\beta1$,  replacing $b$ with $b-\beta1$, we may assume that $b=\alpha a^*$.  From the hypothesis we have that $|\alpha|^2\dis{\xi}{a^*}=\dxa$, which implies that $a$ and $a^*$ have the same eigenvectors, hence, if $\dim\h<\infty$, we can see inductively that $a$ is normal. To prove the same in general, we consider the distance $d(a,\bc1):=\inf_{\lambda\in\bc}\|a-\lambda1\|=d(a^*,\bc1)$. We may assume that this distance (and also $d(b,\bc1)=|\alpha|d(a^*,\bc1)$) is achieved at $\lambda=0$
(otherwise we just consider $a-\lambda1$ instead of $a$). Then by \cite[Theorem 2]{Sta} there exists a sequence of unit vectors $\xi_n\in\h$ such that $\lim_n\inner{a\xi_n}{\xi_n}=0$ and $\lim_n\|a\xi_n\|=\|a\|$. From $\dis{\xi_n}{a}=|\alpha|^2\dis{\xi_n}{a^*}$ it now follows
$$\|a\|^2=\lim_{n\to\infty}\|a\xi_n\|^2=|\alpha|^2\lim_{n\to\infty}\|a^*\xi_n\|^2\leq|\alpha|^2\|a^*\|^2=|\alpha|^2\|a\|^2.$$
This implies that $|\alpha|\geq1$ and similarly we prove (by exchanging the roles of $a$ and $a^*$) that $|\alpha|\leq1$. Thus $|\alpha|=1$ and from $\dis{\xi}{a^*}=\dxa$ (for all $\xi\in\h$) we now see that $a$ must be normal. To prove the theorem, we will now assume that neither $b$ nor $b^*$ is of the form $\alpha a+\beta1$ and show that this leads to a contradiction.

For any two nonzero vectors $\xi,\eta\in\h$ we expand the function 
$$f(z):=\dis{\xi+z\eta}{a}\|\xi+z\eta\|^2=\|a(\xi+z\eta)\|^2\|\xi+z\eta\|^2-|\inner{a(\xi+z\eta)}{\xi+z\eta}|^2$$
of the complex variable $z$ into powers of $z$ and $\overline{z}$,
$$f(z)=\dxa\|\xi\|^2+2\rea(D_1z)+2\rea(D_2z^2)+D_3|z|^2+2\rea(D_4|z|^2z)+\dis{\eta}{a}\|\eta\|^2|z|^4.$$
Among the coefficients $D_j$ we will need to know only $D_2$, which is
$$D_2=\inner{a\eta}{a\xi}\inner{\eta}{\xi}-\inner{a\eta}{\xi}\inner{\eta}{a\xi}.$$
Thus, from the equality $\dis{\xi+z\eta}{a}=\dis{\xi+z\eta}{b}$, by considering the coefficients of $z^2$
we obtain
$$\inner{b\eta}{b\xi}\inner{\eta}{\xi}-\inner{b\eta}{\xi}\inner{\eta}{b\xi}=\inner{a\eta}{a\xi}\inner{\eta}{\xi}-\inner{a\eta}{\xi}\inner{\eta}{a\xi}.$$
From this we see that if $\eta$ is orthogonal to $\xi$ and $a\xi$ then $\eta$ must be orthogonal to $b\xi$ or to $b^*\xi$. In other words,
if for a fixed $\xi$ we denote
$$\h_0(\xi)=\{\xi,a\xi\}^{\perp},\ \ \ \h_1(\xi)=\{\xi,a\xi,b\xi\}^{\perp},\ \ \ \h_2=\{\xi,a\xi,b^*\xi\}^{\perp},$$
then $\h_0(\xi)=\h_1(\xi)\cup\h_2(\xi)$. Since $\h_j(\xi)$ are vector spaces, this implies that $\h_1(\xi)=\h_0(\xi)$ or else
$\h_2(\xi)=\h_0(\xi)$. In the first case we have $b\xi\in\bc\xi+\bc a\xi$, while in the second case $b^*\xi\in\bc\xi+\bc a\xi$.
Since this holds for all $\xi\in\h$, it follows  that $\h$ is the union of the two sets
$$F_1=\{\xi\in\h:\, b\xi\in\bc\xi+\bc a\xi\}\ \ \ \mbox{and}\ \ \ F_2=\{\xi\in\h:\, b^*\xi\in\bc\xi+\bc a\xi\}.$$
Since $F_1$ and $F_2$ are closed, by Baire's theorem at least one of them has nonempty interior $\stackrel{\circ}{F_i}$. 
We will consider the case when $\stackrel{\circ}{F_1}\ne\emptyset$ and in appropriate places point out the differences with the other case, which is similar. Since $a\notin\bc1$, there exists a vector $\xi\in \stackrel{\circ}{F_1}$
such that $\xi$ and $a\xi$ are linearly independent. (Namely, if $a\xi=\alpha_{\xi}\xi$ for all $\xi\in\stackrel{\circ}{F_1}$,
where $\alpha_{\xi}\in\bc$, then considering this equality for the vectors $\xi$, $\zeta$ and $(1/2)(\xi+\zeta)$ in $\stackrel{\circ}{F_1}$,
where $\xi$ and $\zeta$ are linearly independent,
it follows easily that $\alpha_{\xi}$ must be independent of $\xi$ for $\xi$ in an open subset of $\h$, hence $a$ must be a scalar
multiple of $1$.) Let
$$U=\{\xi\in\stackrel{\circ}{F_1}:\, \xi\ \mbox{and}\ a\xi\ \mbox{are linearly independent}\}.$$
For any $\xi,\eta\in U$ and $z\in\bc$ let $\xi(z)=(1-z)\xi+z\eta$. If $\xi$ and $\eta$ are such that the `segment' $\xi(z)$  ($|z|\leq1$) is contained in $U$, then we have 
\begin{equation}\label{1}b\xi(z)=\alpha(z)a\xi(z)+\beta(z)\xi(z)\end{equation}
for some scalars $\alpha(z),\beta(z)\in\bc$. To see that the coefficients $\alpha$ and $\beta$ are holomorphic (in fact rational) functions of $z$ (for fixed $\xi$ and $\eta$), 
for any fixed $z_0$ with $|z_0|\leq1$ we take the inner product of both sides of (\ref{1})
with the vectors $\xi(z_0)$ and $a\xi(z_0)$ to obtain two equations from which we compute $\alpha(z)$ and $\beta(z)$ by Cramer's rule
(if $z$ is near $z_0$). (From the condition 
$\dis{\xi(z)}{b}=\dis{\xi(z)}{a}$ and (\ref{1}) we also conclude that $|\alpha(z)|=1$, hence $\alpha$ must be constant, which we could use to somewhat simplify the proof in the present case. But this argument is not available in the other case, when $\stackrel{\circ}{F_2}\ne\emptyset$, since we do not know if $\dis{\xi(z)}{b^*}=\dis{\xi(z)}{a}$, hence we will not use it.) Since $\alpha$ and $\beta$ are rational functions, it follows from (\ref{1}) that $b\xi(z)$ is contained in the two-dimensional space $S(z)$ spanned by $\xi(z)$ and $a\xi(z)$ for all $z\in\bc$. (Here we have used that the singular points are isolated and that the set of $z$ for which $b\xi(z)\notin S(z)$ is open.) It is known that this implies, since $b$ is not in $L:=\bc1+\bc a$, that $L$ contains an operator of rank one (see \cite[2.5]{MS} and use that $1\in L$). Thus, replacing $a$ by $a+\lambda1$ for a suitable $\lambda\in\bc$, we may assume that $a$ is of rank $1$. Let $\h_0$ be a $2$-dimensional subspace of $\h$ containing the range of $a$ and the orthogonal complement of the kernel of $a$ and set $\k=\ort{\h_0}$.  Then $b\xi\in\bc\xi$ for all $\xi\in\k$, which easily implies that $b|\k=\lambda1|\k$ for a scalar $\lambda$. Replacing $b$ by $b-\lambda1$, we may assume that $b|\k=0$. Since also $b\xi\in\bc a\xi+\bc\xi\subseteq\h_0$ for all $\xi\in\h_0$, we see that both operators $a$ and $b$ now live on the two dimensional space $\h_0$. (In the case when $\stackrel{\circ}{F_2}\ne\emptyset$, the same arguments reduce the proof to the case when $a$ and $b^*$, hence also $b$, live on the same two dimensional space.) Thus, it only remains to show that for
two $2\times2$ complex matrices $a$ and $b$ the condition $\dxb=\dxa$  implies that $b\in\bc a+\bc1$.
Now instead of proving this here directly we just refer to Lemma \ref{le2}(i) below, where a sharper result is proved.  
\end{proof}

\begin{co}\label{co11}If elements $a,b$ in a C$^*$-algebra $A\subseteq\bh$ satisfy $\dob=\doa$ for all pure states $\omega$ on $A$, then there is a
projection $p$ in the center $Z$ of the weak* closure $R$ of $A$ and central elements $u_1,z_1\in Rp$, $u_2,z_2\in R\ort{p}$, with $u_1,u_2$ unitary,
such that $bp=u_1a+z_1$ and $b\ort{p}=u_2a^*+z_2$ and $a\ort{p}$ is normal.
\end{co}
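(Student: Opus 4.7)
The plan is to reduce to Theorem~\ref{th1} inside each irreducible GNS representation of $A$ and then assemble the resulting local scalar data into central elements of $R$ via the enveloping von Neumann algebra $A^{**}$. Since every pure state $\omega$ of $A$ equals $\omega_{\xi}\circ\pi$ for some irreducible $*$-representation $\pi:A\to\bhp$ and some unit vector $\xi\in\h_{\pi}$, the hypothesis is equivalent to the condition $\dis{\xi}{\pi(a)}=\dis{\xi}{\pi(b)}$ for every irreducible $\pi$ and every unit $\xi\in\h_{\pi}$. Theorem~\ref{th1} applied in $\bhp$ then supplies scalars $\sigma_{\pi},\lambda_{\pi}\in\bc$ with $|\sigma_{\pi}|=1$ such that either \emph{(i)} $\pi(b)=\sigma_{\pi}\pi(a)+\lambda_{\pi}$, or \emph{(ii)} $\pi(a)$ is normal and $\pi(b)=\sigma_{\pi}\pi(a)^*+\lambda_{\pi}$. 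I partition the equivalence classes of irreducible reps into $P_1$ (those with $\pi(a)$ non-normal, so that (i) is forced) and $P_2$ (those with $\pi(a)$ normal, assigned to (ii)).

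To glue, I work in the atomic part of $A^{**}$. Let $z_{\mathrm{at}}\in Z(A^{**})$ be the atomic central projection, so $A^{**}z_{\mathrm{at}}\cong\prod_{\pi}\bhp$, and let $q_1,q_2\in Z(A^{**})$ be the orthogonal central projections corresponding to $P_1,P_2$ (so $q_1+q_2=z_{\mathrm{at}}$). The sequences $(\sigma_{\pi})$ and $(\lambda_{\pi})$, uniformly bounded by $1$ and $\|a\|+\|b\|$ respectively, assemble into central elements $\widetilde{u}_i\in Z(A^{**})q_i$ (unitary on $q_i$) and $\widetilde{z}_i\in Z(A^{**})q_i$ satisfying $bq_1=\widetilde{u}_1a+\widetilde{z}_1$ and $bq_2=\widetilde{u}_2a^*+\widetilde{z}_2$, with $aq_2$ normal. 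The inclusion $A\hookrightarrow R$ extends to a normal surjective $*$-homomorphism $\Phi:A^{**}\to R$ whose kernel is $A^{**}(1-r)$ for some $r\in Z(A^{**})$, and $\Phi$ restricts to a center-preserving $*$-isomorphism $A^{**}r\cong R$. Setting $p:=\Phi(q_1r)\in Z(R)$, $u_i:=\Phi(\widetilde{u}_ir)$, $z_i:=\Phi(\widetilde{z}_ir)$ pushes the atomic identities down: they yield $bp=u_1a+z_1$ on $Rp$ and the case-(ii) identity on $\Phi(q_2r)\leq\ort{p}$, with $a$ normal there.

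The main obstacle is the possible residual $\ort{p}-\Phi(q_2r)$, nonzero precisely when $R$ has non-atomic (type~$\mathrm{II}$ or $\mathrm{III}$) components unseen by pure states of $A$. To close this gap I would use the globally valid identities $[a,b]=0$ and $a^*a-aa^*=b^*b-bb^*$; each holds in every irreducible representation of $A$ by a direct check in cases (i) and (ii), hence in $A$ since irreducible reps jointly separate $A$, and therefore in $R$. Combined with a direct-integral disintegration of the non-atomic part of $R$ into factors, these force $a$ to be normal on the residual and permit central extensions of $u_2, z_2$ across it. The delicate point is that the factor-wise extension must land in $Z(R)$ rather than merely in $R$, and this is where I expect the proof to require the most care; a type-decomposition of $R$, together with the propagation of the commutation and normality-defect identities, is the tool I would use to pin it down.
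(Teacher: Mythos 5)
There is a genuine gap, and it is exactly at the point you flag yourself: the non-atomic part of $R$. Your proposed repair does not work. The two identities you want to propagate, $[a,b]=0$ and $a^*a-aa^*=b^*b-bb^*$, are far too weak to force the conclusion on the residual central summand: in a type $\mathrm{II}_1$ factor any two commuting normal elements satisfy both identities, yet generically neither $b=\sigma a+\lambda$ nor $b=\sigma a^*+\lambda$ holds. So no amount of direct-integral disintegration applied to those identities alone can "force $a$ to be normal on the residual" or produce $u_2,z_2$ there; the full variance hypothesis must be transported to the non-atomic part, and your scheme loses it the moment you pass to the atomic projection of $A^{**}$. The paper closes this gap by a completely different mechanism: since $\omega\mapsto\doa=\omega(a^*a)-|\omega(a)|^2$ is weak* continuous, the hypothesis persists for all weak* limits of pure states of $A$, and by Glimm's Stone--Weierstrass theorem these are precisely the restrictions to $A$ of weak* limits of pure states of $R=\weakc{A}$. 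Hence $\dob=\doa$ holds for \emph{all} pure states of $R$, and one may assume $A=R$ from the outset. Theorem~\ref{th1} is then applied not to irreducible representations but to the Glimm--Halpern central quotients $R(t)=R/(Rt)$, $t$ in the maximal ideal space $\Delta$ of $Z$, which are primitive; this sees every central fibre of $R$, including the type $\mathrm{II}$ and $\mathrm{III}$ parts. The remaining work in the paper is a continuity argument for $t\mapsto\alpha(t),\beta(t)$ over the Stonean space $\Delta$ (via $Z$-module projections and the continuity of $t\mapsto\|x(t)\|$), which is what actually produces central, rather than merely global, elements $u_i,z_i$ --- the very difficulty you correctly anticipate but do not resolve.

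A secondary error: your partition into $P_1$ and $P_2$ is by normality of $\pi(a)$, but Theorem~\ref{th1} only asserts a disjunction. When $\pi(a)$ is normal it may happen that only alternative (i) holds (e.g.\ $\pi(b)=\sigma\pi(a)+\lambda$ with $\pi(a)^*$ not an affine function of $\pi(a)$), so assigning all such $\pi$ to case (ii) leaves the identity $bq_2=\widetilde{u}_2a^*+\widetilde{z}_2$ unjustified. You should partition according to which alternative actually holds (choosing (i) when both do), as the paper does with its sets $F_1$, $F_2$. This part is repairable; the treatment of the non-atomic part is not, without importing something like the Glimm reduction.
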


\begin{proof}Since the condition $\dob=\doa$ persists for all weak* limits of pure states on $A$ and such states are precisely the restrictions of weak*
limits of pure states on $R$ by \cite[Theorem 5]{G}, the proof immediately reduces to the case $A=R$. Let  $Z$ be the center of $R$, $\Delta$ the maximal ideal 
space of $Z$,  for each $t\in \Delta$ let $Rt$ be the closed ideal of $R$ generated
by $t$ and set $R(t):=R/(Rt)$. For any $a\in R$ let $a(t)$ denotes the coset of $a$ in $R(t)$.  Since each pure state on $R(t)$ can be lifted to a pure 
state on $R$, we have $D_{\omega}(b(t))=D_{\omega}(a(t))$ for each pure state $\omega$ on $R(t)$ and each $t\in\Delta$. Since $R(t)$ is a primitive C$^*$-algebra 
by \cite{Halp}, it follows from Theorem \ref{th1} that there exist scalars $\alpha(t),\beta(t)$, with $|\alpha(t)|=1$, such that 

\begin{equation}\label{11} b(t)=\alpha(t)a(t)+\beta(t)1\end{equation}
or
\begin{equation}\label{12} b(t)=\alpha(t)a(t)^*+\beta(t)1\ \mbox{and}\ a\ \mbox{is normal}.\end{equation}
Let $F_1$ be the set of all $t\in\Delta$ for which (\ref{11}) holds, $F_2$ the set of all those $t$ for which (\ref{12}) holds and $U$ the set of
all $t$ such that $a(t)$ is not a scalar. Since for each $x\in R$ the function $t\mapsto\|x(t)\|$ is continuous on $\Delta$ 
by \cite{G}, it is easy to see that $U$ is open and $F_1$, $F_2$ are closed. 

To show that the coefficients $\alpha$ and $\beta$
in (\ref{11}) and (\ref{12}) are continuous functions of $t$ on $U$, let $t\in U$ be fixed, note that the center
of $R(t)$ is $\bc1$ and that $R(t)$ is generated by projections, so there is a projection $p_t\in R(t)$ such that
$(1-p_t)a(t)p_t\ne0$. We may lift $1-p_t$ and $p_t$ to positive elements $x,y$ in $R$ with $xy=0$ \cite[4.6.20]{KR}. Then from 
(\ref{11}) 
\begin{equation}\label{111}x(s)b(s)y(s)=\alpha(s)x(s)a(s)y(s)\ (\forall s\in\Delta),\end{equation}
and $\|(xay)(s)\|\ne0$ for $s$ in a neighborhood
of $t$ by continuity. Now let $c\in Z$ be the element whose Gelfand transform is the function $s\mapsto\|x(s)a(s)y(s)\|$
and let $\phi:R\to Z$ be a bounded $Z$-module map such that $\phi(xay)=c$. (Such a map may be obtained simply as the
completely bounded $Z$-module extension to $R$ of the map $Z(xay)\to Z$, $z\mapsto z(xay)$, since $Z$ is injective
\cite{BLM}.) Since $\phi$ is a $Z$ module map, $\phi$ is just a collection of maps $\phi_s:R(s)\to Z(s)=\bc$, hence
from (\ref{111}) we obtain $\alpha(s)c(s)=(\phi(xby))(s)$. Since $c(t)=\|x(t)a(t)y(t)\|\ne0$, it follows that 
$\alpha$ is continuous in a neighborhood of $t$, hence continuous on $U$. Then, denoting by $q_0$ the projection corresponding to a clopen neighborhood
$U_0\subseteq U$ of $t$, we have from (\ref{11}) that $\beta(t)q_0(t)=e(t)$, where $e=(bq_0-\alpha aq_0)\in Rq_0$, hence 
$\beta|U_0$ represents a central element of $Rq_0$ and is therefore continuous.

Since $\Delta$ (hence also $\weakc{U}$) is a Stonean space and $\alpha$ (hence also $\beta$) are bounded continuous functions, they have
continuous extensions to $\weakc{U}$ (see \cite[p. 324]{KR}). If $q\in Z$ is the projection that corresponds to $\weakc{U}$, then $a\ort{q}$ is a 
scalar
in $R\ort{q}$, and it follows easily that $b\ort{q}$ must also be a scalar. So we have only to consider the situation in $Rq$, which means that
we may assume that $\weakc{U}=\Delta$, hence that $\alpha$ and $\beta$ are defined and continuous throughout $\Delta$. The interior $F:=\stackrel{\circ}{F}_1$
of $F_1$ is a clopen subset such that (\ref{11}) holds for $t\in F$. Since the complement $F^c\ (=\weakc{F_1^c}$) is contained in $F_2$, (\ref{12})
holds if $t\in F^c$. Finally, to conclude the proof, just let $p\in Z$ be the projection that corresponds to $F$, and let $u_1, z_1\in Zp$, $u_2,z_2\in Z\ort{p}$ be elements that corresponds to functions
$\alpha|F$, $\beta|F$, $\alpha|F^c$ and $\beta|F^c$ (respectively).
\end{proof}

We note that the converse of Corollary \ref{co11} also holds, the proof follows easily from the well-known fact 
\cite[p. 268]{KR} that if $\omega$ is a pure state on
a C$^*$-algebra $R$ then $\omega(xz)=\omega(x)\omega(a)$ for all $x\in R$ and all $z$ in the center of $R$.

\section{The inequality $\dis{\xi}{b}\leq\dxa$}

\begin{lemma}\label{le1}For any two operators $a,b\in\bh$ and any state $\omega$ on $\bh$ the following estimate holds: 
$$|D_{\omega}(b)-D_{\omega}(a)|\leq 2\|b-a\|(\|a\|+\|b\|).$$
\end{lemma}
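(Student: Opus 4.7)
The plan is to view $D_\omega$ as the diagonal of the positive semidefinite sesquilinear form
$$B(x,y) := \omega(x^*y) - \overline{\omega(x)}\omega(y),\quad x,y\in\bh,$$
so that $D_\omega(a) = B(a,a)$. Positive semidefiniteness is immediate: $B(x,x) = \omega(x^*x) - |\omega(x)|^2 \geq 0$ by the Cauchy--Schwarz inequality for the state $\omega$ applied to the pair $(x,1)$. Sesquilinearity follows directly from the definition.

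Next I would use the telescoping identity
$$B(b,b) - B(a,a) = B(b-a,\,b) + B(a,\,b-a).$$
Because $B$ is a positive semidefinite sesquilinear form, it satisfies the Cauchy--Schwarz inequality $|B(x,y)|\le B(x,x)^{1/2}B(y,y)^{1/2}$. Combined with the trivial estimate $B(x,x) = D_\omega(x) \le \omega(x^*x) \le \|x\|^2$, applying Cauchy--Schwarz to each of the two terms above yields
$$|D_\omega(b) - D_\omega(a)| \le \|b-a\|\bigl(\|a\|+\|b\|\bigr),$$
which is actually sharper than the claimed bound, and in particular implies it.

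There is no real obstacle: the key observation is simply to recognize the variance as the diagonal of a positive semidefinite sesquilinear form, after which Cauchy--Schwarz and polarization do all the work. The extra factor of $2$ in the stated inequality presumably reflects a more pedestrian proof: setting $c=b-a$ and expanding $\omega((a+c)^*(a+c))$ and $|\omega(a)+\omega(c)|^2$, then bounding each term via $|\omega(\cdot)|\le\|\cdot\|$, produces $|D_\omega(b)-D_\omega(a)|\le 4\|a\|\,\|c\| + 2\|c\|^2$, from which the stated bound follows after exchanging the roles of $a$ and $b$ and averaging.
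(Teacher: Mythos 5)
Your main argument is correct, and it takes a genuinely different (and slightly sharper) route than the paper. The paper splits $D_{\omega}(b)-D_{\omega}(a)$ into the two pieces $\omega(b^*b-a^*a)$ and $|\omega(b)|^2-|\omega(a)|^2$, bounds each separately by $\|b-a\|(\|a\|+\|b\|)$ using $\|b^*b-a^*a\|=\|(b^*-a^*)b+a^*(b-a)\|$ and the factorization of a difference of squares, and adds the two bounds --- hence the factor $2$. You instead recognize $D_{\omega}$ as the diagonal of the positive semidefinite covariance form $B(x,y)=\omega(x^*y)-\overline{\omega(x)}\omega(y)$, telescope $B(b,b)-B(a,a)=B(b-a,b)+B(a,b-a)$, and apply the Cauchy--Schwarz inequality for $B$ together with $B(x,x)\leq\omega(x^*x)\leq\|x\|^2$. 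This is a clean, conceptual argument and yields the constant $1$ in place of $2$, which of course implies the stated lemma; since the lemma is only used for crude perturbation estimates later in the paper, the improved constant is harmless. One small caveat: your closing speculation about the ``pedestrian'' proof does not quite reproduce the paper's bound --- averaging $4\|a\|\,\|c\|+2\|c\|^2$ with $4\|b\|\,\|c\|+2\|c\|^2$ leaves the extra term $2\|c\|^2$, which is not dominated by $2\|c\|(\|a\|+\|b\|)$ in general --- but this aside plays no role in your actual proof, which stands on its own.
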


\begin{proof}Since $|\omega(b^*b-a^*a)|\leq\|b^*b-a^*a\|=\|(b^*-a^*)b+a^*(b-a)\|\leq\|b-a\|(\|b\|+\|a\|)$ and $\left||\omega(b)|^2-|\omega(a)|^2\right|=
(|\omega(b)|+|\omega(a)|)\left| |\omega(b)|-|\omega(a)|\right|\leq(\|a\|+\|b\|)\|b-a\|$, we have
\begin{align*}|D_{\omega}(b)-D_{\omega}(a)|=|\omega(b^*b-a^*a)-(|\omega(b)|^2-|\omega(a)|^2)|\\
\leq|\omega(b^*b-a^*a)|+\left|(|\omega(b)|^2-|\omega(a)|^2)|\right|\leq2\|b-a\|(\|a\|+\|b\|).
\end{align*}
\end{proof}

\begin{lemma}\label{le2}Let $a,b\in{\mathbb M}_2(\bc)$ ($2\times 2$ complex matrices), $0<\varepsilon<1/2$, and let $\alpha_i$ and $\beta_i$ ($i=1,2$) 
be the eigenvalues of $a$ and $b$ (respectively).

(i) If $\dxb\leq\dxa$ for all unit vectors $\xi\in\bc^2$, then $b=\theta a+\tau$ for some scalars $\theta,\tau\in\bc$ with $|\theta|\leq1$.

(ii) If $\dxb\leq\dxa+\varepsilon^8$ for all unit vectors $\xi\in\bc^2$, then 
$$|\beta_2-\beta_1|\leq|\alpha_2-\alpha_1|+2\varepsilon(\|a\|+2\|b\|+1).$$
\end{lemma}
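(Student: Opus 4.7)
The key observation is that variance admits a clean expression in projective coordinates on $\bc^2$. Parametrize unit vectors as $\xi_z=(1,z)^T/\sqrt{1+|z|^2}$ with $z\in\bc\cup\{\infty\}$, and let $\eta_z=(-\bar z,1)^T/\sqrt{1+|z|^2}$ be perpendicular to $\xi_z$. A direct calculation yields
$$\dis{\xi_z}{a}=|\langle a\xi_z,\eta_z\rangle|^2=\frac{|p_a(z)|^2}{(1+|z|^2)^2},\quad p_a(z):=-a_{12}z^2+(a_{22}-a_{11})z+a_{21}.$$
The discriminant of $p_a$ is $(a_{11}-a_{22})^2+4a_{12}a_{21}=(\alpha_1-\alpha_2)^2$, the discriminant of the characteristic polynomial of $a$. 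For (i) the hypothesis becomes $|p_b(z)|\leq|p_a(z)|$ on $\bc\cup\{\infty\}$; excluding the trivial case in which $a$ is a scalar, $p_b/p_a$ is a rational function on the Riemann sphere of modulus at most $1$, hence a constant $\theta$ with $|\theta|\leq 1$, and matching coefficients in $p_b=\theta p_a$ gives $b=\theta a+\tau\cdot 1$ for some $\tau\in\bc$.

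For (ii), work in an orthonormal basis in which $a$ is upper triangular, so that $a_{21}=0$, $a_{12}=c$ with $|c|\leq\|a\|$, and $(a_{11},a_{22})=(\alpha_1,\alpha_2)$. The hypothesis becomes $|p_b(z)|\leq|p_a(z)|+\varepsilon^4(1+|z|^2)$. Evaluating at $z=0$ and $z=\infty$ (i.e.\ at $\xi=e_1,e_2$) immediately gives
$$|b_{21}|\leq\varepsilon^4\quad\text{and}\quad|b_{12}|\leq|c|+\varepsilon^4.$$
To estimate $|b_{11}-b_{22}|$, use the identity $p_b(r)-p_b(-r)=2(b_{22}-b_{11})r$ together with $p_a(\pm r)=-cr^2\pm(\alpha_2-\alpha_1)r$. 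Applying the hypothesis at $z=\pm r$ and the triangle inequality gives
$$|b_{22}-b_{11}|\leq|c|r+|\alpha_1-\alpha_2|+\varepsilon^4(r+r^{-1}),$$
and the choice $r=\varepsilon^2$ yields $|b_{11}-b_{22}|\leq|\alpha_1-\alpha_2|+(|c|+1)\varepsilon^2+\varepsilon^6$.

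Finally, $(\beta_1-\beta_2)^2=(b_{11}-b_{22})^2+4b_{12}b_{21}$ together with the two bounds on $|b_{12}|,|b_{21}|$ gives
$$|\beta_1-\beta_2|\leq\sqrt{|b_{11}-b_{22}|^2+4(|c|+\varepsilon^4)\varepsilon^4}\leq|b_{11}-b_{22}|+2\varepsilon^2\sqrt{|c|+\varepsilon^4}.$$
Combining with the previous estimate, using $|c|\leq\|a\|$ and $\varepsilon<1/2$ (so $\varepsilon^2<\varepsilon/2$ and $2\sqrt{\|a\|+1}\leq\|a\|+2$), a short calculation bounds the right-hand side by $|\alpha_1-\alpha_2|+(2\|a\|+4)\varepsilon^2$, which is strictly dominated by the stated $|\alpha_1-\alpha_2|+2\varepsilon(\|a\|+2\|b\|+1)$ since $(2\|a\|+4)\varepsilon^2<(\|a\|+2)\varepsilon\leq 2\varepsilon(\|a\|+2\|b\|+1)$. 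The main technical point is the optimal choice $r=\varepsilon^2$, balancing $|c|r$ against $\varepsilon^4/r$; everything else is routine bookkeeping with the triangle inequality.
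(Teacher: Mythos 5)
Your proof is correct, and it takes a genuinely different route from the paper's. The paper proves (i) by translating so that $a$ and $b$ share the eigenvalue $0$ with a common eigenvector, computing $\dxa$ explicitly in a Schur basis, and recasting $\dxb\leq\dxa$ as positive semidefiniteness of a $2\times2$ matrix $M$ whose determinant is $-|\alpha_1\delta-\beta_1\gamma|^2$, forcing proportionality; for (ii) it perturbs $b$ to an upper-triangular $b_0$ with $\|b-b_0\|\leq\varepsilon^4$, invokes Lemma \ref{le1} to transfer the variance inequality to $b_0$, and then reuses the computation from (i) with the substitution $t=\varepsilon^2$. You instead encode the whole variance function in the quadratic $p_a(z)$ via the identity $\dis{\xi_z}{a}=|p_a(z)|^2/(1+|z|^2)^2$, which turns (i) into a Liouville-type statement ($p_b/p_a$ bounded on the sphere, hence constant) and (ii) into direct coefficient estimates on $p_b$; your choice $r=\varepsilon^2$ plays exactly the role of the paper's $t=\varepsilon^2$. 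Your version of (ii) is slightly sharper: because you never translate $a$ or $b$ (only pass to a Schur basis for $a$, where $|a_{12}|\leq\|a\|$), you avoid both the factor-of-$2$ norm inflation and the perturbation step through Lemma \ref{le1}, and your final error term $(2\|a\|+4)\varepsilon^2$ does not involve $\|b\|$ at all, strictly dominating the stated bound. The one point worth spelling out in a final write-up is the order-of-vanishing argument at the zeros of $p_a$ (including $z=\infty$) needed to see that $p_b/p_a$ has only removable singularities before applying Liouville; as written this is implicit in ``rational of modulus at most $1$, hence constant,'' but it is the only place where the argument could be accused of hand-waving.
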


\begin{proof} (i) Since $\dxa=\dis{\xi}{a-\lambda1}$ for all $\lambda\in\bc$, we may assume that one of the eigenvalues of $a$ is $0$,
say $a\xi_2=0$ for a unit vector $\xi_2\in\bc^2$. Then from $0\leq\dis{\xi_2}{b}\leq\dis{\xi_2}{a}=0$ we see that $\xi_2$ is also an
eigenvector for $b$, hence (replacing $b$ by $b-\lambda1$ for a $\lambda\in\bc$) we may assume that $b\xi_2=0$. So, choosing a suitable
orthonormal basis $\{\xi_1,\xi_2\}$ of $\bc^2$, we may assume that $a$ and $b$ are of the form
$$a=\left[\begin{array}{ll}
\alpha_1&0\\
\gamma&0
\end{array}\right],\ \ \ \ b=\left[\begin{array}{ll}
\beta_1&0\\
\delta&0\end{array}\right].$$
Now we compute for any unit vector $\xi=(\lambda,\mu)\in\bc^2$ (using $|\lambda|^2+|\mu|^2=1$) that
\begin{align*}\dxa=\|a\xi\|^2-|\inner{a\xi}{\xi}|^2=(|\alpha_1|^2+|\gamma|^2)|\lambda|^2-\left|\alpha_1|\lambda|^2+\gamma\lambda\overline{\mu}\right|^2\\
=|\lambda|^2[|\alpha_1|^2|\mu|^2+|\gamma|^2|\lambda|^2-2\rea(\alpha_1\overline{\gamma}\overline{\lambda}\mu)].
\end{align*}
Using this and a similar expression for $\dxb$, the condition $\dxb\leq\dxa$ can be written as
\begin{equation}\label{20}(|\alpha_1|^2-|\beta_1|^2)|\mu|^2+(|\gamma|^2-|\delta|^2)|\lambda|^2-
2\rea((\alpha_1\overline{\gamma}-\beta_1\overline{\delta})
\overline{\lambda}\mu)\geq0,
\end{equation}
which means that the matrix
$$M=\left[\begin{array}{ll}
|\alpha_1|^2-|\beta_1|^2&\beta_1\overline{\delta}-\alpha_1\overline{\gamma}\\
\overline{\beta}_1\delta-\overline{\alpha}_1\gamma&|\gamma|^2-|\delta|^2\end{array}\right]$$
is nonnegative. This is equivalent to the conditions 
$$|\beta_1|\leq|\alpha_1|,\ \ |\delta|\leq|\gamma|\ \ \mbox{and}\ \ \det M\geq0.$$
Since $\det M=-|\alpha_1\delta-\beta_1\gamma|^2$, the condition $\det M\geq0$ means that $\alpha_1\delta=\beta_1\gamma$. If $\alpha_1\ne0$,
it follows that $b$ is of the form
$$b=\left[\begin{array}{ll}
\beta_1&0\\
\frac{\beta_1}{\alpha_1}\gamma&0
\end{array}\right]=\frac{\beta_1}{\alpha_1}a=\theta a, \ \mbox{where}\ \theta:=\frac{\beta_1}{\alpha_1},\ \mbox{hence}\ |\theta|\leq1.$$
If $\alpha_1=0$, then $\beta_1=0$ (since $|\beta_1|\leq|\alpha_1|$), hence again $b=\theta a$, where $\theta=\delta/\gamma$ if $\gamma\ne0 $.

(ii) As above, replacing $a$ and $b$ by $a-\lambda1$ and $b-\mu1$, where $\lambda$ and $\mu$ are eigenvalues of $a$ and 
$b$,  we may assume that
$a$ and $b$ are of the form
$$a=\left[\begin{array}{ll}
\alpha_1&0\\
\gamma&0
\end{array}\right],\ \ \ \ b=\left[\begin{array}{ll}
\beta_1&\delta_2\\
\delta_1&0\end{array}\right].$$
The norms of the new $a$ and $b$ are at most two times greater than the norms of original ones, 
which will be taken into account in the final estimate.
If $\xi=(0,1)$, then $\dxb=|\delta_2|^2$ and $\dxa=0$, hence the condition $\dxb\leq\dxa+\varepsilon^8$ shows that $|\delta_2|\leq\varepsilon^4$.
Thus, denoting by $b_0$ the matrix
$$b_0=\left[\begin{array}{ll}
\beta_1&0\\
\delta_1&0
\end{array}\right],$$
we have that $\|b-b_0\|\leq\varepsilon^4$, hence by Lemma \ref{le1}
$$|\dxb-\dis{\xi}{b_0}|\leq2\varepsilon^4(\|b_0\|+\|b\|)\leq4\|b\|\varepsilon^4\ \mbox{for all unit vectors}\ \xi\in\bc^2.$$
It follows that
$$\dis{\xi}{b_0}\leq\dxa+4\|b\|\varepsilon^4+\varepsilon^8\leq\dxa+\varepsilon^4(4\|b\|+1).$$
The same calculation that led to (\ref{20}) shows now that
$$|\lambda|^2[(|\alpha_1|^2-|\beta_1|^2)|\mu|^2+(|\gamma|^2-|\delta_1|^2)|\lambda|^2-
2\rea((\alpha_1\overline{\gamma}-\beta_1\overline{\delta}_1)
\overline{\lambda}\mu)]\geq-\varepsilon^4(4\|b\|+1)$$
for all $\lambda,\mu\in\bc$ with $|\lambda|^2+|\mu|^2=1$. We may choose the arguments of $\lambda$ and $\mu$ so that 
$(\alpha_1\overline{\gamma}-\beta_1\overline{\delta}_1)\lambda\overline{\mu}$ is positive, hence the above inequality implies that
$$t[(|\alpha_1|^2-|\beta_1|^2)(1-t)+(|\gamma|^2-|\delta_1|^2)t-2|\alpha_1\overline{\gamma}-\beta_1\overline{\delta}_1|\sqrt{t(1-t)}]
\geq-\varepsilon^4(4\|b\|+1)$$
for all $t\in[0,1]$. Setting $t=\varepsilon^2$, it follows (since $|\gamma|\leq\|a\|$ and $|\delta_1|\leq\|b\|$) that
$$(|\alpha_1|^2-|\beta_1|^2)(1-\varepsilon^2)+(\|a\|^2+\|b\|^2)\varepsilon^2\geq-\varepsilon^2(4\|b\|+1),$$
hence $|\alpha_1|^2-|\beta_1|^2\geq-\varepsilon^2(\|a\|^2+\|b\|^2+4\|b\|+1)$, so
$$|\beta_1|\leq|\alpha_1|+\varepsilon(\|a\|+2\|b\|+1).$$
Taking into account that $\alpha_2$ and $\beta_2$ were initially reduced to $0$ (by which the norms of $a$ and $b$ may have increased at most
by a factor $2$), this proves (ii).
\end{proof}

The approximate point spectrum of an operator $a$ will be denoted by $\sap{a}$.

\begin{de}\label{de1} If $a,b\in\bh$ are such that $\dxb\leq\dxa$ for all $\xi\in\h$, then we can define a function
$f:\sap{a}\to\sap{b}$ as follows. Given $\alpha\in\sap{a}$, let $(\xi_n)$ be a sequence of unit vectors
in $\h$ such that $\lim\|(a-\alpha1)\xi_n\|=0$. Then from the condition $\dis{\xi_n}{b}\leq\dis{\xi_n}{a}$
we conclude that $\lim\|(b-\lambda_n1)\xi_n\|=0$, where $\lambda_n=\inner{b\xi_n}{\xi_n}$. We will show that the sequence
$(\lambda_n)$ converges, so we define
$$f(\alpha)=\lim\lambda_n.$$
\end{de}

\begin{pr}\label{pr2}The function $f$ is well-defined and Lipschitz: $|f(\beta)-f(\alpha)|\leq|\beta-\alpha|$ for all
$\alpha,\beta\in\sap{a}$.
\end{pr}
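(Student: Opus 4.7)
My plan is to reduce both well-definedness and the Lipschitz inequality to a single estimate. Given sequences $(\xi_n), (\eta_n)$ of unit vectors with $\|(a-\alpha 1)\xi_n\|\to 0$ and $\|(a-\beta 1)\eta_n\|\to 0$, set $\lambda_n := \inner{b\xi_n}{\xi_n}$, $\mu_n := \inner{b\eta_n}{\eta_n}$, $\tau_n := \inner{\eta_n}{\xi_n}$. Since $\|b\xi_n-\lambda_n\xi_n\|^2 = \dis{\xi_n}{b} \leq \dis{\xi_n}{a} \to 0$ (and similarly for $\eta_n$), and $(\lambda_n), (\mu_n), (\tau_n)$ are bounded, I pass to a subsequence with $\lambda_n\to\lambda$, $\mu_n\to\mu$, $\tau_n\to\tau\in\overline{\bd}$. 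The goal is to prove $|\mu-\lambda|\leq|\beta-\alpha|$. Specialized to $\alpha=\beta$ with two arbitrary subsequences of a single approximating sequence, this forces every subsubsequential limit of $(\lambda_n)$ to equal any other, so $(\lambda_n)$ converges to a limit independent of the chosen sequence; the general case is the Lipschitz inequality.

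Multiplying each $\eta_n$ by a single unimodular scalar $e^{i\theta}$ (which preserves $\mu_n$, $\|(a-\beta 1)\eta_n\|$, and $|\tau_n|$) lets me assume $\tau\in[0,1]$. If $\tau=1$ then $\|\eta_n-\xi_n\|^2 = 2(1-\mathrm{Re}\,\tau_n)\to 0$; combined with $a\xi_n\to\alpha\xi_n$ and $a\eta_n\to\beta\eta_n$ this forces $(\beta-\alpha)\xi_n\to 0$, hence $\alpha=\beta$, and directly $\mu_n-\lambda_n\to 0$, hence $\lambda=\mu$; so $|\mu-\lambda|=0=|\beta-\alpha|$.

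For $\tau\in[0,1)$ I test the inequality $\dis{\zeta}{b}\leq\dis{\zeta}{a}$ at $\zeta_n(t):=\xi_n+t\eta_n$ for real $t$, using that $\dis{\zeta}{a}\|\zeta\|^2 = \min_{\gamma\in\bc}\|(a-\gamma 1)\zeta\|^2$. Since $(a-\alpha 1)\xi_n\to 0$ and $(a-\alpha 1)\eta_n=(\beta-\alpha)\eta_n+o(1)$, straightforward expansion gives $\|(a-\alpha 1)\zeta_n(t)\|^2\to t^2|\beta-\alpha|^2$, so $\dis{\zeta_n(t)}{a}\|\zeta_n(t)\|^2\leq t^2|\beta-\alpha|^2+o(1)$. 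On the $b$-side, with $\rho_n(t) := \inner{b\zeta_n(t)}{\zeta_n(t)}/\|\zeta_n(t)\|^2$ the minimizer, one checks $\rho_n(t)\to\rho_\infty(t) := [\lambda+t^2\mu+t\tau(\lambda+\mu)]/(1+t^2+2t\tau)$, which yields $\lambda-\rho_\infty = t(\lambda-\mu)(t+\tau)/\|\zeta\|^2$ and $\mu-\rho_\infty = (\mu-\lambda)(1+t\tau)/\|\zeta\|^2$. Expanding $\|(b-\rho_n(t))\zeta_n(t)\|^2 = \|(\lambda_n-\rho_n(t))\xi_n+t(\mu_n-\rho_n(t))\eta_n\|^2 + o(1)$ and collecting the three terms by the algebraic identity $(t+\tau)^2+(1+t\tau)^2-2\tau(1+t\tau)(t+\tau) = (1-\tau^2)(1+t^2+2t\tau)$ gives
$$\dis{\zeta_n(t)}{b}\|\zeta_n(t)\|^2 \to \frac{t^2(1-\tau^2)|\mu-\lambda|^2}{1+t^2+2t\tau}.$$

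Combining the two estimates and letting $n\to\infty$ yields $(1-\tau^2)|\mu-\lambda|^2 \leq (1+t^2+2t\tau)|\beta-\alpha|^2$ for every real $t$. Since $\min_{t\in\br}(1+t^2+2t\tau) = 1-\tau^2$ (attained at $t=-\tau$, where $\|\zeta_n(t)\|^2\to 1-\tau^2>0$), I conclude $|\mu-\lambda|\leq|\beta-\alpha|$, completing the proof. The main obstacle will be the algebraic simplification producing the factor $1-\tau^2$: after writing out $\|(\lambda-\rho_\infty)\xi_n + t(\mu-\rho_\infty)\eta_n\|^2$ one must verify that the two diagonal terms $|t+\tau|^2$ and $|1+t\tau|^2$, together with the cross term $-2\tau(1+t\tau)(t+\tau)$ coming from $\inner{\eta_n}{\xi_n}\to\tau$, combine exactly into $(1-\tau^2)(1+t^2+2t\tau)$. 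Once this identity is in hand, the remaining steps are routine normalizations, a compactness argument, and the one-variable minimization over $t$.
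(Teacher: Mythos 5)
Your proof is correct, but it takes a genuinely different route from the paper's. Both arguments test the hypothesis on the two‑dimensional span of the approximate eigenvectors, but the paper does this by compressing $a$ and $b$ to that span, approximating the compressions within $\sqrt{2}\varepsilon$ by the model operators $c$ ($c\xi=\alpha\xi$, $c\eta=\beta\eta$) and $d$ ($d\xi=\lambda\xi$, $d\eta=\mu\eta$), and then invoking the perturbation estimate of Lemma \ref{le1} together with the quantitative $2\times2$ eigenvalue estimate of Lemma \ref{le2}(ii), which yields $|\mu-\lambda|\leq|\beta-\alpha|+\kappa\varepsilon$. You instead pass to subsequences along which $\lambda_n$, $\mu_n$ and the overlap $\tau_n=\inner{\eta_n}{\xi_n}$ converge, and compute the exact limits of $\dis{\zeta_n(t)}{b}\|\zeta_n(t)\|^2$ and (an upper bound for) $\dis{\zeta_n(t)}{a}\|\zeta_n(t)\|^2$ along the line $\zeta_n(t)=\xi_n+t\eta_n$; the identity $(t+\tau)^2+(1+t\tau)^2-2\tau(1+t\tau)(t+\tau)=(1-\tau^2)(1+t^2+2t\tau)$ does check out, and the minimization at $t=-\tau$ gives $|\mu-\lambda|\leq|\beta-\alpha|$ with no error term. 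What your version buys is self‑containedness: it bypasses Lemma \ref{le2}(ii) entirely and with it the paper's $\varepsilon^8$ bookkeeping, at the cost of the algebraic computation; the paper's version isolates the finite‑dimensional content in a lemma that is stated and used independently. Two small points to tidy up: at $\tau=0$ the minimizing value $t=-\tau=0$ was excluded when you divided by $t^2$, but the infimum of $1+t^2$ over $t\neq0$ is still $1=1-\tau^2$, so nothing is lost; and since $f$ is declared as a map into $\sap{b}$, well‑definedness should include the one extra line that $\lambda_n\to\lambda$ and $\|(b-\lambda_n1)\xi_n\|\to0$ imply $\|(b-\lambda 1)\xi_n\|\to0$, so that $f(\alpha)\in\sap{b}$.
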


\begin{proof}To slightly simplify the computation, we assume that $a$ and $b$ are contractions; for general $a$ and $b$ the proof is essentially the same. Given $\varepsilon>0$, choose unit vectors $\xi,\eta\in\h$ such that
$$\|(a-\alpha1)\xi\|<\varepsilon\ \ \ \mbox{and}\ \ \ \|(a-\beta1)\eta\|<\varepsilon.$$
Let $p$ be the projection onto the span of $\{\xi,\eta\}$ and let $c$ be the operator on $p\h$ defined by
$c\xi=\alpha\xi$ and $c\eta=\beta\eta$. Then
\begin{equation}\label{31}\|a|_{p\h}-c\|^2\leq\|(a-c)\xi\|^2+\|(a-c)\eta\|^2<2\varepsilon^2.\end{equation}
Let $\lambda=\inner{b\xi}{\xi}$, $\mu=\inner{b\eta}{\eta}$ and $d$ the operator on $p \h$ defined by
$d\xi=\lambda\xi$ and $d\eta=\mu\eta$. Then, using the conditions $\dxb\leq\dxa$  and $D_{\eta}(b)\leq D_{\eta}(a)$, 
we have
\begin{align}\label{32}\|b|_{p\h}-d\|^2\leq\|(b-\lambda1)\xi\|^2+\|(b-\mu1)\eta\|^2
\leq\|(a-\alpha1)\xi\|^2+\|(a-\beta1)\eta\|^2<2\varepsilon^2.\end{align}
Now by Lemma \ref{le1} and Remark \ref{re0}(i)
and since $\|d\|\leq\|b\|$, $\|c\|\leq\|a\|$ we infer from (\ref{31}) and (\ref{32}) that
$$\dis{\xi}{d}\leq\dxb+4\|d-b|p\h\|\|b\|<\dxb+4\varepsilon\sqrt{2}\ {\rm and}\ 
\dis{\xi}{c}>\dxa-4\varepsilon\sqrt{2},$$
hence (since $\dxb\leq\dxa$)
$$\dis{\xi}{d}\leq\dis{\xi}{c}+8\varepsilon\sqrt{2}\ \ \mbox{for all}\ \xi\in\h\ \mbox{with}\ \|\xi\|=1.$$
By Lemma \ref{le2} (ii) we now conclude that
\begin{equation}\label{33}|\mu-\lambda|\leq|\beta-\alpha|+\kappa\varepsilon^{\frac{1}{8}},\end{equation}
where $\kappa$ is a constant.

If $(\xi_n)$ and $(\eta_n)$ are two sequences of unit vectors in $\h$ such that $\lim\|(a-\alpha1)\xi_n\|=0$
and $\lim\|(a-\beta1)\eta_n\|=0$, we infer from (\ref{33}) (since $\varepsilon$ can be taken to tend to $0$
as $n\to\infty$) that
\begin{equation}\label{34}\limsup|\mu_n-\lambda_n|\leq|\beta-\alpha|.\end{equation}
Further, if $\beta=\alpha$ and we put in (\ref{33}) $\lambda_n=\inner{b\xi_n}{\xi_n}$ instead of $\lambda$ and 
$\lambda_m=\inner{b\xi_m}{\xi_m}$ instead of $\mu$,
we conclude that  $(\lambda_n)$ is a Cauchy sequence, hence it converges to a point $\lambda\in\bc$. From
$\lim\|(b-\lambda_n1)\xi_n\|=0$ it follows now that $\lim\|(b-\lambda1)\xi_n\|=0$, hence $\lambda\in\sap{b}$. Similarly
the sequence $(\mu_n)=(\inner{b\eta_n}{\eta_n})$ converges to some $\mu$ and (\ref{34}) implies that
$$|\mu-\lambda|\leq|\beta-\alpha|.$$
This shows that $f$ is a well-defined Lipschitz function.
\end{proof}

\begin{theorem}\label{th3}Let $a,b\in\bh$. If $a$ is normal, then there exists a constant
$\kappa$ such that $\dxb\leq\kappa\dxa$ for
all $\xi\in\h$ if and only if  $b=f(a)$ for a Lipschitz function $f$ on $\sigma(a)$.
In this case $\dob\leq\kappa\doa$ for all states $\omega$.
\end{theorem}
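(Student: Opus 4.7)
My plan is to handle the easy $(\Leftarrow)$ direction via the spectral theorem, and then to build $(\Rightarrow)$ on Proposition \ref{pr2} by upgrading a local version of the variance inequality to the global identity $b=f(a)$.

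For $(\Leftarrow)$, I would use the spectral theorem for the normal operator $a$: for a unit vector $\xi$ with scalar spectral measure $\mu_\xi$ on $\sigma(a)$ one has the variance identity
\[
D_\xi(a)=\frac{1}{2}\iint|\lambda-\mu|^2\,d\mu_\xi(\lambda)\,d\mu_\xi(\mu),
\]
and the analogous formula for $f(a)$. If $f$ is $L$-Lipschitz on $\sigma(a)$, the pointwise inequality $|f(\lambda)-f(\mu)|^2\le L^2|\lambda-\mu|^2$ integrates to give $D_\xi(f(a))\le L^2D_\xi(a)$, so $\kappa=L^2$ works. For an arbitrary state $\omega$ the same computation runs in the GNS representation $\pi_\omega$ at the cyclic vector $\xi_\omega$, using that $\pi_\omega(a)$ is normal with spectrum contained in $\sigma(a)$ and $\pi_\omega(f(a))=f(\pi_\omega(a))$; this also yields the final ``In this case\dots'' assertion of the theorem.

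For $(\Rightarrow)$, I would rescale $b$ to $\kappa^{-1/2}b$ and assume $\kappa=1$. Since $\sap{a}=\sigma(a)$ for normal $a$, Proposition \ref{pr2} produces a $1$-Lipschitz function $f\colon\sigma(a)\to\bc$, which I extend (by McShane) to a Lipschitz function on $\bc$ and apply to $a$ via the continuous functional calculus. The key local estimate is: for any Borel $\Delta\subseteq\sigma(a)$ with $\mathrm{diam}(\Delta)\le\varepsilon$ and any $\alpha\in\overline{\Delta}$,
\[
\|(b-f(\alpha))E(\Delta)\|\le C\varepsilon
\]
for a constant $C$ independent of $\Delta$. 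Indeed, for a unit $\xi\in E(\Delta)\h$ the bound $\|(a-\alpha)\xi\|\le\varepsilon$ forces $D_\xi(a)\le\varepsilon^2$ and hence $D_\xi(b)\le\varepsilon^2$, giving $\|b\xi-\langle b\xi,\xi\rangle\xi\|\le\varepsilon$; the quantitative Lipschitz step in the proof of Proposition \ref{pr2} (applied with an approximate eigenvector of $a$ at $\alpha$) provides $|\langle b\xi,\xi\rangle-f(\alpha)|\le C_1\varepsilon$. Combining these and absorbing $\|(f(a)-f(\alpha))E(\Delta)\|\le L\varepsilon$ yields $\|(b-f(a))E(\Delta)\|\le C'\varepsilon$.

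To finish, I would partition $\sigma(a)=\bigsqcup_j\Delta_j$ into Borel sets of diameter $\le\varepsilon$, put $p_j=E(\Delta_j)$, and decompose each column of $b-f(a)$ as $(b-f(a))p_j=p_j(b-f(a))p_j+p_j^\perp bp_j$ (using $[f(a),p_j]=0$). The diagonal pieces have ranges in the mutually orthogonal $p_j\h$, so
\[
\Big\|\sum_jp_j(b-f(a))p_j\,\eta\Big\|^2=\sum_j\|p_j(b-f(a))p_j\,\eta\|^2\le C'^2\varepsilon^2\|\eta\|^2,
\]
and reorganizing the off-diagonal sum by output index gives $\sum_jp_j^\perp bp_j=\sum_kp_kbp_k^\perp$ with $\|(\sum_kp_kbp_k^\perp)\eta\|^2=\sum_k\|p_kbp_k^\perp\eta\|^2$ by the same orthogonality. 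The main obstacle is the absence of an a priori ``row'' estimate on $p_kbp_k^\perp$: the naive bound derived from the column estimates only gives $\|p_kbp_k^\perp\|\le C\varepsilon\sqrt{N}$, which is useless when the cell count $N$ grows like $\varepsilon^{-2}$ for a two-dimensional spectrum. To overcome this I would apply the hypothesis to test vectors $\xi+z\eta$ with $\xi\in p_j\h$ and $\eta\in p_k\h$ ($j\neq k$), as in the proof of Theorem \ref{th1}: spectral orthogonality forces the $z^2$-coefficient in $D_{\xi+z\eta}(a)$ to vanish, so the $z^2$-coefficient in the nonnegative polynomial $\kappa D_{\xi+z\eta}(a)-D_{\xi+z\eta}(b)$ equals exactly $\langle b\eta,\xi\rangle\overline{\langle b\xi,\eta\rangle}$, and averaging that polynomial against $e^{-2i\theta}$ with $z=re^{i\theta}$ and optimizing in $r$ yields a Cauchy--Schwarz-type bound on this product of off-diagonal matrix entries of $b$. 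Combined with the column estimates this is designed to drive the off-diagonal contribution to zero as $\varepsilon\to0$, so $\|(b-f(a))\eta\|\to0$ for every $\eta$ and $b=f(a)$.
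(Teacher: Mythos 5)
Your $(\Leftarrow)$ direction is correct (the double-integral identity $D_\xi(a)=\tfrac12\iint|\lambda-\mu|^2\,d\mu_\xi\,d\mu_\xi$ is a clean route, equivalent in substance to the paper's), and your local ``column'' estimate $\|(b-f(\alpha))E(\Delta)\|\le C\varepsilon$ is sound: it follows from $D_\xi(b)\le D_\xi(a)\le\varepsilon^2$ together with the quantitative step (\ref{33}) in the proof of Proposition \ref{pr2}, exactly as you say. The gap is in the step you yourself flag: killing the off-diagonal part $\sum_j p_j^\perp b p_j$. Your proposed fix does not close it. Testing $D_{\xi+z\eta}(b)\le D_{\xi+z\eta}(a)$ with $\xi\in p_j\h$, $\eta\in p_k\h$ and averaging against $e^{-2i\theta}$ gives, after optimizing in $r$, an inequality of the shape $|P_2|\le P_3+2\sqrt{P_0P_5}$, where $P_0\le D_\xi(a)\le\varepsilon^2$ and $P_5\le D_\eta(a)\le\varepsilon^2$ are indeed small, but $P_3$ (the $|z|^2$-coefficient of the nonnegative polynomial) is not: computing it under the spectral orthogonality relations gives
$$P_3=\|a\xi\|^2+\|a\eta\|^2-2\rea\bigl(\overline{\inner{a\xi}{\xi}}\inner{a\eta}{\eta}\bigr)-\Bigl(\|b\xi\|^2+\|b\eta\|^2-2\rea\bigl(\overline{\inner{b\xi}{\xi}}\inner{b\eta}{\eta}\bigr)\Bigr)+|\inner{b\eta}{\xi}|^2+|\inner{b\xi}{\eta}|^2,$$
which is approximately $|\alpha_j-\alpha_k|^2-|f(\alpha_j)-f(\alpha_k)|^2+|\inner{b\eta}{\xi}|^2+|\inner{b\xi}{\eta}|^2$. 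This is $O(1)$, not $O(\varepsilon)$, and worse, it contains the squares of the very entries whose product $|P_2|=|\inner{b\eta}{\xi}||\inner{b\xi}{\eta}|$ you are trying to bound, so by AM--GM the resulting inequality is essentially vacuous. Even if you could make each individual block $p_kbp_j$ small, that is already delivered by your column estimate and still leaves the $\sqrt N$ loss: with only column-wise $\ell^2$ control (and no control on $b^*$, which the hypothesis $D_\xi(b)\le D_\xi(a)$ does not provide), no Schur-test argument will bound $\|\sum_{j\ne k}p_kbp_j\|$ independently of the number of cells.

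What is actually needed, and what the paper supplies, is a proof that $b$ commutes with $a$ \emph{before} the spectral partition is introduced; then $p_kbp_j=0$ for $j\ne k$ and only your diagonal estimate is used. The paper gets this from Voiculescu's Hilbert--Schmidt strengthening of the Weyl--von Neumann--Berg theorem: choose a diagonal normal $c=\mathrm{diag}(\gamma_j)$ with $\|a-c\|_2<\varepsilon$; the eigenvector estimates $\|(b-\beta_j1)\xi_j\|\le\|(a-c)\xi_j\|$ then sum in the Hilbert--Schmidt norm (where the $\ell^2$ summation over columns is exactly what the norm measures), giving $\|b-d\|\le\|b-d\|_2<\varepsilon$ for a diagonal $d$ commuting with $c$, whence $\|[a,b]\|\le 4\varepsilon(\|a\|+\|b\|)\to0$. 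You should either import this step or find a substitute for it; as written, your argument does not prove $b=f(a)$. (Two minor points: McShane extension should be applied to the real and imaginary parts separately, or replaced by Kirszbraun; and for non-separable $\h$ a reduction to separable reducing subspaces is needed, as in the paper.)
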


\begin{proof}Assume that $\dxb\leq\dxa$ for all $\xi\in\h$. We may assume that $a$ is not a scalar (otherwise
the proof is trivial). First consider the case when
$a$ can be represented by a diagonal matrix ${\rm diag}\,
(\alpha_j)$ in some orthonormal basis $(\xi_j)$ of $\h$. If $f:\sigma(a)\to\sap{b}$ is 
defined as in Definition \ref{de1}, then $b\xi_j=f(\alpha_j)\xi_j$ for all $j$, hence
$b=f(a)$. 

For a general normal $a$, first suppose that $\h$ is separable. Then by Voiculescu's version of the Weyl-von Neumann-Bergh theorem \cite{Vo1}, 
given $\varepsilon>0$, there exists a diagonal normal operator
$c={\rm diag}\,(\gamma_j)$ such that $\|a-c\|_2<\varepsilon$, where $\|\cdot\|_2$ denotes
the Hilbert-Schmidt norm. Let $(\xi_j)$ be an orthonormal basis of $\h$ consisting
of eigenvectors of $c$, so that $c\xi_j=\gamma_j\xi_j$. Since $\dis{\xi_j}{b}\leq
\dis{\xi_j}{a}$, by Remark \ref{re0}(i) there exist scalars $\beta_j\in\bc$ such that $\|(b-\beta_j1)\xi_j\|
\leq\|(a-\gamma_j1)\xi_j\|=\|(a-c)\xi_j\|$, hence
$$\sum_j\|(b-\beta_j1)\xi_j\|^2\leq\sum_j\|(a-c)\xi_j\|^2<\varepsilon^2.$$
In particular $\|b-d\|<\varepsilon$, where $d$ is the diagonal operator defined by
$d\xi_j=\beta_j\xi_j$. Since $d$ and $c$ commute, it follows that
$$\|bc-cb\|=\|(b-d)c-c(b-d)\|<2\varepsilon\|c\|\leq2\varepsilon(\|a\|+\varepsilon)\leq4\varepsilon\|a\|\ (\mbox{if}\ 
\varepsilon\leq\|a\|),$$
hence also
$$\|ba-ab\|=\|(bc-cb)+b(a-c)-(a-c)b\|\leq4\varepsilon(\|a\|+\|b\|).$$
Since this holds for all $\varepsilon>0$, it follows that $a$ and $b$ commute.
If $a$ has a cyclic vector this already implies that $b$ is in $(a)^{\prime\prime}$
hence a measurable function of $a$, but in general we need an additional argument
to prove this. Let $f:\sigma(a)\to\sap{b}$ be defined as in Definition \ref{de1}.
(Note that $\sigma(a)=\sap{a}$ since $a$ is normal.) Let $e(\cdot)$ be the projection
valued spectral measure of $a$, $\xi\in\h$ any separating vector for the von Neumann algebra $(a)^{\prime\prime}$
generated by $a$ and $\varepsilon>0$. 
If $\alpha$ is any point in
$\sigma(a)$, $U$ is any Borel subset of $\sigma(a)$ containing $\alpha$ and 
$\xi_U:=\|e(U)\xi\|^{-1}e(U)\xi$,
then $\|(a-\alpha1)\xi_U\|$ converges to $0$ as the diameter of $U$ shrinks to $0$.
For each $U$ let $\beta_U=\inner{b\xi_U}{\xi_U}$ so that $\|(b-\beta_U1)\xi_U\|\leq
\|(a-\alpha1)\xi_U\|$; then $f(\alpha)=\lim_{U\to\{\alpha\}}\beta_U$ by the definition 
of $f$. Thus, since by Proposition \ref{pr2} $f$ is a Lipschitz function, 
for each $\alpha\in\sigma(a)$ there is an open neighborhood $U_{\alpha}$
with the diameter at most $\varepsilon$ such that $|f(\alpha)-\beta_U|<\varepsilon$
for all Borel subsets $U\subseteq U_{\alpha}$ and $|f(\alpha_2)-f(\alpha_1)|<\varepsilon$
if $\alpha_1,\alpha_2\in U_{\alpha}$. By compactness we can cover $\sigma(a)$ with finitely
many such neighborhoods $U_{\alpha_i}$ and this covering then determines a partition
of $\sigma(a)$ into finitely many disjoint Borel sets $\Delta_j$ (say $j=1,\ldots,n$)
such that each $\Delta_j$ is contained in some $U_{\alpha_{i(j)}}$. Let $e_j=e(\Delta_j)$.
Now we can estimate, denoting $\beta_j=\beta_{\Delta_j}$,
\begin{align*}\|(b-f(a))e_j\xi\|\leq&\|(b-\beta_{j}1)e_j\xi\|+|(\beta_{j}-f(\alpha_{i(j)})|
\|e_j\xi\|+\|(f(\alpha_{i(j)})1-f(a))e_j\xi\|\\
\leq&\|(a-\alpha_j1)e_j\xi\|+|(\beta_{j}-f(\alpha_{i(j)})|
\|e_j\xi\|+\|(f(\alpha_{i(j)})1-f(a))e_j\xi\|\\
\leq&3\varepsilon\|e_j\xi\|.
\end{align*}
(Here we have used the spectral theorem to estimate the term $\|(f(\alpha_{i(j)})1-f(a))e_j\xi\|$ from above by
$\sup_{\alpha\in\Delta_j}|f(\alpha_{i(j)}-f(\alpha)|\|e_j\xi\|\leq\varepsilon\|e_j\xi\|$.)
Since $b$ commutes with $a$, hence also with all spectral projections of $a$, it follows that
\begin{align*}\|(b-f(a))\xi\|^2=&\|\sumjn e_j(b-f(a))e_j\xi\|^2
=\sumjn \|e_j(b-f(a))e_j\xi\|^2\\
\leq&
9\varepsilon^2\sumjn\|e_j\xi\|^2=9\varepsilon^2\|\xi\|^2.\end{align*}
Thus $\|(b-f(a))\xi\|\leq3\varepsilon\|\xi\|$ and, since this holds for all $\varepsilon>0$ and separating vectors of $(a)^{\prime\prime}$
are dense in $\h$, we conclude that $b=f(a)$.

If $\h$ is not necessarily separable, $\h$ can be decomposed into an orthogonal sum of separable subspaces
$\h_k$ that reduce both $a$ and $b$ and are such that $\sigma(a|\h_k)=\sigma(a)$. For each $k$ there exists
a Lipschitz function $f_k$ such that $b|\h_k=f(a|\h_k)$. Since for any two $k,j$ the space $\h_k\oplus\h_j$ is also separable,
there also exists a function $f$ such that $b|(\h_j\oplus\h_k)=f(a|(\h_j\oplus\h_k))$ and it follows easily that
$f_k=f=f_j$. Thus $b=f(a)$.

Conversely, if $b=f(a)$ for a function $f$ such that $$|f(\alpha_2)-f(\alpha_1)|\leq \kappa|\alpha_2-\alpha_1|$$
for all $\alpha_1,\alpha_2\in\sigma(a)$ and some constant $\kappa$, then for a fixed unit vector $\xi\in\h$
denote by $\mu$ the probability measure on Borel subsets of $\sigma(a)$ defined by $\mu(\cdot)=\inner{e(\cdot)\xi}{\xi}$.
Since $\dxa$ is just the square of the distance of $a\xi$ to $\bc\xi$ and similarly for $\dxb$, the estimate
\begin{align*}\|(f(a)-f(\alpha))1\xi\|^2=&\int_{\sigma(a)}|f(\lambda)-f(\alpha)|^2\, d\mu(\lambda)\\
\leq& \int_{\sigma(a)}\kappa|\lambda-\alpha|^2\, d\mu(\lambda)=\kappa\|(a-\alpha1)\xi\|^2\end{align*}
implies that $\dxb\leq\kappa\dxa$. 

Finally, since any state $\omega$ is in the weak*-closure of the set of all convex combinations of vector states and each
such combination can be represented as a vector state on ${\rm B}(\h^n)$ for some $n\in\bn$, the argument of the previous
paragraph (applied to $a^{(n)}$ and $b^{(n)}=f(a^{(n)})$ implies that $\dob\leq\doa$.
\end{proof}
A variant of the above Theorem \ref{th3} was proved in \cite{JW} and generalized to C$^*$-algebras in \cite{BMS}, but
both under the much stronger hypothesis that $\|[b,x]\|\leq\kappa\|[a,x]\|$ for all elements $x$, where $[a,x]$ denotes
the commutator $ax-xa$. (See Lemma \ref{le41} below for the explanation of the connection between the two conditions.)

The following Corollary was proved in \cite[5.2]{BMS} for prime C$^*$-algebras, but under a much stronger assumption
about the connection between $a$ and $b$ instead of the inequality $\dob\leq\doa$ for pure states $\omega$.

\begin{co}\label{co31}Let $A$ be a unital C$^*$-algebra, $a,b\in A$, $a$ normal. If $\dob\leq\doa$ for all states
$\omega$ on $A$, then $b=f(a)$ for a function $f$ on $\sigma(a)$ such that $|f(\mu)-f(\lambda)|\leq|\mu-\lambda|$ for
all $\lambda,\mu\in\sigma(a)$. If $A$ is prime,
it suffices to assume
the condition for pure states only.
\end{co}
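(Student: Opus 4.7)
The plan is to reduce both parts of the corollary to Theorem~\ref{th3} via a well-chosen faithful representation of $A$, exploiting that the variance $D_{\omega}(x)=\omega(x^*x)-|\omega(x)|^2$ depends only on the state and not on the ambient algebra.

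For the general statement I would take the universal representation $\pi:A\hookrightarrow\bh$, in which every state of $A$ is realised as a vector state $\omega_{\xi}\circ\pi$ for some unit vector $\xi\in\h$. The hypothesis then reads $D_{\xi}(\pi(b))\leq D_{\xi}(\pi(a))$ for every unit $\xi\in\h$, and $\pi(a)$ is normal in $\bh$ with $\sigma(\pi(a))=\sigma_{A}(a)$, since the spectrum of a normal element is preserved under $C^*$-algebra inclusion. Theorem~\ref{th3} with $\kappa=1$, combined with the Lipschitz constant $1$ supplied by Proposition~\ref{pr2}, then produces a function $f$ on $\sigma(a)$ with $|f(\mu)-f(\lambda)|\leq|\mu-\lambda|$ such that $\pi(b)=f(\pi(a))$. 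Continuity of $f$ gives $f(\pi(a))=\pi(f(a))$, where $f(a)\in C^*(a,1)\subseteq A$ is formed via the continuous functional calculus, and faithfulness of $\pi$ yields $b=f(a)$.

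For the prime statement I would take instead a faithful \emph{irreducible} representation $\pi$ of $A$. Every vector state $\omega_{\xi}\circ\pi$ is then a pure state of $A$, since the GNS representation of $\omega_{\xi}\circ\pi$ is unitarily equivalent to the ambient irreducible $\pi$. Hence the pure-state hypothesis immediately gives $D_{\xi}(\pi(b))\leq D_{\xi}(\pi(a))$ at every unit $\xi\in\h$, and the argument of the previous paragraph concludes the proof.

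The main obstacle is ensuring the existence of such a representation in the prime case: this amounts to the equivalence of prime and primitive $C^*$-algebras, which is classical for separable $A$ (Dixmier) but not automatic in general. If primitivity fails, the fallback would be to argue as in Corollary~\ref{co11}: pass to the weak* closure $R$ of $A$ in the universal representation, use \cite[Theorem 5]{G} to transport the variance inequality to pure states of $R$, decompose the central algebra into summands on which Theorem~\ref{th3} produces Lipschitz functions of common constant $1$, and patch these together into a single Lipschitz $f$ on $\sigma(a)$ using continuity of the central Gelfand transforms as in the proof of Corollary~\ref{co11}.
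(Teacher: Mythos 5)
Your argument for the first statement is essentially the paper's: fix a faithful unital representation $A\subseteq\bh$, observe that every vector state of $\bh$ restricts to a state of $A$, so the hypothesis yields $\dxb\leq\dxa$ for all unit vectors $\xi\in\h$, and apply Theorem~\ref{th3} (the function from Definition~\ref{de1} and Proposition~\ref{pr2} does have Lipschitz constant $1$ when $\kappa=1$). That part is correct; the choice of the universal representation is harmless but not needed.

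The prime case has a genuine gap, which you partly sense but do not close. A prime C$^*$-algebra need not be primitive (this equivalence fails outside the separable setting), so a faithful irreducible representation of $A$ itself may simply not exist, and your primary argument cannot be run. Your fallback --- the central decomposition of the weak* closure as in Corollary~\ref{co11} --- does not rescue it: applying Theorem~\ref{th3} fibrewise over the centre of the enveloping von Neumann algebra produces a family of Lipschitz functions $f_t$ on the varying spectra $\sigma(a(t))$, and without a further argument these need not patch into one function; this is precisely why the theorem following Corollary~\ref{co32} in the paper only concludes that $b$ lies in the norm closure of sums $\sum_j p_jf_j(a)$ rather than that $b=f(a)$ for a single $f$. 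The missing idea is a separable reduction: the C$^*$-subalgebra generated by $a$ and $b$ is contained in a \emph{separable prime} C$^*$-subalgebra $A_0$ of $A$ (by \cite[3.1]{EZ}, or the elementary proof in \cite[3.2]{M}); separable prime algebras are primitive, so $A_0$ admits a faithful irreducible representation on some $\h$; every vector state of $\bh$ then restricts to a pure state of $A_0$, and every pure state of $A_0$ extends to a pure state of $A$, so the pure-state hypothesis on $A$ transfers to all unit vectors of $\h$ and the first part of the argument applies to $A_0$.
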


\begin{proof}The first statement follows immediately from Theorem \ref{th3} since we may assume that $A\subseteq\bh$ for 
a Hilbert space $\h$ and each vector state on $\bh$ restricts to a state on $A$. For the second statement, we note
that the C$^*$-algebra generated by $a$ and $b$ is contained in a separable prime C$^*$-subalgebra $A_0$ of $A$
by \cite[3.1]{EZ} (an elementary proof of this is in \cite[3.2]{M}), and $A_0$ is primitive by \cite[p. 102]{Ped}, hence we may assume that $A_0$ is an irreducible C$^*$-subalgebra
of $\bh$. But then each vector state on $\bh$ restricts to a pure state on $A_0$, and each pure state on $A_0$ extends
to a pure state on $A$.
\end{proof}

\begin{co}\label{co32}Let $a,b\in\bh$ satisfy $\dxb\leq\dxa$ for all $\xi\in\h$. If $a$ is essentially normal,
then this implies that $\dot{b}=f(\dot{a})$ for a Lipschitz function $f$ on the essential spectrum of $a$, where
$\dot{a}$ denotes the coset of $a$ in the Calkin algebra.
\end{co}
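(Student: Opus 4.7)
The plan is to pass to the Calkin algebra $B := \bh/\kh$ and invoke Corollary \ref{co31}. Writing $q:\bh\to B$ for the quotient, essential normality of $a$ makes $\dot a := q(a)$ normal in $B$, and $\sigma(\dot a)$ is by definition the essential spectrum $\sigma_e(a)$. So the conclusion will follow once we establish two things: (i) that $B$ is a prime $C^*$-algebra, so that the pure-state version of Corollary \ref{co31} applies; and (ii) that $D_\omega(\dot b)\leq D_\omega(\dot a)$ for every pure state $\omega$ on $B$.

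For (i), the closed two-sided ideals of $\bh$ containing $\kh$ are totally ordered by inclusion (when $\h$ is separable, $B$ is already simple; in general these ideals are parametrized by infinite cardinals $\leq\dim\h$). Any two nonzero closed ideals of $B$ therefore intersect nontrivially, and $B$ is prime.

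For (ii), given a pure state $\omega$ on $B$, lift it via Hahn-Banach to a pure state $\tilde\omega$ on $\bh$ vanishing on $\kh$. Since $\bh$ acts irreducibly on $\h$, Glimm's theorem provides a net of unit vectors $\xi_\alpha\in\h$ with $\omega_{\xi_\alpha}\to\tilde\omega$ weak$^*$. For any fixed $x\in\bh$ the map $\omega\mapsto\omega(x^*x)-|\omega(x)|^2 = D_\omega(x)$ is weak$^*$-continuous, so the assumed inequality $D_{\omega_{\xi_\alpha}}(b)\leq D_{\omega_{\xi_\alpha}}(a)$ survives passage to the limit and gives $D_{\tilde\omega}(b)\leq D_{\tilde\omega}(a)$. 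Since $\tilde\omega$ annihilates $\kh$, the values $\tilde\omega(a^*a)$ and $\tilde\omega(a)$ agree with $\omega(\dot a^*\dot a)$ and $\omega(\dot a)$, hence $D_{\tilde\omega}(a)=D_\omega(\dot a)$ and likewise for $b$, producing the required inequality in $B$.

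Applying the prime-algebra version of Corollary \ref{co31} to $\dot a,\dot b\in B$ now yields a function $f$ on $\sigma(\dot a)=\sigma_e(a)$, Lipschitz with constant $1$, such that $\dot b=f(\dot a)$. The real content of the argument is step (ii): without the Glimm-type approximation of singular pure states by vector states, the variance hypothesis, which is phrased only in terms of unit vectors of $\h$, would not transfer to pure states on $B$. Once that approximation is available, the remainder is a formal reduction to Corollary \ref{co31}.
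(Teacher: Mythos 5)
Your argument is correct and follows essentially the same route as the paper's own proof: pass to the Calkin algebra, use Glimm's theorem together with the weak$^*$-continuity of $\omega\mapsto D_{\omega}(x)$ to transfer the variance inequality from vector states to states annihilating $\kh$, and then invoke Corollary \ref{co31}. The only (harmless) difference is that you route through the pure-state/prime version of that corollary, which obliges you to verify primeness of the Calkin algebra, whereas the paper applies the all-states version directly and skips that step.
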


\begin{proof}Any state $\omega$ on the Calkin algebra can be regarded as a  state on $\bh$ annihilating the
compact operators. By Glimm's theorem (see \cite[10.5.55]{KR} or \cite{G}) such a state $\omega$ is a weak* limit of vector 
states, hence $\dob\leq\doa$. The conclusion follows now from Corollary \ref{co31}.
\end{proof}

\begin{theorem}\label{th038}Let $A\subseteq\bh$ be a C$^*$-algebra  $a,b\in A$ and $a$ normal. Denote by $R$ the weak* closure of $A$ and by $Z$ the center of $R$.
Then the inequality $\dob\leq\doa$ holds for all pure states
$\omega$ on $A$ if and only if $b$ is in the norm closure of the set $S$ of all elements of the form $\sum_jp_jf_j(a)$ (finite sum), where $p_j$ are
orthogonal  projections in $Z$ with the sum $\sum_jp_j=1$ and $f_j$ are functions on $\sigma(a)$ such that $|f_j(\mu)-f_j(\lambda)|\leq
|\mu-\lambda|$ for all $\lambda,\mu\in\sigma(a)$.
\end{theorem}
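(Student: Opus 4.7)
The plan is to follow the strategy of Corollary \ref{co11}: reduce to the case $A = R$, quotient $R$ by the maximal ideals $Rt$ of the center $Z$, apply the primitive-algebra result Corollary \ref{co31} in each fiber $R(t) = R/(Rt)$, and glue the fiberwise answers using central projections together with the Stonean structure of $\Delta$. The reduction to $A = R$ combines the density result of \cite{G} with the norm-continuity of variance from Lemma \ref{le1}. For each $t \in \Delta$, the algebra $R(t)$ is primitive \cite{Halp}, and every pure state on $R(t)$ lifts to a pure state on $R$, so the hypothesis $\dob \leq \doa$ persists in each fiber. Because $a(t)$ is normal with $\sigma(a(t)) \subseteq \sigma(a)$, Corollary \ref{co31} produces a $1$-Lipschitz function $f_t$ on $\sigma(a(t))$ with $b(t) = f_t(a(t))$; McShane extension lets me view $f_t$ as $1$-Lipschitz on all of $\sigma(a)$.

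The gluing is the heart of the forward direction. Fix $\varepsilon > 0$. Since continuous functional calculus commutes with quotients, $(b - f_{t_0}(a))(t_0) = b(t_0) - f_{t_0}(a(t_0)) = 0$, and by Glimm's theorem \cite{G} the function $t \mapsto \|(b - f_{t_0}(a))(t)\|$ is continuous on $\Delta$, so $U_{t_0} = \{t \in \Delta : \|(b - f_{t_0}(a))(t)\| < \varepsilon\}$ is an open neighborhood of $t_0$. Compactness yields a finite subcover, and because $\Delta$ is Stonean (being the spectrum of an abelian von Neumann algebra) such a finite open cover admits a clopen refinement, giving a partition $F_1, \ldots, F_n$ with $F_j \subseteq U_{t_j}$ and central orthogonal projections $p_j \in Z$ summing to $1$. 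The $p_j$ being central and orthogonal then yields
\[
\Bigl\| b - \sum_j p_j f_{t_j}(a) \Bigr\| = \max_j \sup_{t \in F_j} \|(b - f_{t_j}(a))(t)\| < \varepsilon,
\]
placing $b$ in the norm closure of $S$.

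For the converse, norm-continuity of $D_\omega$ (Lemma \ref{le1}) reduces the task to showing $\dob \leq \doa$ for a single element $b = \sum_j p_j f_j(a) \in S$. Any pure state $\omega$ on $A$ extends to a pure state on $R$ and is multiplicative against elements of the center \cite[p.~268]{KR}, so $\omega(p_j)^2 = \omega(p_j)$ and exactly one index $j_0$ satisfies $\omega(p_{j_0}) = 1$; this gives $\omega(b) = \omega(f_{j_0}(a))$ and $\omega(b^*b) = \omega(f_{j_0}(a)^* f_{j_0}(a))$, whence $\dob = D_\omega(f_{j_0}(a)) \leq \doa$ by the final clause of Theorem \ref{th3}. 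The main obstacle is that the fiberwise Lipschitz functions $f_t$ need not depend continuously, or even measurably, on $t$, so there is no direct way to assemble them into a single $Z$-valued function of $a$; the local norm-approximation via Glimm's continuity combined with the Stonean partition is precisely what forces the use of finite central-projection combinations $\sum p_j f_j(a)$ rather than a single $f(a)$.
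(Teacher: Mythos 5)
Your proof is correct and follows essentially the same route as the paper: reduction to fibers over the maximal ideal space of $Z$, application of Corollary \ref{co31} in each primitive quotient $R(t)$, extension of each $f_t$ to a $1$-Lipschitz function on $\sigma(a)$, a clopen (Stonean) partition of $\Delta$ to glue via central projections, and the converse via multiplicativity of pure states on the center together with Lemma \ref{le1} and the last clause of Theorem \ref{th3}. The one quibble is that to preserve the Lipschitz constant exactly $1$ for a complex-valued function you should invoke Kirszbraun's theorem (as the paper does) rather than McShane, which applied to real and imaginary parts separately can inflate the constant to $\sqrt{2}$.
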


\begin{proof}Note that $g(a(t))=g(a)(t)$ for each continuous
function $g$ on $\sigma(a)$. We will use the notation from the proof of Corollary \ref{co11}. Similarly as in that proof, the condition that $\dob\leq\doa$ for all pure states $\omega$ on $A$ implies the same condition 
for all pure states on $R(t)$ for all $t\in\Delta$ and  
it follows then
from Corollary \ref{co31} that for each $t$ there exists a Lipschitz function $f_t$ on $\sigma(a(t))$ with the Lipschitz constant $1$ such that
$b(t)=f_t(a(t))$. By Kirzbraun's theorem each $f_t$ can be extended to a Lipschitz function on $\sigma(a)$, denoted again by $f_t$, with the same Lipschitz 
constant $1$. Given $\varepsilon>0$, since $\Delta$ is extremely disconnected and  for each $x\in R$ the function $t\mapsto\|x(t)\|$ is continuous 
on $\Delta$ by \cite{G}, 
each $t\in\Delta$ has a clopen neighborhood $U_t$ such that $\|f_t(a)(s)-b(s)\|\leq\varepsilon$ for all $s\in U_t$. Let $(U_j)$ be a finite covering
of $\Delta$ by such neighborhoods $U_j:=U_{t_j}$ and for each $j$ let $p_j$ be the central projection in $R$ that corresponds to the clopen set $U_j$,
and set $f_j:=f_{t_j}$.
Then $$\|b-\sum_jp_jf_j(a)\|\leq\varepsilon.$$
Since this can be done for all $\varepsilon>0$, $b$ is in the closure of the set $S$ as stated in the theorem.

Conversely, suppose that for each $\varepsilon>0$ there exists an element $c\in R$ of the form $c=\sum_jp_jf_j(a)$, where $p_j\in Z$ are projections 
with the sum $1$ and $f_j$ are Lipschitz functions with the Lipschitz constant $1$, such that $\|b-c\|<\varepsilon$. Then for each pure state $\omega$ on 
$R$ and $x\in R$, $z\in Z$
the equality $\omega(zx)=\omega(z)\omega(x)$ holds \cite[4.3.14]{KR}). In particular $\omega|Z$ is multiplicative, hence $\omega(p_{j_0})=1$ for
one index $j_0$ and $\omega(p_j)=0$ if $j\ne j_0$. It follows now by a straightforward computation that $\dis{\omega}{c}=\dis{\omega}{f_{j_0}(a)}$,
which is at most $\doa$ by the same computation as in the last part of the proof of Theorem \ref{th3}. Now, since $\|b-c\|<\varepsilon$,  it follows from Lemma \ref{le1} (by letting $\varepsilon\to0$)
that $\dob\leq\doa$.
\end{proof}

\section{Is a derivation determined by the norms of its values?}

Given an operator $a\in\bh$, we will denote by $d_a$ the derivation on $\bh$ defined by 
$$d_a(x)=ax-xa.$$ For any vectors $\xi,\eta\in\h$ we denote by $\xieta$ the rank one operator on $\h$ defined
by $(\xieta)(\zeta)=\inner{\zeta}{\eta}\xi.$ The following lemma enables us to interpret the results of the previous
section in terms of derivations. 

\begin{lemma}\label{le41}For each unit vector $\xi\in\h$ and $a\in\bh$ we have the equality $$\|d_a(\xixi)\|^2=\max\{\dxa,\dxaa\}.$$
Thus, if $a$ is normal, then $\|d_a(\xixi)\|^2=\dxa$.\end{lemma}

\begin{proof}Denote $x=\xixi$. The square of the norm of $d_a(x)=a\xi\otimes\xi^*-\xi\otimes(a^*\xi)^*$ is equal to the
spectral radius of the operator $T:=d_a(x)^*d_a(x)$, which is the largest eigenvalue of the restriction of
$T$ to the span $\h_0$ of $\xi$ and $a^*\xi$. If $\xi$ and $a^*\xi$ are linearly independent, then the
matrix of $T|\h_0$ in the basis $\{\xi,a^*\xi\}$ can easily be computed to be
$$\left[\begin{array}{cc}
\dxa&\inner{\xi}{a\xi}(\|a\xi\|^2-\|a^*\xi\|^2)\\
0&\dxaa\end{array}\right].$$
Thus $\|d_a(x)\|^2=\max\{\dxa,\dxaa\}$. By continuity (considering perturbations of $a$) we see that this equality holds even if $\xi$ and $a^*\xi$ are linearly
dependent .
\end{proof}

\begin{theorem}\label{th42}If $a,b\in\bh$ are such that 
\begin{equation}\label{400}\|[b,x]\|=\|[a,x]\|\ \ \mbox{for all}\ x\in\bh,\end{equation} 
then either
$b=\sigma a+\lambda1$ for some scalars $\sigma,\lambda\in\bc$ with $|\sigma|=1$ or there exist a unitary $u$ and 
scalars $\alpha,\beta,\lambda,\mu$ in $\bc$ with $|\beta|=|\alpha|$ such that $a=\alpha u^*+\lambda1$ 
and $b=\beta u+\mu1$.
\end{theorem}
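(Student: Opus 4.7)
First specialize (\ref{400}) to rank-one projections $x = \xi\otimes\xi^*$ for unit vectors $\xi\in\h$: by Lemma \ref{le41} we have $\|[a,\xi\otimes\xi^*]\|^2 = \dxa$ and analogously for $b$, so (\ref{400}) forces $\dxa = \dxb$ for every $\xi$. Theorem \ref{th1} then gives a dichotomy: either (i) $b = \sigma a + \lambda 1$ with $|\sigma|=1$, which is already the first alternative; or (ii) $a$ is normal and $b = \sigma a^* + \tau 1$ with $|\sigma|=1$, $\tau\in\bc$. The whole remaining task is to analyze case (ii) using the full strength of (\ref{400}), not merely its rank-one consequences.

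In case (ii) we have $[b,x]=\sigma[a^*,x]$, so (\ref{400}) becomes
\[
\|[a^*,x]\| = \|[a,x]\|\qquad (\forall\,x\in\bh).
\]
I claim that, together with the normality of $a$, this equality forces $\sigma(a)$ to lie on some generalized circle in $\bc$ (a circle or a line). Given the claim: if $\sigma(a)$ is on a line, then after a suitable shift and unimodular rotation $a$ is self-adjoint, whence $a^*$ is an affine function of $a$ and $b = \sigma a^* + \tau 1$ again has the form of the first alternative. Otherwise $\sigma(a)$ lies on a proper circle $\{z:|z-\lambda|=r\}$, which for normal $a$ means $(a-\lambda)(a-\lambda)^* = r^2\cdot 1$; hence $a-\lambda = \alpha u^*$ for some unitary $u$ and $|\alpha|=r$, and correspondingly $b = \sigma(\bar\alpha u + \bar\lambda)+\tau = \beta u + \mu 1$ with $\beta = \sigma\bar\alpha$ (so $|\beta|=|\alpha|$) and $\mu = \sigma\bar\lambda + \tau$, which is the second alternative.

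The core of the proof is thus the spectral claim. I argue by contradiction: suppose $\sigma(a)$ is not contained in any generalized circle, so it contains four distinct points $\lambda_1,\ldots,\lambda_4$ whose cross-ratio is non-real. Since $a$ is normal, each $\lambda_j$ has an approximate eigenvector, and these can be chosen nearly orthonormal; call them $\xi_1,\ldots,\xi_4$. Restricted to their span $a$ acts essentially as $\mathrm{diag}(\lambda_1,\ldots,\lambda_4)$, so $[a,x]_{ij} = (\lambda_i-\lambda_j)x_{ij}$ in this basis. A direct calculation with a rank-two choice such as
\[
x = \xi_1\otimes\xi_3^* + \xi_1\otimes\xi_4^* + (1+i)\,\xi_2\otimes\xi_3^* + \xi_2\otimes\xi_4^*
\]
shows that $|([a,x]^*[a,x])_{34}|$ differs from $|([a^*,x]^*[a^*,x])_{34}|$ in a manner controlled by the cross-ratio of the $\lambda_j$, so the characteristic polynomials of $[a,x]^*[a,x]$ and $[a^*,x]^*[a^*,x]$ disagree and the two operator norms are strictly unequal. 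By a standard perturbation argument this inequality persists for the approximate eigenvectors in $\h$, producing a bona fide $x\in\bh$ violating the hypothesis. The main obstacle is this explicit four-dimensional computation together with its transfer to the infinite-dimensional setting---note that it is specifically the four-point case that matters, since any three points of $\bc$ automatically lie on a common generalized circle, so three-point compressions yield no information.
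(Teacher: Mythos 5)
Your strategy is genuinely different from the paper's and, in outline, it works. After the common reduction (Lemma \ref{le41} plus Theorem \ref{th1} give that either the first alternative holds or $a$ is normal and $b=\sigma a^*+\tau1$), the paper proceeds through operator-space machinery: Lemma \ref{le42} upgrades the isometry $[a,x]\mapsto[b,x]$ to a complete isometry of the operator systems $\cs_a\to\cs_b$, Lemma \ref{le43} identifies both injective envelopes with $\mattwo{\bh}$ (which is why the paper must first dispose of the case that $a$ satisfies a quadratic equation), and the resulting implementation $\phi(y)=uyv$ is unravelled algebraically via Remark \ref{re400}. You instead stay inside $\bh$ and reduce everything to the claim that $\sigma(a)$ lies on a generalized circle; your translation of the two geometric cases (line versus circle) into the two alternatives of the theorem is correct, and the claim itself is true and suffices. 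This buys a more elementary proof: no matrices over $\bh$, no injective envelopes, and no separate quadratic case.

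The gap is in the step you yourself call the core. The justification ``the characteristic polynomials of $[a,x]^*[a,x]$ and $[a^*,x]^*[a^*,x]$ disagree and the two operator norms are strictly unequal'' is a non sequitur: two positive matrices can have different characteristic polynomials and the same largest eigenvalue, and the operator norm sees only the largest eigenvalue. Nor is it checked that your particular rank-two $x$ detects every non-real cross-ratio. The correct way to close this is via Schur multipliers. For $x$ supported on the corner spanned by $\{\xi_1,\xi_2\}$ against $\{\xi_3,\xi_4\}$ one has (in the exact diagonal model) $[a,x]_{ij}=(\lambda_i-\lambda_j)x_{ij}$ and $[a^*,x]=M\circ[a,x]$, where $M=[m_{ij}]$ with $m_{ij}=\overline{(\lambda_i-\lambda_j)}/(\lambda_i-\lambda_j)$; since the four points are distinct, $[a,x]$ ranges over all matrices on that corner, so the hypothesis forces the unimodular $2\times2$ Schur multiplier $M$ to be isometric, hence contractive. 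A unimodular Schur multiplier is contractive only if it has rank one (equality in the Cauchy--Schwarz step of the standard factorization $m_{ij}=\inner{v_j}{u_i}$ forces all $u_i,v_j$ to be unimodular multiples of one unit vector), i.e. $m_{13}m_{24}=m_{14}m_{23}$, and a direct computation shows this identity is exactly $\overline{\chi}=\chi$ for the cross-ratio $\chi$ of $\lambda_1,\ldots,\lambda_4$. So a non-real cross-ratio yields Schur norm $>1$ and a concrete $x$ with $\|[a^*,x]\|>\|[a,x]\|$; the transfer to $\h$ by approximate eigenvectors is the same device as in Lemma \ref{le521} and uses normality through $\|(a^*-\overline{\lambda_i}1)\xi_{i,n}\|=\|(a-\lambda_i1)\xi_{i,n}\|$. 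With that substitution your proof is complete.
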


A variant of this theorem was proved in \cite[5.3, 5.4]{BMS} in general C$^*$-algebras, but under the additional assumption
that $a$ and $b$ are normal. The methods in \cite{BMS} are different from those we will use below. The author is not able to deduce Theorem \ref{th42} as a direct consequence of the previous results; for a proof we will need two additional lemmas. We denote by $a^{(n)}$ the direct sum of $n$ copies of an operator $a\in\bh$,
thus $a^{(n)}$ acts on $\h^n$. We will also use the usual notation  $[x,y]:=xy-yx$, so that $d_a(x)=[a,x]$.

\begin{re}\label{re42}We will need the following, perhaps well-known, general fact:  for any bounded
linear operators $S,T:X\to Y$ between Banach spaces the inequality 
\begin{equation}\label{200}\|Tx\|\leq\|Sx\|\ \ (x\in X)\end{equation}
implies $\|T^{\sharp\sharp}v\|
\leq\|S^{\sharp\sharp}v\|$ ($v\in X^{\sharp\sharp}$), where $T^{\sharp\sharp}$ denotes the second adjoint of $T$. This  follows from  \cite[1.1, 1.3]{JW},
but here is a slightly more direct proof. The inequality (\ref{200}) simply means that there is a contraction $Q$ from the
range of $S$ into the range of $T$ such that $T=QS$. But then $T^{\sharp\sharp}=Q^{\sharp\sharp}S^{\sharp\sharp}$,
which clearly implies the desired conclusion.
\end{re}

The content of the following lemma was observed already by Kissin and Shulman in the proof of \cite[3.3]{KS}. 

\begin{lemma}\label{le42}\cite{KS} Let $a,b\in\bh$ and suppose that 
\begin{equation}\label{41}\|[b,x]\|\leq\|[a,x]\|\end{equation} for all $x\in\kh$. If $a$ is normal, then $\|[b^{(n)},x]\|\leq\|[a^{(n)},x]\|$ for all
$x\in\matn{\bh}$ ($n\times n$ matrices with the entries in $\bh$) and all $n\in\bn$.
\end{lemma}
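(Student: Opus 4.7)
The plan is to pass to the bidual, use normality to express $b$ as a function of $a$, reduce to the diagonal case, and then invoke the complete boundedness of Schur multipliers.

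First, I would use Remark \ref{re42} to upgrade the hypothesis $\|d_b(x)\|\leq\|d_a(x)\|$ from $x\in\kh$ to $x\in\bh$: indeed, $\kh^{\sharp\sharp}=\bh$ canonically (via $\kh^{\sharp}=\trh$ and $\trh^{\sharp}=\bh$), and each of the derivations $d_a,d_b$ on $\bh$ is weak*-weak* continuous, so it coincides with the biadjoint of its restriction to $\kh$. The extended inequality forces $d_a(y)=0\Rightarrow d_b(y)=0$, hence $b\in\{a\}''$; since $a$ is normal, this means $b=f(a)$ for a bounded Borel function $f$ on $\sigma(a)$.

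Next I would reduce to the case where $a$ is diagonal. On each separable reducing subspace of $\h$, the Weyl--von Neumann--Berg theorem writes $a$ as a Hilbert--Schmidt small perturbation of a diagonal normal operator, and a norm-continuity argument based on Lemma \ref{le1}-type estimates allows passing to the limit, so we may assume $a=\mathrm{diag}(\alpha_j)$ in some orthonormal basis $(e_j)$ (with $b=\mathrm{diag}(f(\alpha_j))$). The nonseparable case reduces to the separable one by the decomposition used at the end of the proof of Theorem \ref{th3}.

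In this diagonal basis, $d_a$ and $d_b$ act on $\bh$ as Schur multipliers with symbols $\psi(i,j)=\alpha_i-\alpha_j$ and $\tilde\psi(i,j)=f(\alpha_i)-f(\alpha_j)$. For $x\in\matn{\bh}={\rm B}(\h^n)$, in the blown-up basis $\{e_j\otimes f_k\}$ of $\h^n$ the operators $a^{(n)},b^{(n)}$ remain diagonal (each eigenvalue of multiplicity $n$), and $d_{a^{(n)}},d_{b^{(n)}}$ act as Schur multipliers with the same symbols $\psi,\tilde\psi$ constantly extended in the new indices; equivalently, under the identification $\matn{\bh}=\matn{\bc}\otimes\bh$ we have $d_{a^{(n)}}=\mathrm{id}_{\matn{\bc}}\otimes d_a$. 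The hypothesis says that the Schur multiplier $\Phi$ with symbol $\tilde\psi/\psi$ (defined on $\mathrm{supp}(\psi)$ and zero elsewhere, which is consistent since $\alpha_i=\alpha_j$ forces $f(\alpha_i)=f(\alpha_j)$) is a contraction on the range of $M_\psi$. By the classical theorem that Schur multipliers are automatically completely bounded with CB-norm equal to operator norm, $\Phi$ is completely contractive on its domain, which yields $\|d_{b^{(n)}}(x)\|\leq\|d_{a^{(n)}}(x)\|$ for all $x\in\matn{\bh}$.

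The main obstacle will be the last step: the multiplier $\Phi$ is naturally defined on the operator subspace $M_\psi(\bh)\subseteq\bh$ rather than on all of $\bh$, so care is needed in applying the ``CB norm equals operator norm'' theorem in this quotient setting (alternatively, one could phrase it via double operator integrals, whose Schur/CB-norm is independent of the underlying Hilbert space). The diagonal-reduction step is also delicate in that the inequality must be preserved under the Voiculescu-type approximation, which is routine but uses the continuity estimates of Lemma \ref{le1}.
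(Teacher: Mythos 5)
Your first two steps (passing to the bidual to extend the inequality from $\kh$ to $\bh$, and deducing $b\in(a)^{\prime\prime}$) coincide with the paper's proof. The divergence after that is where the genuine gap lies, and it is exactly the obstacle you flag yourself: the multiplier $\Phi$ with symbol $\tilde\psi/\psi$ is only known to be a contraction on the submodule $M_\psi(\bh)=d_a(\bh)$, not on all of $\bh$, and the ``Schur multipliers are automatically completely bounded with CB-norm equal to operator norm'' theorem in its usual form applies to multipliers defined on all of $\bh$ (or all of $\matn{\bc}$). The natural way to reduce to that form is to compose $\Phi$ with the projection onto the off-block-diagonal part, i.e.\ with $I-E$ where $E$ is the conditional expectation onto $\{x: x_{ij}=0 \mbox{ unless } \alpha_i=\alpha_j\}$; but $I-E$ has Schur norm $2$, so you would only obtain $\|[b^{(n)},x]\|\leq 2\|[a^{(n)},x]\|$ (this is precisely the factor of $2$ that appears in the paper's Lemma \ref{le521}). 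The constant $1$ cannot be sacrificed: the application of this lemma in the proof of Theorem \ref{th42} needs the map $[a,x]\mapsto[b,x]$ to be completely \emph{isometric}. The correct tool is Smith's theorem \cite[2.1--2.3]{S}, which asserts that a bounded bimodule map over a suitable C$^*$-algebra, \emph{defined on any submodule} of $\bh$, is automatically completely bounded with the same norm. That is what the paper invokes --- and once you have it, the whole diagonalization detour is unnecessary: since $b\in(a)^{\prime\prime}$ and $\com{(a)}$ is a C$^*$-algebra by the Fuglede--Putnam theorem, the map $[a,x]\mapsto[b,x]$ is already a contractive homomorphism of $\com{(a)}$-bimodules from $d_a(\bh)$ to $d_b(\bh)$, and Smith's theorem finishes the proof in one line.

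There is also a secondary problem in your reduction to the diagonal case. From $b\in(a)^{\prime\prime}$ you only get $b=f(a)$ with $f$ bounded Borel, but the Weyl--von Neumann--Berg perturbation argument requires control of $\|f(a)-f(a_0)\|$ when $\|a-a_0\|$ is small, which needs $f$ at least continuous (so that $f$ can be uniformly approximated by polynomials in $z$ and $\bar z$); a Borel $f$ gives no such control. This is fixable within the paper's toolkit --- the hypothesis applied to rank-one projections together with Lemma \ref{le41} and Theorem \ref{th3} yields $b=f(a)$ with $f$ Lipschitz --- but as written the passage to a diagonal $a$ does not go through.
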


\begin{proof}Since $d_a=\bdu{(d_a|\kh)}$ (the second adjoint
in the Banach space sense), it follows from Remark \ref{re42} that (\ref{41}) holds for all $x\in\bh$. 

Suppose now that $a$ is normal and note that $\com{(a)}$ is a C$^*$-algebra by the 
Fuglede-Putnam theorem.
Since (\ref{41}) holds for all $x\in\bh$,  $b\in(a)^{\prime\prime}$.
Further, by (\ref{41}) the map $[a,x]\mapsto[b,x]$ is a contraction from $d_a(\bh)$ to $d_b(\bh)$. Clearly this map
is a homomorphism of $\com{(a)}$-bimodules, hence by \cite[2.1, 2.2, 2.3]{S} it is a complete contraction,
which is equivalent to the conclusion of the lemma.
\end{proof}

\begin{re}\label{re400}
We will use below the following well-known fact. Given $c_j,e_j\in\bh$,
 {\em an identity of the form $\sumjn c_jxe_j=0$, if it holds for all $x\in\bh$,
implies that all $c_j$ must be $0$ if the $e_j$ are linearly independent.
(See e. g. \cite[Theorem 5.1.7]{AM}).}
\end{re}

We refer to \cite{BLM} or \cite{Pa} for the definition of the injective envelope of an operator space used in the
following lemma.
\begin{lemma}\label{le43}Let $\crr=d_a(\bh)$ and let $\cs$ be the operator system
$$\cs=\left\{\left[\begin{array}{ll}
\lambda&y\\
z^*&\mu\end{array}\right]: \ \lambda,\mu\in\bc\; \ y,z\in\crr\right\}.$$
If $a$ does not satisfy any quadratic equation over $\bc$ then the C$^*$-algebra $C^*(\cs)$ generated by $\cs$ is irreducible and 
the injective envelope $I(\cs)$ of $\cs$ is $\mattwo{\bh}$.
\end{lemma}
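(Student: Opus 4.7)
The proof proceeds in two parts: first, the irreducibility of $C^*(\cs)$ via a commutant calculation built on Remark~\ref{re400}; second, the identification of the injective envelope by combining Arveson's boundary theorem with a multiplicative-domain argument.

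\textbf{Part 1} (\emph{Irreducibility}). Let $T\in\mathcal{B}(\h^2)=\mattwo{\bh}$ commute with $C^*(\cs)$. Commutation with the projection $p:=\bigl[\begin{smallmatrix}1&0\\0&0\end{smallmatrix}\bigr]\in\cs$ forces $T=\mathrm{diag}(T_{11},T_{22})$, and commutation with the off-diagonal generators $\bigl[\begin{smallmatrix}0&d_a(x)\\0&0\end{smallmatrix}\bigr]\in\cs$ yields
\[
T_{11}(ax-xa)=(ax-xa)T_{22}\qquad(x\in\bh).
\]
Expanding this as $T_{11}a\cdot x\cdot 1-a\cdot x\cdot T_{22}-T_{11}\cdot x\cdot a+1\cdot x\cdot aT_{22}=0$ and applying Remark~\ref{re400} (the nonzero left multiplier $1$ in the last term forces the right multipliers to be linearly dependent), the set $\{1,a,T_{22},aT_{22}\}$ is linearly dependent. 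Solving this dependence (in the nontrivial case, for $aT_{22}=p_0+p_1a+p_2T_{22}$ with scalars $p_j$) and substituting back yields
\[
(T_{11}a+p_0)\cdot x+(p_1-T_{11})\cdot x\cdot a+(p_2-a)\cdot x\cdot T_{22}=0;
\]
if $\{1,a,T_{22}\}$ were linearly independent, Remark~\ref{re400} would force $p_2-a=0$, contradicting that $a$ is not a scalar (a consequence of the no-quadratic hypothesis). Hence $T_{22}\in\bc 1+\bc a$, write $T_{22}=\lambda+\mu a$. One more substitution and regrouping by the right multipliers $\{1,a,a^2\}$ -- linearly independent because $a$ does not satisfy a quadratic -- gives via Remark~\ref{re400} that $\mu=0$ and $T_{11}=\lambda$, so $T=\lambda I_2$ and $C^*(\cs)$ is irreducible.

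\textbf{Part 2} (\emph{Injective envelope}). Since $a$ is not scalar, one can choose a unit $\xi\in\h$ with $\xi,a\xi$ linearly independent, so that $y:=d_a(\xi\otimes\xi^*)=a\xi\otimes\xi^*-\xi\otimes(a^*\xi)^*$ is a nonzero rank-two element of $d_a(\bh)$. Then
\[
\begin{bmatrix}0&y\\0&0\end{bmatrix}\begin{bmatrix}0&0\\y^*&0\end{bmatrix}=\begin{bmatrix}yy^*&0\\0&0\end{bmatrix}\in C^*(\cs)
\]
is a nonzero positive compact operator. By the standard result that an irreducible C$^*$-subalgebra of $\mathcal{B}(\mathcal{K})$ containing any nonzero compact operator contains all of the compacts, $\mathrm{K}(\h^2)\subseteq C^*(\cs)$. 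Moreover, $\bigl[\begin{smallmatrix}0&y\\0&0\end{smallmatrix}\bigr]\in\cs\cap\mathrm{K}(\h^2)$ is nonzero, so the quotient $C^*(\cs)\to C^*(\cs)/\mathrm{K}(\h^2)$ is not completely isometric on $\cs$.

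By Arveson's boundary theorem, the identity representation of $C^*(\cs)$ on $\h^2$ -- irreducible by Part 1, containing the compacts, and with non-isometric Calkin quotient on $\cs$ -- is a boundary representation for $\cs$. Consequently, any u.c.p.\ map $\phi:C^*(\cs)\to\mattwo{\bh}$ restricting to the identity on $\cs$ equals the identity on $C^*(\cs)$. Now let $\Phi:\mattwo{\bh}\to\mattwo{\bh}$ be any u.c.p.\ map with $\Phi|_\cs=\mathrm{id}$. Applying the boundary property to $\Phi|_{C^*(\cs)}$ gives $\Phi|_{C^*(\cs)}=\mathrm{id}$, and in particular $\Phi|_{\mathrm{K}(\h^2)}=\mathrm{id}$. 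Since $\Phi$ is a $*$-homomorphism on $\mathrm{K}(\h^2)$, every compact lies in the multiplicative domain of $\Phi$, so $\Phi$ is $\mathrm{K}(\h^2)$-bimodular: for every $T\in\mattwo{\bh}$ and every compact $k$ we have $k\Phi(T)=\Phi(kT)=kT$, hence $k(\Phi(T)-T)=0$ for all compact $k$, which forces $\Phi(T)=T$. Thus every u.c.p.\ endomorphism of $\mattwo{\bh}$ fixing $\cs$ is the identity, and since $\mattwo{\bh}$ is itself injective, Hamana's rigidity characterization yields $I(\cs)=\mattwo{\bh}$. The main obstacle is Part 1, where the scalar conclusion emerges only after iteratively unwinding the linear-dependence relations produced by Remark~\ref{re400} and invoking the no-quadratic hypothesis at exactly the right stage.
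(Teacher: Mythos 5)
Your proof is correct and follows essentially the same route as the paper: irreducibility via the same commutant computation, iterating the linear-dependence relations supplied by Remark \ref{re400} and invoking the no-quadratic hypothesis at the same stages, and the identification of the injective envelope via Arveson's boundary theorem together with the fact that an irreducible C$^*$-algebra containing a nonzero compact operator contains all of ${\rm K}(\h^2)$. The only cosmetic difference is the final step, where you verify rigidity of $\mattwo{\bh}$ directly by a multiplicative-domain argument, whereas the paper instead cites $I(\mattwo{\kh})=\mattwo{\bh}$; both are valid.
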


\begin{proof}Since $\cs$ contains the diagonal $2\times 2$ matrices with scalar entries, each element of $\com{S}$ (the commutant of $\cs$) is a block diagonal
matrix, that is, of the form $c\oplus e$, where $c,e\in\bh$. To prove the irreducibility of $C^*(\cs)$ means to prove that
each selfadjoint such element $c\oplus e$ is a scalar multiple of $1$. Since $c\oplus e$ commutes with elements of $\cs$, we
have that $cy=ye$ for all $y\in\crr$. Setting $y=ax-xa$ in the last identity we obtain
\begin{equation}\label{401}cax-cxa-axe+xae=0\ \ \mbox{for all}\ x\in\bh.\end{equation}
Since in (\ref{401}) the left
coefficients $ca,-c,-a$ and $1$ are not all $0$, it follows that $1,a,e,ae$ are linearly dependent. Thus, if $1,a$ and $e$ are linearly
independent, then $ae=\alpha1+\beta a+\gamma e$ for some scalars $\alpha,\beta,\gamma\in\bc$. Using this, we may rearrange
(\ref{401}) into
\begin{equation}\label{402}(ca+\alpha1)x+(\beta1-c)xa+(\gamma1-a)xe=0.\end{equation}
If $1,a$ and $e$ were linearly independent, then (\ref{402}) would imply that $a=\gamma1$, but this would be in contradiction with
the assumption about $a$. Hence  $1, a$ and $e$ are linearly dependent, say $e=\alpha1+\beta a$ ($\alpha,\beta\in\bc$). Then
(\ref{401}) can be rewritten as
\begin{equation}\label{404}(c-\alpha1)ax+(\alpha1-\beta a-c)xa+\beta xa^2=0.\end{equation}
Since $1,a$ and $a^2$ are linearly independent by assumption, we infer from (\ref{404}) that $\beta=0$ and $c=\alpha1$. But then $e=\alpha1$ and $c\oplus e=\alpha(1\oplus1)$. This proves the irreducibility of $C^*(\cs)$.

Since $\cs$ contains nonzero compact operators, the identity map on $\cs$ has a unique completely positive extension to
$C^*(\cs)$ by the Arveson boundary theorem \cite{Ar1}, which implies that $C^*(\cs)\subseteq I(\cs)$. (Otherwise a projection
$\bh\to I(\cs)$ restricted to $C^*(\cs)$ would be a completely positive extension of $id_{\cs}$, different from $id_{C^*(\cs)}$.)
But since $C^*(\cs)$ is irreducible and contains nonzero compact operators, it follows that $C^*(\cs)\supseteq\mattwo{\kh}$,
hence $I(\cs)$ must contain the injective envelope $I(\mattwo{\kh})$, which is known to 
be $\mattwo{\bh}$ \cite{BLM}.
\end{proof}

\begin{proof}[Proof of Theorem \ref{th42}]If $a$ (or $b$) is a scalar multiple of $1$
the proof is easy, so we assume from now on that this is not the case. If $a$ satisfies
a quadratic equation of the form
$$a^2+\beta a+\gamma1=0\ \ (\beta,\gamma\in\bc),$$
then each element of  $(a)^{\prime\prime}$ is a polynomial in $a$ (this holds for
any algebraic operator $a$ by \cite{Tu}), hence in particular $b$ is a linear polynomial in
$a$, say $b=\sigma a+\lambda1$. Then the condition (\ref{400}) obviously implies that
$|\sigma|=1$. Hence we may assume that $a$ does not satisfy any quadratic equation over
$\bc$. By Lemma \ref{le41} the assumption (\ref{400}) 
implies that $\max\{\dxb,\dxbb\}=\max\{\dxa,\dxaa\}$ for all unit vectors $\xi\in\h$, hence for each non-zero $\xi\in\h$ at least one of the following four equalities hold:
\begin{equation}\label{04}\dxb=\dxa,\ \dxb=\dxaa,\ \dxbb=\dxa,\ \dxbb=\dxaa.\end{equation}
Since the functions of the form $\h\ni\xi\mapsto\|\xi\|^2\dxa$ are continuous, it follows that $\h$ is the union of four closed sets $F_i$, where $F_1=\{\xi\in\h;\, \|\xi\|^2\dxb=\|\xi\|^2\dxa\}$ and so on.
By Bair's theorem at least one of the sets $F_i$ has nonempty interior and then, since functions of the form $\xi\mapsto\|\xi\|^2\dxa$ are polynomial  (more precisely, for any fixed vectors $\xi,\eta$ the function $z\mapsto \|\xi+z\eta\|^2D_{\xi+z\eta}$ is a polynomial in $z$ and $\overline{z}$), at least one of the equalities (\ref{04}) must hold for all nonzero $\xi\in\h$. In each case it follows then by Theorem \ref{th1} that  
$b$ must have the form $b=\sigma a+\lambda1$ or $b=\sigma a^*+\lambda1$, where $|\sigma|=1$. Moreover, in the second case, which we assume from now on (otherwise the proof is already completed), we deduce now from (\ref{400}) that $\|[a^*,x]\|=\|[a,x]\|$ for all $x\in\bh$, hence (setting $x=a$) $a$ must be normal.
Replacing $b$ by
$\alpha b+\beta$ for suitable $\alpha,\beta\in\bc$ we may assume without loss of
generality that $b=a^*$.

Denote by $\crr_a$ and $\crr_b$ the ranges of the 
derivations $d_a$ and $d_b$
and by $\cs_a$ and $\cs_b$ the corresponding operator systems (as in Lemma \ref{le43}). 
Since $a$ is normal, by Lemma \ref{le42} the map 
$$\phi:\crr_a\to\crr_b,\ \ \phi([a,x]):=[b,x]\ (x\in\bh)$$
is completely contractive and the same holds for its inverse. Hence $\phi$ is
completely isometric and consequently the map
$$\Phi:\cs_a\to\cs_b,\ \ \Phi\left(\left[\begin{array}{cc}
\alpha&y\\
z^*&\beta\end{array}\right]\right):=\left[\begin{array}{cc}
\alpha&\phi(y)\\
\phi(z)^*&\beta\end{array}\right]$$
is completely positive with completely positive inverse, hence also completely isometric
(see \cite{Pa}). But then $\Phi$ extends to a complete isometry $\psi$ between the injective
envelopes $I(\cs_a)$ and $I(\cs_b)$ (since both $\Phi$ and $\Phi^{-1}$ extend to
complete contractions which must be each other's inverse by rigidity). 
Since $a$ (and $b=a^*$) does not satisfy any quadratic equation over $\bc$, these 
injective envelopes are both
$\mattwo{\bh}$ by Lemma \ref{le43}. Hence $\psi$ is a unital surjective complete
isometry of $\mattwo{\bh}={\rm B}(\h^2)$. Thus by \cite[4.5.13]{BLM} or \cite[Ex. 7.6.18]{KR}
(and since all automorphisms of ${\rm B}(\h^2)$ are inner) $\psi$ is necessarily of the form 
$$\psi(y)=w^*yw\ \ (y\in{\rm B}(\h^2)),$$
where $w\in{\rm B}(\h^2)$ is unitary. Since by definition $\psi$ fixes the projections of 
$\h^2$ on the two summands, $w$ must commute with these two projections (by the
multiplicative domain argument, see \cite[p. 38]{Pa}), consequently $w$ is of the form 
$w=u\oplus v$ for unitaries $u,v\in\bh$. It follows now from the definition of
$\psi$ that $\phi$  is  of the form 
$$\phi(y)=uyv\ \ (y\in\crr_a),$$
that is $\phi([a,x])=u[a,x]v$. Hence $u[a,x]v=[b,x]$ for all $x\in\bh$, which can
be rewritten as
\begin{equation}\label{405}uaxv-uxav-bx+xb=0\ \ (x\in\bh).\end{equation}
Thus by Remark \ref{re400} we see from (\ref{405}) that $v,av,1,$ and $b$ are linearly dependent. Hence, if $1$, $v$ and $b$
are linearly independent, then $av=\alpha1+\beta b+\gamma v$, where 
$\alpha,\beta\,\gamma\in\bc$, and (\ref{405}) can be rewritten as
$$(ua-\gamma u)xv-(\alpha u+b)x+(1-\beta u)xb=0.$$
But by Remark \ref{re400} this implies in particular that $ua-\gamma u=0$, hence
$a=\gamma 1$, a possibility which we have excluded in the first paragraph of this proof. So we may assume that
$1,v$ and $b$ are linearly dependent. If $v$ were a scalar, say $v=\delta$, then (\ref{405}) could be rewritten as $(\delta ua-b)x-\delta uxa+xb=0$, which would imply that $1$, $a$ and $b$ are linearly dependent, a possibility already taken care of in the beginning of the proof. Thus we may assume that $v$ is not a scalar. Hence $b=\alpha1+\beta v$
for suitable $\alpha,\beta\in\bc$. Since $v$ is unitary and $a=b^*$, this concludes the proof.
\end{proof}

To extend Theorem \ref{th42} to C$^*$-algebras we need a lemma.

\begin{lemma}\label{le508}Let $A\subseteq\bh$ be a C$^*$-algebra,  $J$ a closed ideal in $A$, and let $a,b\in A$ 
satisfy $\|[b,x]\|\leq\|[a,x]\|$ for all $x\in A$. Then the same inequality holds for all $x\in\weakc{A}$ and also
for all cosets $\dot{x}\in A/J$.
\end{lemma}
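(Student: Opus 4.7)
The plan is to lift the hypothesis from $A$ to its enveloping W$^*$-algebra $\bdu{A}$ via Remark \ref{re42}, and then to extract each of the two conclusions by projecting onto the appropriate central summand of $\bdu{A}$.

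First I would apply Remark \ref{re42} to the bounded linear maps $d_a|_A,d_b|_A:A\to\bh$ to obtain the inequality $\|\bdu{(d_b|_A)}(v)\|\leq\|\bdu{(d_a|_A)}(v)\|$ for every $v\in\bdu{A}$. Next I would identify this bidual with a commutator inside the von Neumann algebra $\bdu{A}$. Writing $d_a|_A=\iota\circ\delta_a$, where $\delta_a:A\to A$, $\delta_a(x)=ax-xa$, and $\iota:A\hookrightarrow\bh$ is the inclusion, both $\bdu{\delta_a}$ and the derivation $v\mapsto av-va$ on $\bdu{A}$ are weak$^*$-continuous extensions of $\delta_a$ from $A$ (the latter because multiplication by $a\in\bdu{A}$ is normal), hence they coincide on $\bdu{A}$. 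Since $\bdu\iota$ is isometric (the Banach-space bidual of an isometric embedding), the inequality takes the clean form
$$\|bv-vb\|_{\bdu{A}}\leq\|av-va\|_{\bdu{A}}\quad\mbox{for all }v\in\bdu{A}.$$

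For the weak$^*$-closure statement, I would invoke the universal property of the bidual to produce a normal $^*$-homomorphism $\tilde\iota:\bdu{A}\to\weakc{A}$ extending the inclusion, whose kernel has the form $(1-p)\bdu{A}$ for a central projection $p\in\bdu{A}$, with $\tilde\iota|_{p\bdu{A}}$ an isometric $^*$-isomorphism onto $\weakc{A}$. Given $y\in\weakc A$, I would lift $y$ to $v\in p\bdu{A}$; centrality of $p$ then places $av-va$ and $bv-vb$ into $p\bdu{A}$, where their $\bdu{A}$-norms agree with $\|[a,y]\|$ and $\|[b,y]\|$ respectively, so the bidual inequality collapses to the desired inequality in $\bh$.

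The quotient case proceeds in parallel using the normal surjection $\tilde\sigma:\bdu{A}\to\bdu{(A/J)}$ extending the quotient map: its kernel has the form $(1-q)\bdu{A}$ for a central projection $q$, giving an isometric $^*$-isomorphism $q\bdu{A}\cong\bdu{(A/J)}$ that sends $qx$ to $\dot x$ for $x\in A$. Taking $v=qx$ and using centrality of $q$ to write $av-va=q[a,x]$, the commutator transfers under $\tilde\sigma$ to $[\dot a,\dot x]$ with matching norm, and similarly for $b$, so the bidual inequality yields $\|[\dot b,\dot x]\|\leq\|[\dot a,\dot x]\|$ in $A/J$.

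The main hurdle is the identification in the first step: the map $\bdu{(d_a|_A)}$ a priori lives in the much larger space $\bdu{\bh}$, and one must verify that its norm can nonetheless be read off inside $\bdu{A}$ as the commutator norm $\|av-va\|$ of the ambient W$^*$-algebra. After that point, the argument reduces to careful bookkeeping with the central projections $p$ and $q$.
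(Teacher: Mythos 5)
Your proof is correct, but it takes a partly different route from the paper's. For the weak\textsuperscript{*}-closure statement both arguments begin the same way: lift the hypothesis to $\bdu{A}$ via Remark \ref{re42} and identify $\bdu{(d_a|_A)}$ with the inner derivation $v\mapsto av-va$ on $\bdu{A}$ (your verification of that identification, via weak\textsuperscript{*}-continuity of both extensions and isometry of $\bdu{\iota}$, is exactly the point that needs checking and you handle it correctly). The divergence is in how each conclusion is then extracted. The paper proves the quotient statement $A/J$ first, \emph{directly} and without biduals, using a quasicentral approximate unit $(e_k)$ for $J$: from $\|[a,e_k]\|,\|[b,e_k]\|\to0$ and $\|\dot y\|=\lim_k\|y(1-e_k)\|$ one gets $\|[\dot b,\dot x]\|=\lim_k\|[b,x(1-e_k)]\|\leq\lim_k\|[a,x(1-e_k)]\|=\|[\dot a,\dot x]\|$; it then deduces the weak\textsuperscript{*}-closure statement by observing that $\weakc{A}$ is a quotient of $\bdu{A}$ and reapplying that same quotient argument. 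You instead run everything through the bidual and the structure of weak\textsuperscript{*}-closed ideals: $\weakc{A}\cong p\bdu{A}$ and $\bdu{(A/J)}\cong q\bdu{A}$ for central projections $p,q$, with the commutators transported isometrically by centrality. Your route is more uniform and self-contained (one mechanism for both conclusions), at the cost of invoking the central-projection description of weak\textsuperscript{*}-closed ideals in $\bdu{A}$; the paper's route keeps the quotient case elementary (no biduals, just Arveson's quasicentral approximate units) and reuses it. Both are complete proofs.
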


\begin{proof}The statement about the quotient was observed already in \cite[Proof of 5.4]{BMS} and follows from the 
existence of a quasicentral approximate unit $(e_k)$ in $J$
\cite{Ar}. Namely, the conditions $\|[a,e_k]\|,\ \|[b,e_k]\|\to 0$ (from the definition of the quasicentral approximate 
unit) and the 
well-known property that $\|\dot{y}\|=\lim_k\|y(1-e_k)\|$ ($y\in A$) imply that 
$$\|[\dot{b},\dot{x}]\|=\lim_k\|[b,x](1-e_k)\|=\lim_k\|[b,x(1-e_k)]\leq\lim_k\|[a,x(1-e_k)]\|=\|[\dot{a},\dot{x}]\|.$$

Let $\bdu{A}$ be the universal von Neumann envelope of $A$ (= bidual of $A$) and regard $A$ as a subalgebra in
$\bdu{A}$ in the usual way. Since $\bdu{d_a}$ is just the derivation induced by $a$ on $\bdu{A}$, it follows from
Remark \ref{re42} that the condition $\|[b,x]\|\leq\|[a,x]\|$ holds for all $x\in\bdu{A}$. Since $\weakc{A}$ is
a quotient of $\bdu{A}$, it follows from the previous paragraph (applied to $\bdu{A}$ instead of $A$) that the
condition holds also in $\weakc{A}$.
\end{proof}

\begin{co}If $A$ is a C$^*$-algebra and $a,b\in A$ are such that 
$\|[b,x]\|=\|[a,x]\|$ for all $x\in A,$ 
then there exist a  projection $p$ in the center $Z$ of $\weakc{A}$ and elements $s,d\in Zp$ with $s$ unitary, and
$u,v,c,g,h\in Z\ort{p}$ with $u,v$ unitary, such that $bp=sa+d$ and $a\ort{p}=cu^*+g$, $b\ort{p}=vcu+h$.
\end{co}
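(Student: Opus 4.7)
The plan is to follow the strategy of Corollary \ref{co11}, substituting Theorem \ref{th42} for Theorem \ref{th1}. First, by Lemma \ref{le508}, the equality $\|[b,x]\|=\|[a,x]\|$ extends from $x\in A$ to all $x\in R:=\weakc{A}$, so we may assume $A=R$. With $Z$ the center of $R$, $\Delta$ its maximal ideal space, and $R(t):=R/(Rt)$ the primitive fibers (primitivity by Halpern, as used in Corollary \ref{co11}), a further application of Lemma \ref{le508} to the closed ideal $Rt$ yields the same equality on each $R(t)$. Realizing $R(t)$ as an irreducible subalgebra of some ${\rm B}(\h_t)$ and invoking Remark \ref{re42} promotes the equality to all of ${\rm B}(\h_t)$, so Theorem \ref{th42} applies in each fiber.

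This gives, at each $t\in\Delta$, one of two alternatives: \emph{(i)} $b(t)=\sigma(t)a(t)+\lambda(t)1$ with $|\sigma(t)|=1$, or \emph{(ii)} there exist a unitary $w(t)\in R(t)$ and scalars $\alpha(t),\beta(t),\lambda(t),\mu(t)$ with $|\alpha(t)|=|\beta(t)|$ such that $a(t)=\alpha(t)w(t)^*+\lambda(t)1$ and $b(t)=\beta(t)w(t)+\mu(t)1$. Let $F_1,F_2\subseteq\Delta$ be the corresponding subsets; continuity of $t\mapsto\|x(t)\|$ for $x\in R$ implies, exactly as in Corollary \ref{co11}, that both are closed. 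Let $p\in Z$ correspond to the clopen set $F:=\stackrel{\circ}{F}_1$. On $Zp$ case (i) holds, and the central module-map argument from Corollary \ref{co11}---choose positive $x,y\in R$ with $xy=0$ and $(xay)(t_0)\ne 0$ at a chosen $t_0\in F$ where $a(t_0)$ is nonscalar, then apply a $Z$-valued module retraction to the identity $xby=\sigma\cdot xay$---delivers continuity of $\sigma$ and $\lambda$ on the relevant open set, extended by Stoneanness to a unitary $s\in Zp$ and $d\in Zp$ satisfying $bp=sa+d$.

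On $\ort{p}$ we are in case (ii), and the same continuity argument produces central elements $g=\lambda\ort{p}$, $h=\mu\ort{p}\in Z\ort{p}$ together with a nonnegative $c\in Z\ort{p}$ representing the common modulus $|\alpha|=|\beta|$. Since $(a\ort{p}-g)^*(a\ort{p}-g)=c^2$ is central, the polar decomposition of $a\ort{p}-g$ in the von Neumann algebra $R\ort{p}$ is unambiguous and yields $a\ort{p}-g=cu^*$ for a partial isometry $u\in R\ort{p}$, extended to a unitary by using the clopen decomposition of $\ort{p}$ determined by the zero set of $c$. The fiberwise identity $\beta(t)w(t)=v(t)\bar\alpha(t)u(t)$, where $v(t):=\beta(t)/\bar\alpha(t)$ is a unit scalar, then forces $b\ort{p}-h=vcu$ for the corresponding unit-scalar central element $v\in Z\ort{p}$. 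The chief obstacle is this coherent gluing of the fiberwise unitaries $w(t)$ into a global element of $R\ort{p}$: centrality of $c$ is exactly what makes the polar decomposition in $R\ort{p}$ globally well-behaved, while Stoneanness of $\Delta$ and the continuity of $t\mapsto\|x(t)\|$ (following \cite{G}) allow us to patch the residual scalar phase $v(t)$ into a genuine central unit scalar $v\in Z\ort{p}$, completing the construction.
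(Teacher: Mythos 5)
Your proposal is correct and follows essentially the route the paper intends: Lemma \ref{le508} reduces everything to the von Neumann algebra $\weakc{A}$ and to its primitive fibers $R(t)$, Theorem \ref{th42} is applied in each fiber, and the gluing over the Stonean spectrum of $Z$ is carried out exactly as in Corollary \ref{co11} (the paper states this reduction and omits the details you supply; your continuity and polar-decomposition arguments for case (ii) fill the gap soundly, noting only that the passage from $R(t)$ to ${\rm B}(\h_t)$ is really another application of Lemma \ref{le508} rather than of Remark \ref{re42} alone). Your reading of $u$ as a unitary of $R\ort{p}$ rather than of $Z\ort{p}$ is the correct interpretation of the statement, consistent with Theorem \ref{th42}.
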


\begin{proof}If $A$ is primitive the corollary follows immediately from Theorem \ref{th42} and Lemma \ref{le508}
since $\weakc{A}=\bh$ if $A$ is irreducibly represented on $\h$. In general, Lemma \ref{le508} reduces the proof
to von Neumann algebras, where the arguments are similar as in the proof of Corollary \ref{co11}, so we will omit
the details.
\end{proof}

\begin{co}If $\|[b,x]\|\leq\|[a,x]\|$ for all $x\in A$ then $\max\{\dob,\dobb\}\leq\max\{\doa,\doaa\}$ for all pure states $\omega$ on $A$.
\end{co}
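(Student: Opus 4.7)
The plan is to reduce the pure-state inequality to a vector-state inequality inside the GNS representation, where Lemma~\ref{le41} converts it into the desired variance estimate. Fix a pure state $\omega$ on $A$ and let $(\pi,\h_\pi,\xi)$ be its GNS representation; purity of $\omega$ forces $\pi$ to be irreducible, and $\omega(c)=\inner{\pi(c)\xi}{\xi}$ for all $c\in A$.

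The first step is to transport the hypothesis from $A$ onto all of $\bhp$. Let $J=\ker\pi$ and identify $A/J$ isometrically with $\pi(A)\subseteq\bhp$ via the $*$-isomorphism induced by $\pi$. The quotient part of Lemma~\ref{le508} then yields $\|[\pi(b),\pi(x)]\|\le\|[\pi(a),\pi(x)]\|$ for every $x\in A$. Applying the weak*-closure part of the same lemma to the C$^*$-algebra $\pi(A)\subseteq\bhp$, and using irreducibility (so that $\weakc{\pi(A)}=\pi(A)^{\prime\prime}=\bhp$), the inequality extends to
$$\|[\pi(b),y]\|\le\|[\pi(a),y]\|\quad(\forall\, y\in\bhp).$$

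Finally, specialize to $y=\xixi$. By Lemma~\ref{le41}, the squares of the two sides are exactly $\dis{\xi}{\pi(b)}$ and $\dis{\xi}{\pi(a)}$, which in turn equal $\dob$ and $\doa$ respectively, since $\omega(c^*c)=\|\pi(c)\xi\|^2$ and $\omega(c)=\inner{\pi(c)\xi}{\xi}$. I do not expect any genuine obstacle here: the corollary is essentially the reassembly of Lemma~\ref{le41} (which represents the variance as the norm of a derivation on a rank-one projection) with Lemma~\ref{le508} (which supplies precisely the quotient and weak*-closure passages needed to make $\xixi$ admissible as a test element in the transported hypothesis). The only point to be slightly careful about is the double invocation of Lemma~\ref{le508}---first with $J=\ker\pi$ to pass into the quotient, and then with $\pi(A)$ playing the role of ``$A$'' in order to weak*-close it up to $\bhp$.
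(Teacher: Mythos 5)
Your argument is correct and follows the same route as the paper: pass to the irreducible GNS representation, use Lemma~\ref{le508} (quotient and weak*-closure parts) to transport the commutator inequality to all of $\bhp$, and then apply Lemma~\ref{le41} with the test element $\xixi$. The paper states this in one line, and your write-up is simply a fuller account of the same steps, including the correct identification $\dob=\dis{\xi}{\pi(b)}$ via the GNS cyclic vector.
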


\begin{proof}If $\pi:A\to\bhp$ is the irreducible representation  obtained from $\omega$ by the GNS construction,
then $\weakc{\pi(A)}=\bhp$, hence the corollary follows from Lemmas \ref{le508} and \ref{le41}.
\end{proof}

\section{An inequality between norms of commutators}

In this section we study the inequality 
\begin{equation}\label{50}\|[b,x]\|\leq\kappa\|[a,x]\|\ \ (\forall x\in\bh),\end{equation}
where $a,b\in\bh$ are fixed and $\kappa$ is a constant. For a normal $a$ it is proved in \cite{JW} that (\ref{50})
holds (for some $\kappa$) if and only if 

\begin{equation}\label{51}d_b(\bh)\subseteq d_a(\bh).\end{equation}

That for normal $a$ (\ref{50}) implies (\ref{51}) can be easily proved as follows. We have seen in the
proof of Lemma \ref{le42} that for normal $a$ the condition (\ref{50}) is equivalent to the fact that the map
$$d_a(x)\mapsto d_b(x)\ \ (x\in\bh)$$
is a completely bounded homomorphism of $\com{(a)}$-bimodules $d_a(\bh)\to d_b(\bh)$. Then this map can be extended
to a completely bounded $\com{(a)}$-bimodule endomorphism $\phi$ of $\bh$ by the Wittstock theorem (see \cite[3.6.2]{BLM}), hence we have
$$d_b(x)=\phi(d_a(x))=d_a(\phi(x))\ \ (x\in\bh).$$

When studying the connection between (\ref{50}) and (\ref{51}), it is useful to have in mind a fact (recalled below as Lemma \ref{le51})  concerning 
operators in ${\rm B}(X,Y)$, the space of all bounded linear operators
from $X$ into $Y$, where $X$ and $Y$ are Banch spaces. Denote by $\du{X}$ the dual of $X$ and by $\du{T}$ the adjoint of
$T\in{\rm B}(X,Y)$. The following is well-known (see  \cite{JW}).

\begin{lemma}\label{le51}Given $S,T\in {\rm B}(X,Y)$, the inclusion $\du{T}(\du{Y})\subseteq\du{S}(\du{Y})$ holds if and 
only if there exists a constant
$\kappa$ such that
\begin{equation}\label{61}\|T\xi\|\leq\kappa\|S\xi\|\end{equation}
for all $\xi\in X$.
\end{lemma}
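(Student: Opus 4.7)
The plan is to prove the two implications separately by classical Banach space duality arguments. Both directions rest on the standard Hahn--Banach/closed-graph toolkit, and the result is essentially Douglas's factorization lemma in its Banach space form.

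For the easy direction $(\Leftarrow)$, I would use the inequality to define the linear map $Q:S(X)\to Y$ sending $S\xi$ to $T\xi$. It is well-defined (if $S\xi=0$ then $T\xi=0$) and bounded of norm $\leq\kappa$, so it extends by continuity to $\overline{S(X)}$; this gives the factorization $T=QS$. For any $\phi\in\du{Y}$ the functional $\phi\circ Q$ lives on $\overline{S(X)}$, and Hahn--Banach yields $\psi\in\du{Y}$ with $\psi|_{\overline{S(X)}}=\phi\circ Q$. A direct computation gives
$$(\du{S}\psi)(\xi)=\psi(S\xi)=\phi(QS\xi)=\phi(T\xi)=(\du{T}\phi)(\xi)\qquad(\xi\in X),$$
hence $\du{T}\phi=\du{S}\psi\in\du{S}(\du{Y})$, proving the range inclusion.

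For the harder direction $(\Rightarrow)$, I would apply the closed graph theorem. Let $N=\ker\du{S}$ and denote by $\bar{S}:\du{Y}/N\to\du{X}$ the induced injective bounded map. The hypothesis $\du{T}(\du{Y})\subseteq\du{S}(\du{Y})$ lets me define $W:\du{Y}\to\du{Y}/N$ by letting $W\phi$ be the unique preimage of $\du{T}\phi$ under $\bar{S}$. To verify that $W$ has closed graph, suppose $\phi_n\to\phi$ in $\du{Y}$ and $W\phi_n\to[\psi]$ in $\du{Y}/N$; then $\du{T}\phi_n\to\du{T}\phi$ by continuity of $\du{T}$, while $\du{T}\phi_n=\bar{S}(W\phi_n)\to\bar{S}[\psi]=\du{S}\psi$, forcing $\du{T}\phi=\du{S}\psi$, that is $W\phi=[\psi]$. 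The closed graph theorem then gives a constant $C$ with $\|W\phi\|\leq C\|\phi\|$ for all $\phi$, so for each $\phi$ one can choose a representative $\psi\in\du{Y}$ with $\du{T}\phi=\du{S}\psi$ and $\|\psi\|\leq 2C\|\phi\|$. For any $\xi\in X$ this yields
$$|\phi(T\xi)|=|(\du{T}\phi)(\xi)|=|(\du{S}\psi)(\xi)|=|\psi(S\xi)|\leq 2C\|\phi\|\,\|S\xi\|,$$
and taking the supremum over $\|\phi\|=1$ produces $\|T\xi\|\leq 2C\|S\xi\|$, which is the desired inequality with $\kappa=2C$.

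The main obstacle is the $(\Rightarrow)$ direction, where the purely set-theoretic range inclusion has to be upgraded to a uniform norm estimate; the closed graph theorem applied to $W$ is the essential ingredient, and the small extra step of selecting a representative $\psi$ with controlled norm (losing a harmless factor of $2$) is what allows the passage from the inclusion on duals back to the norm inequality on $X$.
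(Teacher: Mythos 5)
Your proof is correct and complete: the $(\Leftarrow)$ direction via the factorization $T=QS$ plus Hahn--Banach, and the $(\Rightarrow)$ direction via the closed graph theorem applied to the induced map into $\du{Y}/\ker\du{S}$ (with the harmless factor of $2$ from choosing a near-optimal coset representative), are both sound. The paper itself gives no proof of this lemma, merely calling it well known and citing Johnson--Williams; your argument is the standard one underlying that reference, so there is nothing to contrast.
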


Since $d_a=-\du{(d_a|\trh)}$, where $\trh$ is the ideal in $\bh$ of trace class 
operators, the following is just a special case of Lemma \ref{le51}.

\begin{co}\label{co52}Let $a,b\in\bh$. 

(i) The inclusion $d_b(\bh)\subseteq d_a(\bh)$ holds if and only if there exists a constant $\kappa$ such that $\|d_b(t)\|_1\leq\kappa\|d_a(t)\|_1$
for all $t\in\trh$.

(ii) The inclusion $d_b(\trh)\subseteq d_a(\trh)$ is equivalent to the existence of a constant $\kappa$ such that $\|d_b(x)\|\leq\kappa
\|d_a(x)\|$ for all $x\in\kh$ or (equivalently, by Lemma \ref{le508}) for all $x\in\bh$.
\end{co}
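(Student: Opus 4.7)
The plan is to apply Lemma \ref{le51} in two different dualities, so the proof is mostly a bookkeeping exercise once the correct adjoint identifications are in place. The key tool is the standard trace pairing identification $\du{\trh}\cong\bh$ given by $x\mapsto(t\mapsto{\rm tr}(xt))$, together with the analogous identification $\du{\kh}\cong\trh$. A short computation using the cyclic property of the trace,
$${\rm tr}(x[a,t])={\rm tr}(xat)-{\rm tr}(xta)=-{\rm tr}([a,x]t),$$
shows that the Banach space adjoint of $d_a|\trh:\trh\to\trh$ is $-d_a:\bh\to\bh$, and similarly the adjoint of $d_a|\kh:\kh\to\kh$ is $-d_a:\trh\to\trh$. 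In both cases the image of the adjoint coincides with the range of the derivation acting on the dual space (the sign being harmless).

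For part (i), I would take $X=Y=\trh$ in Lemma \ref{le51} with $S=d_a|\trh$ and $T=d_b|\trh$. Then $\du{S}(\du{Y})=d_a(\bh)$ and $\du{T}(\du{Y})=d_b(\bh)$, so Lemma \ref{le51} gives the equivalence between the range inclusion $d_b(\bh)\subseteq d_a(\bh)$ and the existence of a constant $\kappa$ with $\|d_b(t)\|\leq\kappa\|d_a(t)\|$ for all $t\in\trh$.

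For part (ii), I would take $X=Y=\kh$ with $S=d_a|\kh$ and $T=d_b|\kh$, so now $\du{S}(\du{Y})=d_a(\trh)$ and $\du{T}(\du{Y})=d_b(\trh)$. Lemma \ref{le51} then yields the equivalence of $d_b(\trh)\subseteq d_a(\trh)$ with $\|d_b(x)\|\leq\kappa\|d_a(x)\|$ for all $x\in\kh$. The further equivalence with the same inequality on all of $\bh$ follows from Remark \ref{re42} applied to the double adjoint $d_a=\bdu{(d_a|\kh)}$, or equivalently from Lemma \ref{le508}.

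There is no essential obstacle; the argument is entirely formal. The only care needed is to track the minus sign in the adjoint computation and to verify that the image of the Banach space adjoint on the full dual space really equals $d_a(\bh)$, respectively $d_a(\trh)$, which is exactly the content of the identities $d_a=-\du{(d_a|\trh)}$ and $d_a|\trh=-\du{(d_a|\kh)}$ noted immediately before the statement.
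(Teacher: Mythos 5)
Your proposal is correct and follows exactly the route the paper intends: the paper's own justification is the one-line remark that $d_a=-\du{(d_a|\trh)}$ makes the corollary a special case of Lemma \ref{le51}, and you have simply filled in the trace-pairing adjoint computations and the two choices of duality ($\du{\trh}\cong\bh$ for (i), $\du{\kh}\cong\trh$ for (ii)), plus the passage from $\kh$ to $\bh$ via Remark \ref{re42}, all as the paper does. The only point worth making explicit is that in part (i) the inequality $\|d_b(t)\|\leq\kappa\|d_a(t)\|$ must be read in the trace norm on $\trh$ for the duality in Lemma \ref{le51} to apply.
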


If $a$ is not normal, then the range inclusion (\ref{51}) does not necessarily imply that $b\in(a)^{\prime\prime}$
\cite{Ho}, hence it does not imply (\ref{50}). But we will prove that conversely 
(\ref{50}) implies (\ref{51}), if $a$ satisfies certain conditions which are more general than
normality. 

\begin{pr}\label{le53}Denote $\crr_a:=d_a(\bh)$.  
If  $\weakc{\crr_a}+\com{(a)}=\bh$, then for each $b\in\bh$ the condition (\ref{50})
implies that $\crr_b\subseteq\crr_a$. Moreover, if $\weakc{\crr_a}=\bh$, then there exists
a weak* continuous $\com{(a)}$-bimodule map $\phi$ on $\bh$ such that $d_b=\phi d_a=d_a\phi$.
\end{pr}

\begin{proof} By (\ref{50})  the correspondence $d_a(x)\mapsto d_b(x)$
extends to a bounded map $\phi_0$ from  $\normc{\crr_a}$ 
into $\normc{\crr_b}$ such that $\phi_0d_a=d_b$. Note that $\phi_0(d_a(\kh))\subseteq 
d_b(\kh)$. Recall that for normed spaces $Y\subseteq Z$ the weak* closure $\weakc{Y}$ of $Y$ in $\bdu{Z}$ can be naturally identified with $\bdu{Y}$, hence in particular $d_a(\kh)^{\sharp\sharp}=\weakc{d_a(\kh)}$  inside $\kh^{\sharp\sharp}=\bh$. It follows that $\phi:=(\phi_0|d_a(\kh))^{\sharp\sharp}$ is the weak*
continuous extension of $\phi_0$ to $\weakc{d_a(\bh)}=\weakc{d_a(\kh)}$ satisfying $\phi d_a=d_b$.
Since $\phi_0$ is an $\com{(a)}$-bimodule map, so must be $\phi$ by continuity, hence
in particular 
$$d_b(x)=\phi(d_a(x))=d_a\phi(x)\ \ \mbox{for all}\ x\in\weakc{\crr_a}=\weakc{d_a(\bh)}$$
and consequently $d_b(\weakc{\crr_a})\subseteq \crr_a$. Finally, to conclude the proof, note that
the assumption 
$\weakc{\crr_a}+\com{(a)}=\bh$ implies that $\crr_b=d_b(\weakc{\crr_a})$, since from (\ref{50})
$\com{(a)}\subseteq\com{(b)}=\ker d_b$ so that $\crr_b=d_b(\weakc{\crr_a}+
\com{(a)})=d_b(\weakc{\crr_a})$.
\end{proof}

By duality the condition $\weakc{d_a(\bh)}=\bh$ means that the kernel of $d_a|\trh$ is $0$, that is,
$\com{(a)}\cap\trh=0$. There are many Hilbert space operators $a$ which do not commute even with any nonzero compact
operator. This is so for example, if $a$ is normal and has no eigenvalues. (Namely, $\com{(a)}$ is a C$^*$-algebra
and contains the spectral projection $p$ corresponding to any nonzero eigenvalue of each $h=h^*\in\com{(a)}$.
If $h$ is compact, then  $p$ is of finite rank, hence $ap$, and therefore also $a$, has eigenvalues.) For a general normal
$a\in\bh$ we can decompose $\h$ into the orthogonal sum $\h=\h_1\oplus\h_2$, where
$\h_1$ is the closed linear span of all eigenvectors of $a$ and $\h_2=\h_1^{\perp}$. Then $a$ also decomposes
as $a_1\oplus a_2$, where $\com{(a_2)}$ contains no nonzero compact operators, while $a_1$ is diagonal in an orthonormal basis. (A general subnormal operator, however, can commute with a nonzero trace class operator even if it is pure; an example is in  \cite[2.1]{Wi}.) 

\begin{co}\label{th54}Let $a\in\bh$ and suppose that $\h$ decomposes into the
orthogonal sum $\h_1\oplus\h_2$ of two subspaces which are invariant under $a$, so that
$a=a_1\oplus a_2$, where $a_i\in{\rm B}(\h_i)$. If $a_1$ is a diagonalizable normal operator, while
$\com{(a_2)}\cap\trht=0$ and $\sigma_p(a_2)\cap\sigma_p(a_1)=\emptyset$, $\sigma_p(a_2^*)\cap\sigma_p(a_1^*)=\emptyset$, where $\sigma_p(c)$ denotes the set of all eigenvalues of an operator $c$, then the condition $\|d_b(x)\|\leq\|d_a(x)\|$ ($\forall x\in\bh$) implies that 
 $d_b(\bh)\subseteq d_a(\bh)$.
\end{co}

\begin{proof}Since  $b\in(a)^{\prime\prime}$,  $\h_1$ and
$\h_2$ are invariant subspaces for $b$, so  $b$ also decomposes as $b=b_1\oplus b_2$,
where $b_i\in{\rm B}(\h_i)$. Relative to the same decomposition of $\h$ each $x\in\bh$
can be represented by a $2\times2$ operator matrix $x=[x_{i,j}]$ and
$$d_b(x)=\left[\begin{array}{ll}
b_1x_{1,1}-x_{1,1}b_1&b_1x_{1,2}-x_{1,2}b_2\\
b_2x_{2,1}-x_{2,1}b_1&b_2x_{2,2}-x_{2,2}b_2
\end{array}\right].$$
Thus it suffices to show that for each pair $(i,j)$ of indexes  and for each $x_{i,j}\in{\rm B}(\h_j,\h_i)$ 
the element $b_ix_{i,j}-x_{i,j}b_j$ is in the range of the map 
$d_{a_i,a_j}$ defined on ${\rm B}(\h_j,\h_i)$ by $d_{a_i,a_j}(y)=a_iy-ya_j$. In the case
$i=2=j$ this follows from Proposition \ref{le53} and in the case $i=1=j$ this is an elementary special case of a result from \cite{JW}. We will now consider the case $i=2$ and
$j=1$, the remaining case $i=1$ and $j=2$ is treated similarly.

From the norm inequality condition we have in particular that 
$$\|d_{b_2,b_1}(x)\|\leq\|d_{a_2,a_1}(x)\|\ \ (\forall x\in
{\rm B}(\h_1,\h_2)).$$
This implies that there exists a bounded $\com{(a_2)},\com{(a_1)}$-bimodule map 
$$\phi_0:\normc{d_{a_2,a_1}({\rm K}(\h_1,\h_2))}\to{\rm K}(\h_1,\h_2)$$
such that $\phi_0d_{a_2,a_1}=d_{b_2,b_1}$. (To prove that $\phi_0$ is indeed a bimodule map, we use  that $\com{(a_i)}\subset\com{(b_i)}$,
which follows from $\com{(a)}\subseteq\com{(b)}$.) As in the proof of Proposition \ref{le53} we now extend $\phi_0$ weak* continuously to the weak* closure $\weakc{\mathcal R}$ of the range $\mathcal{R}$ of $d_{a_2,a_1}$ and show that $d_{b_2,b_1}(\weakc{\mathcal R})\subseteq\mathcal{R}$.
Finally, let $(\xi_j)_{j\in\bj}$ be an orthonormal basis of $\h_1$ consisting of eigenvectors of $a_1$ and let $\alpha_j$ be the corresponding eigenvalues. If $y\in\ker d_{a_1,a_2}$, then for each $j\in\bj$ and $\eta\in\h_2$ we have
$\inner{a_2^*y^*\xi_j-\overline{\alpha}_jy^*\xi_j}{\eta}=\inner{\xi_j}{ya_2\eta}-\inner{y^*a_1^*\xi_j}{\eta}=-\inner{\xi_j}{d_{a_1,a_2}(y)\eta}=0$, which means (by the arbitrariness of $\eta$) that $y^*\xi_j$ is an eigenvector for $a_2^*$ with the eigenvalue $\overline{\alpha}_j$. Since  by assumption $\sigma_p(a_2^*)\cap\sigma_p(a_1^*)=\emptyset$  and the vectors $\xi_j$ span $\h_1$, we infer that $y=0$. Thus $\ker d_{a_1,a_2}=0$.  Consequently $\overline{\mathcal{R}}$ (which is just the annihilator in ${\rm B}(\h_1,\h_2)$ of $\ker(d_{a_1,a_2}|{\rm T}(\h_2,\h_1)$) is equal to ${\rm B}(\h_1,\h_2)$. Therefore $d_{b_2,b_1}({\rm B}(\h_1,\h_2)=d_{b_2,b_1}(\overline{\mathcal{R}})\subseteq\mathcal{R}$.     
\end{proof}
 
{\em Problem.} Does  Corollary \ref{th54} still hold if we omit the hypothesis about the disjointness of the point spectra?

Perhaps, in general, (\ref{50})  does not even imply that $d_b(\bh)\subseteq\normc{d_a(\bh)}$,
but no counterexample is  known to the author. 
Note, however, that (\ref{50}) implies that $\ker d_a|\trh\subseteq\ker d_b|\trh$, hence by duality $d_b(\bh)\subseteq \weakc{d_a(\bh)}$; 
in particular $d_b(\kh)\subseteq\normc{d_a(\kh)}$ since
the weak topology agrees on $\kh$ with the weak* topology inherited from $\bh$. More generally, we will see that the 
question, whether   (\ref{50}) implies the inclusion  $d_b(\bh)\subseteq\normc{d_a(\bh)}$, depends entirely
on what happens in the Calkin algebra. 

For a C$^*$-algebra $A$ and $a\in A$ note that a functional $\rho\in\du{A}$ 
annihilates $d_a(A)$ if and
only if $[a,\rho]=0$, where $[a,\rho]\in\du{A}$ is defined by $([a,\rho])(x)=\rho(xa-ax)$. In other words, the
annihilator in $\du{A}$ of $d_a(A)$ is just the {\em centralizer} $C_a$ of $a$ in $\du{A}$.

\begin{pr}\label{pr06}If $a,b\in\bh$ satisfy $\|[b,x]\|\leq\|[a,x]\|$ for all $x\in\bh$, then 
$\|[\dot{b},\dot{x}]\|\leq\|[\dot{a},\dot{x}]\|$ in the Calkin algebra $\ch$. If this latter inequality
implies that $C_{\dot{a}}\subseteq C_{\dot{b}}$, then $C_a\subseteq C_b$ also holds, hence $d_b(\bh)\subseteq 
\normc{d_a(\bh)}$.
\end{pr}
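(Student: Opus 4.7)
The plan is as follows. The first assertion is an immediate application of Lemma \ref{le508} with $A=\bh$ and $J=\kh$: the inequality $\|[b,x]\|\leq\|[a,x]\|$ descends to the quotient $\ch=\bh/\kh$. For the second assertion, I would reduce the desired range inclusion to a centralizer inclusion by duality. Since $C_{a}$ is by definition the annihilator of $d_{a}(\bh)$ in $\du{\bh}$, the Hahn--Banach/bipolar theorem gives $\normc{d_{a}(\bh)}={}^{\perp}C_{a}$, and similarly for $b$. Hence proving $C_{a}\subseteq C_{b}$ will automatically yield $\normc{d_{b}(\bh)}\subseteq\normc{d_{a}(\bh)}$, which is exactly what we want. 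The core task is therefore to prove $C_{a}\subseteq C_{b}$ from the assumed implication $C_{\dot a}\subseteq C_{\dot b}$.

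The key step is the canonical decomposition $\du{\bh}=\trh\oplus X$, where $X$ consists of the singular functionals on $\bh$ (those vanishing on $\kh$); the space $X$ is canonically identified with $\du{\ch}$ via $\rho_{s}(x)=\dot\rho_{s}(\dot x)$. I claim the centralizer $C_{a}$ splits along this direct sum. Given $\rho=\rho_{n}+\rho_{s}\in C_{a}$ with $\rho_{n}\in\trh$ and $\rho_{s}\in X$, I would compute $\rho_{n}(d_{a}(x))={\rm tr}([\rho_{n},a]x)$ by cyclicity of the trace, a normal functional of $x$; meanwhile $\rho_{s}\circ d_{a}$ vanishes on $\kh$ (because $d_{a}(\kh)\subseteq\kh$, since $\kh$ is an ideal) and is therefore singular. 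Their sum is $\rho\circ d_{a}=0$, so by uniqueness of the normal/singular decomposition both must vanish individually, giving $\rho_{n},\rho_{s}\in C_{a}$. This splitting step is the place I expect to need the most care, since there is no a priori reason for the normal/singular decomposition of functionals to respect annihilator conditions; the argument works here precisely because $d_{a}$ preserves $\kh$.

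With the splitting in place, $C_{a}\subseteq C_{b}$ assembles from its two parts. The normal part is automatic: the hypothesis $\|[b,x]\|\leq\|[a,x]\|$ forces $\com{(a)}\subseteq\com{(b)}$ (take $x\in\com{(a)}$), and under the identification of $\trh$ with its image in $\du{\bh}$ one checks $C_{a}\cap\trh=\com{(a)}\cap\trh$, so $C_{a}\cap\trh\subseteq C_{b}\cap\trh$ follows. The singular part is exactly the assumed inclusion $C_{\dot a}\subseteq C_{\dot b}$ transported through $X\cong\du{\ch}$, since $\rho_{s}\in X$ lies in $C_{a}$ if and only if $\dot\rho_{s}\in C_{\dot a}$ (both conditions being equivalent to $\dot\rho_{s}\circ d_{\dot a}=0$). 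Combining the two parts completes the proof.
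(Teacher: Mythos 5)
Your proposal is correct and follows essentially the same route as the paper: the first claim via Lemma \ref{le508}, then the splitting of $\rho\in C_a$ into normal and singular parts (both lying in $C_a$ because $\rho\circ d_a=0$ decomposes into a normal and a singular summand that must each vanish), the normal part handled through $\com{(a)}\cap\trh\subseteq\com{(b)}\cap\trh$, the singular part transported to the Calkin algebra, and the conclusion by Hahn--Banach. The only differences are expository (you justify the normality/singularity of the two summands slightly more explicitly than the paper does).
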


\begin{proof}The first statement follows from Lemma \ref{le508}.
To prove the rest of the proposition, first note that for any $a\in\bh$ and a functional $\rho\in C_a$ the normal part $\rho_n$ and the singular 
part $\rho_s$ are both in $C_a$. (Indeed, from $[a,\rho]=0$ we have $[a,\rho_n]=-[a,\rho_s]$, where the left side is normal
and the right side is singular, hence both are $0$.) Further, since $\rho_n$ is given by a trace class operator $t$,
$[a,t]=0$, hence the hypothesis of the proposition implies that $[b,t]=0$, so $\rho_n\in C_b$. Since singular functionals
annihilate $\kh$, they can be regarded as functionals on the Calkin algebra $\ch$. Thus, if the condition
$\|[\dot{b},\dot{x}]\|\leq\|[\dot{a},\dot{x}]\|$ ($\dot{x}\in\ch$) implies that $C_{\dot{a}}\subseteq C_{\dot{b}}$,
then we have $\rho_s\in C_{\dot{b}}$, which means just that $\rho_s\in C_b$ (since $\rho_s$ annihilates $\kh$). Now both $\rho_n$ and $\rho_s$ are in $C_b$, hence so must be their sum $\rho$. This proves that $C_a\subseteq C_b$. The
Hahn-Banach theorem then implies that $d_b(\bh)\subseteq\normc{d_a(\bh)}$.
\end{proof}

\section{Commutators and the completely bounded norm}

In this section we will study  stronger variants of the condition $\|[b,x]\|\leq\|[a,x]\|$ ($x\in\bh$) in the context of
completely bounded maps.

\begin{lemma}\label{le61}If $a,b\in\bh$ satisfy
\begin{equation}\label{6100}\|[b^{(n)},x]\|\leq\|[a^{(n)},x]\|\ \ \mbox{for all}\ x\in\matn{\bh}\ \mbox{and all}\ n\in\bn,\end{equation}
then 
\begin{equation}\label{62}\|[\pi(b),x]\|\leq\|[\pi(a),x]\|\ \mbox{for all}\ x\in\bhp\end{equation}
for every unital $*$-representation $\pi:A\to\bhp$ of the C$^*$-algebra $A$ generated by $1,a$ and $b$.
\end{lemma}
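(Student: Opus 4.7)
The hypothesis (\ref{6100}) is equivalent to the assertion that the linear map
$$\phi:d_a(\bh)\to d_b(\bh),\quad [a,x]\mapsto[b,x],$$
is well-defined and a complete contraction (well-defined because the $n=1$ case forces $\com{(a)}\subseteq\com{(b)}=\ker d_b$, and completely contractive because (\ref{6100}) at level $n$ is precisely the $M_n$-norm condition, since the $(i,j)$-entry of $[a^{(n)},[x_{ij}]]$ is $[a,x_{ij}]$). My plan is to lift $\pi$ to a $*$-representation of all of $\bh$ via an Arveson-Stinespring dilation, and thereby reduce (\ref{62}) directly to (\ref{6100}).

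Given a unital $*$-representation $\pi:A\to\bhp$, first extend $\pi$ by Arveson's extension theorem to a unital completely positive map $\tilde\pi:\bh\to\bhp$, and then apply Stinespring's dilation to write $\tilde\pi(y)=V^*\rho(y)V$ for a unital $*$-representation $\rho:\bh\to{\rm B}(\mathcal{K})$ and an isometry $V:\mathcal{H}_\pi\to\mathcal{K}$. Since $\tilde\pi|_A=\pi$ is multiplicative, $A$ lies in the multiplicative domain of $\tilde\pi$; equivalently, $\rho(c)V=V\pi(c)$ for $c\in A$ and the projection $P:=VV^*$ belongs to $\com{\rho(A)}$. For any $x\in\bhp$ set $\hat x:=VxV^*\in{\rm B}(\mathcal{K})$; then $\|\hat x\|=\|x\|$, and a direct computation gives
$$[\rho(c),\hat x]=V\pi(c)xV^*-Vx\pi(c)V^*=V[\pi(c),x]V^*\quad(c\in A),$$
so $\|[\rho(c),\hat x]\|=\|[\pi(c),x]\|$. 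Hence it suffices to prove $\|[\rho(b),y]\|\le\|[\rho(a),y]\|$ for every $y\in{\rm B}(\mathcal{K})$ and every $*$-representation $\rho$ of $\bh$.

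To handle this, decompose $\mathcal{K}=\mathcal{K}_1\oplus\mathcal{K}_2$, where $\mathcal{K}_1$ is the essential subspace of $\rho|_{\kh}$; both summands are $\rho(\bh)$-invariant, yielding $\rho=\rho_1\oplus\rho_2$ with $\rho_1$ a multiple of the identity representation ($\mathcal{K}_1\cong\h\otimes L$ and $\rho_1(y)=y\otimes 1_L$) and $\rho_2$ annihilating $\kh$. For $y\in{\rm B}(\mathcal{K}_1)$ and a finite-rank projection $p_F$ on $L$, the compression $P_FyP_F$ with $P_F:=1_\h\otimes p_F$ is an element of ${\rm M}_{\dim p_F}(\bh)$; since $[\rho_1(a),P_F]=[\rho_1(b),P_F]=0$ the hypothesis (\ref{6100}) yields
$$\|P_F[\rho_1(b),y]P_F\|=\|[\rho_1(b),P_FyP_F]\|\le\|[\rho_1(a),P_FyP_F]\|\le\|[\rho_1(a),y]\|,$$
and as $p_F\nearrow 1_L$ the strong operator convergence $P_F[\rho_1(b),y]P_F\to[\rho_1(b),y]$ together with norm lower semi-continuity delivers the inequality on $\mathcal{K}_1$. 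The representation $\rho_2$ factors as $\sigma\circ q$ with $q:\bh\to\ch$ the quotient and $\sigma:\ch\to{\rm B}(\mathcal{K}_2)$ a $*$-representation; Lemma~\ref{le508} (extended matrix-wise via a quasicentral approximate unit in $\kh$) propagates (\ref{6100}) to the analogous completely bounded inequality for $\dot a,\dot b$ in $\ch$, after which one iterates the Arveson-Stinespring scheme for $\sigma$. The main obstacle I expect is completing this argument for the singular part $\rho_2$: since $\ch$ is simple it admits no further normal/singular splitting, so the clean way to conclude seems to require a Voiculescu-type absorption result that realizes $\sigma$ as an approximate factor of an identity-type representation of $\ch$, reducing the singular case back to the normal one handled above.
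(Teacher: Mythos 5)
Your opening reductions are correct: the multiplicative-domain computation $[\rho(c),VxV^*]=V[\pi(c),x]V^*$ is valid, and your finite-compression/SOT argument for a multiple of the identity representation is essentially the same device the paper uses to pass from finite $n$ to an infinite multiple. But the proof is incomplete, and the gap is structural rather than cosmetic. First, after splitting $\mathcal{K}=\mathcal{K}_1\oplus\mathcal{K}_2$ you only treat operators $y$ supported on a single summand. A general $y$ has off-diagonal blocks, and since the norm of a $2\times2$ operator matrix is not determined by the norms of its entries, the two diagonal inequalities do not yield $\|[\rho(b),y]\|\leq\|[\rho(a),y]\|$; moreover the off-diagonal blocks of the commutator, $\rho_1(b)y_{12}-y_{12}\rho_2(b)$, intertwine the two summands and are covered by neither of your cases. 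Second, and more seriously, the Arveson--Stinespring dilation replaces the separable C$^*$-algebra $A=C^*(1,a,b)$ by all of $\bh$, so the singular summand $\rho_2$ factors through the simple, non-separable Calkin algebra. The Voiculescu-type absorption you invoke to close the argument ($id\oplus\sigma$ approximately unitarily equivalent to $id$ when $\sigma$ kills the compacts) is a theorem about representations of \emph{separable} C$^*$-subalgebras of $\bh$; no such result is available for representations of $\ch$ or of $\bh$, so the iteration you sketch does not terminate. Note also that the target of your reduction --- the commutator inequality for every representation of $\bh$ --- already contains the lemma as the special case of restricting to $A$, so the dilation has not made the problem easier.

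The paper's proof stays with the separable algebra $A$ and uses Voiculescu's theorem exactly where your sketch needs it: writing $J=A\cap\kh$ and $\pi=\pi_n\oplus\pi_s$, one has that $\pi_n$ is a subrepresentation of a multiple $id^{(m)}$ of the identity representation and, by Voiculescu, $\pi\oplus id$ is approximately unitarily equivalent to $\pi_n\oplus id$, hence to a subrepresentation of $id^{(m+1)}$. The inequality holds for $id^{(m+1)}$ by (\ref{6100}) (with your compression argument when $m$ is infinite), passes to subrepresentations by extending $x$ by zero on the reducing complement, survives approximate unitary equivalence, and then compresses back to $\h_\pi$. This absorbs the normal part, the singular part and the cross terms in a single step. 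If you reorganize your argument around Voiculescu's theorem applied to $A$ (dropping the dilation to $\bh$), your correct ingredients slot into that scheme.
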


\begin{proof}First assume that $\h_{\pi}$ is separable. Let $J=\kh\cap A$, $\h_n=[\pi(J)\h_{\pi}]$, and let $\pi_n$ and $\pi_s$ be the representations of $A$ defined by $\pi_n(a)=\pi(a)|\h_n$ and 
$\pi_s(a)=\pi(a)|\ort{\h_n}$ ($a\in A$), so that $\pi=\pi_n\oplus\pi_s$. By basic theory of representations of C$^*$-algebras of compact
operators $\pi_n$ is a subrepresentation of a multiple $id^{(m)}$ of the identity representation. By Voiculescu's theorem
(\cite{Vo}, \cite{Ar}) the representation $\pi\oplus id$ is approximately unitarily equivalent to $\pi_n\oplus id$, hence $\pi\oplus id$
is approximately unitarily equivalent to a subrepresentation of $id^{(m+1)}$. It follows easily from (\ref{6100}) that 
(\ref{62}) holds for any
multiple of the identity representation in place of $\pi$, hence it must also hold for any subrepresentation $\rho$ of $id^{(m+1)}$ (to see this,
just take in (\ref{62}) for $x$ elements that live on the Hilbert space of $\rho$). But then it follows from the approximate equivalence
that the condition (\ref{62}) holds for $\pi\oplus id$ in place of $\pi$, hence also for $\pi$ itself.

In general, when $\h_{\pi}$ is not necessarily separable, $\h_{\pi}$ decomposes into an orthogonal sum $\oplus_{i\in\bi}\h_i$ 
of separable invariant subspaces for $\pi(A)$. For a fixed $x\in\bhp$ there exists a countable subset $\bj$ of $\bi$
such that the norm of the operator $[\pi(b),x]$ is the same as the norm of its compression to $\l:=\oplus_{i\in\bj}\h_i$.
Since $\l$ is separable, it follows from what we have already proved that $\|[\pi(b),x]\|\leq\|[\pi(a),x]\|$. 
\end{proof}

\begin{co}\label{co610}If $a,b\in\bh$ satisfy (\ref{6100}) then $b$ is contained in the C$^*$-algebra $B$ generated by
$a$ and $1$.
\end{co}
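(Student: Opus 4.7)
The plan is to derive the Corollary from Lemma \ref{le61} in two steps: (i) in every unital $*$-representation $\pi:A\to\bhp$, where $A:=C^*(1,a,b)$, the value $\pi(b)$ is pointwise determined by $\pi(a)$; and (ii) every state on $B:=C^*(1,a)$ extends uniquely to a state on $A$. From (ii) one deduces $B=A$ by a Jordan-decomposition argument in the dual.

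For (i), suppose $\pi_1,\pi_2:A\to\bhp$ satisfy $\pi_1(a)=\pi_2(a)$. Apply (\ref{62}) to the direct sum $\pi_1\oplus\pi_2:A\to\mattwo{\bhp}$ and the element $x=\bigl[\begin{smallmatrix}0&1\\0&0\end{smallmatrix}\bigr]\in\mattwo{\bhp}$. Since $(\pi_1\oplus\pi_2)(a)$ is block-diagonal with equal diagonal blocks, $[(\pi_1\oplus\pi_2)(a),x]=0$; hence by (\ref{62}) also $[(\pi_1\oplus\pi_2)(b),x]=0$, whose $(1,2)$-block equals $\pi_1(b)-\pi_2(b)$, forcing $\pi_1(b)=\pi_2(b)$.

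For (ii), let $\omega_1,\omega_2$ be two state extensions to $A$ of a state $\omega_B$ on $B$, with GNS data $(\pi_i,\h_i,\xi_i)$. The closed subspace $\k_i:=\overline{\pi_i(B)\xi_i}\subseteq\h_i$ is $\pi_i(B)$-reducing, and the key observation is that it is also $\pi_i(A)$-reducing: Lemma \ref{le61} ensures that (\ref{62}) holds in $\pi_i$, so $\pi_i(b)\in\pi_i(a)''\subseteq\pi_i(B)''$, which commutes with $P_{\k_i}\in\pi_i(B)'$. GNS uniqueness for $\omega_B$ supplies a unitary $U:\k_1\to\k_2$ with $U\xi_1=\xi_2$ intertwining $\pi_1|_B$ and $\pi_2|_B$ on these subspaces. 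The two $*$-representations of $A$ on $\k_1$ given by $\pi_1|_{\k_1}$ and $U^*\pi_2|_{\k_2}U$ agree on $a$; by (i) they agree on $b$, hence on the generators $\{1,a,b\}$ of $A$, so they are equal. Evaluating at $\xi_1$ yields $\omega_1=\omega_2$.

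To conclude, take $\phi\in B^{\perp}\subseteq A^*$ and write $\phi=\phi_1+i\phi_2$ with self-adjoint functionals $\phi_j\in B^{\perp}$. Jordan-decompose $\phi_j=\phi_j^{+}-\phi_j^{-}$ with $\phi_j^{\pm}$ positive on $A$. Then $\phi_j^{+}|_B=\phi_j^{-}|_B$; since $\|\varphi\|=\varphi(1)$ for positive $\varphi$ and $1\in B$, the $\phi_j^{\pm}$ have equal norms, and after normalization they become two state extensions of the same state on $B$. By (ii) they coincide, so $\phi_j=0$, hence $\phi=0$. Thus $B^{\perp}=\{0\}$, so $B$ is norm-dense in $A$; being closed, $B=A$, and in particular $b\in B$. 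The principal obstacle is establishing that $\k_i$ is $\pi_i(A)$-reducing in step (ii): this is the sole place where the completely bounded strength of (\ref{6100}), as encoded by Lemma \ref{le61} yielding (\ref{62}) in the GNS representation, is used essentially.
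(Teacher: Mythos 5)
Your proof is correct, but it takes a genuinely different route from the paper's. The paper's argument is three lines: pass to the universal representation $\pi$ of $A=C^*(a,b,1)$, use Lemma \ref{le61} to get $\pi(b)\in(\pi(a))''\subseteq\pi(B)''=\weakc{\pi(B)}$, and then quote \cite[10.1.4]{KR}, the fact that in the universal representation a C$^*$-subalgebra is relatively weak*-closed, so that $\weakc{\pi(B)}\cap\pi(A)=\pi(B)$. You replace that citation with a hands-on dual-space argument built on the same core consequence of Lemma \ref{le61} --- namely that $\pi(b)\in(\pi(a))''$ in \emph{every} unital representation, which is exactly what your off-diagonal matrix-unit trick in (i) and the $P_{\k_i}\in\pi_i(B)'$ observation in (ii) exploit --- deriving first that every state of $B$ extends uniquely to $A$, and then $B=A$ via Jordan decomposition, $\|\varphi\|=\varphi(1)$ for positive $\varphi$, and Hahn--Banach. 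All the individual steps check out: $\pi_1\oplus\pi_2$ and the representations $\rho_1$, $U^*\rho_2 U$ on $\k_1$ are legitimate unital representations of $A$ to which (\ref{62}) applies, the subspaces $\k_i$ do reduce $\pi_i(A)$ for the reason you give, and your step (iii) is the standard (and correct) observation that a unital C$^*$-subalgebra with the unique state extension property for \emph{all} states must be the whole algebra. What each approach buys: the paper's is much shorter but leans on the bidual/weak*-closure machinery of the universal representation; yours is longer but self-contained, using only GNS uniqueness and elementary functional-analytic facts, and it isolates the rigidity statement ``$\pi(a)$ determines $\pi(b)$ pointwise in every representation'' as the precise way in which the completely bounded hypothesis (\ref{6100}) enters.
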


\begin{proof}Let $\pi$ be the universal representation of $A=C^*(a,b,1)$ and $\h_{\pi}$ its Hilbert space. It follows
from Lemma
\ref{le61} (that is, from (\ref{62})) that $\pi(b)\in(\pi(a))^{\prime\prime}$, hence also $\pi(b)\in\pi(B)^{\prime\prime}$. 
But $\pi(B)^{\prime\prime}=\weakc{\pi(B)}$, thus $\pi(b)\in\weakc{\pi(B)}\cap\pi(A)=\pi(B)$, where the last equality is  
by \cite[10.1.4]{KR}.
\end{proof}

A completely contractive Hilbert module $\h$ over an operator algebra $A$ (that is, a Hilbert space on which 
$A$ has a completely
contractive representation) is a {\em cogenerator}  if for each nonzero morphism $R:\k\to\l$  of Hilbert $A$-modules (that is, a  bounded $A$-module map) there exists a morphism $T:\l\to\h$ such that $TR\ne0$ \cite[3.2.7]{BLM}.
Here by an operator algebra we will
always mean a norm complete algebra of operators on a Hilbert space.

\begin{pr}\label{th62}If $a,b\in\bh$ satisfy  (\ref{6100}), where $\h$ is a cogenerator
for the operator algebra $A_0$ generated by $a$ and $1$, then  $b\in A_0$.
\end{pr}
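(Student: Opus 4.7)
The plan is to use (\ref{6100}) to construct a completely contractive unital representation $\rho\colon A_0\to\bh$ with $\rho(a)=b$, and then exploit the cogenerator hypothesis to force $b$ into $A_0$.

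The first step is to extract $\rho$ from (\ref{6100}). The $n=1$ case of (\ref{6100}) already gives $(a)'\subseteq(b)'$, hence $b\in(a)''$ and in particular $[a,b]=0$. To promote the matrix-level inequality $\|[b^{(n)},x]\|\le\|[a^{(n)},x]\|$ into a completely contractive algebra homomorphism, I would invoke Lemma \ref{le61} together with Wittstock's extension theorem and the Paulsen $2\times 2$ trick, in the same spirit as Lemma \ref{le42}. The inequalities at every matrix level give precisely the estimates needed to control norms of matrices of polynomials in $b$ by the corresponding norms of polynomials in $a$, producing a well-defined completely contractive unital algebra homomorphism $\rho\colon A_0\to\bh$ with $\rho(a)=b$ that extends by continuity from polynomials in $a$ to all of $A_0$.

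Next, equip $\h$ with the new $A_0$-action via $\rho$, so that $\h$ becomes a completely contractive Hilbert $A_0$-module. By the cogenerator hypothesis, there exists a completely isomorphic embedding $J\colon\h\to\h^{(I)}$ of this module into a direct sum of copies of $\h$ with its standard action, intertwining the two actions: $J\rho(c)=c^{(I)}J$ for all $c\in A_0$, and in particular $Jb=a^{(I)}J$. This identifies $b=\rho(a)$ with the compression, through $J$, of $a^{(I)}\in A_0^{(I)}$ acting on the invariant subspace $J\h\subseteq\h^{(I)}$. Combined with the fact that $b\in C^*(a,1)$ by Corollary \ref{co610} and that $A_0$ is norm closed in $\bh$, passing limits of polynomials $p_n(a)\to c\in A_0$ through the intertwiner $J$ shows that the compression sends $A_0^{(I)}$ back into $A_0\subseteq\bh$, whence $b=\rho(a)\in A_0$.

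The central technical obstacle will be the first step: deducing a completely contractive algebra homomorphism sending $a$ to $b$ from the purely commutator-theoretic hypothesis (\ref{6100}). Since $a$ is not assumed normal we cannot invoke Fuglede--Putnam as in Lemma \ref{le42}, so the entire weight of the argument rests on Lemma \ref{le61} and on the Wittstock--Paulsen machinery to convert the matrix-level commutator estimates into genuine multiplicative cc estimates.
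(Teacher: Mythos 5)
Your first step is not merely the ``central technical obstacle''---it is impossible. The hypothesis (\ref{6100}) does not produce a unital completely contractive algebra homomorphism $\rho\colon A_0\to\bh$ with $\rho(a)=b$, because $b$ need not satisfy the algebraic relations that $a$ satisfies, so the assignment $p(a)\mapsto p(b)$ is in general not even well defined. Concretely, $b=\tfrac12 a$ always satisfies (\ref{6100}) (with equality up to the factor $\tfrac12$ at every matrix level); if $a$ is a nontrivial idempotent, any unital homomorphism must send it to an idempotent, and $\tfrac12 a$ is not one. The conclusion $b\in A_0$ holds trivially in that example, but not via any representation sending $a$ to $b$. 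The underlying issue is that the Wittstock--Paulsen--Smith machinery you cite (as in Lemma \ref{le42}) converts the commutator estimates into a completely bounded \emph{bimodule} map $[a,x]\mapsto[b,x]$ on the range of $d_a$; this is a linear statement and carries no multiplicative information about polynomials in $b$ versus polynomials in $a$. Since your steps 3--4 feed off the nonexistent $\rho$ (the new module structure on $\h$ and the intertwiner $J$ both presuppose it), the whole argument collapses.

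The paper's route is different and avoids any homomorphism $a\mapsto b$. Let $\pi$ be the universal representation of $A=C^*(a,b,1)$. Lemma \ref{le61} transfers (\ref{6100}) to the inequality $\|[\pi(b),x]\|\leq\|[\pi(a),x]\|$ for all $x\in\bhp$, whence $\pi(b)\in\pi(A_0)^{\prime\prime}$. The cogenerator hypothesis is used only to invoke the Blecher--Solel bicommutation theorem, which gives $\weakc{\pi(A_0)}=\pi(A_0)^{\prime\prime}$, and then $\pi(b)\in\weakc{\pi(A_0)}\cap\pi(A)=\pi(A_0)$ by \cite[10.1.4]{KR}. If you want to salvage your outline, the object to aim for is the bicommutant $\pi(A_0)^{\prime\prime}$ and its identification with the weak* closure of $\pi(A_0)$, not a representation of $A_0$ carrying $a$ to $b$.
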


\begin{proof}Let $\pi$ be the universal representation of the C$^*$-algebra $A$ generated by $1,a$ and $b$. 
Then $\h_{\pi}$ (the Hilbert space of $\pi$) is a cogenerator for $A_0$. (Indeed, let $R:\k\to\l$ be a nonzero morphism of Hilbert $A_0$-modules and denote by $\rho$ the completely contractive representation of $A_0$ on $\l$ through which the $A_0$-module structure has been introduced on $\l$. There exists a representation $\sigma$ of $A$ on a Hilbert space $\l_1\supseteq\l$ such that $\rho(a)=\sigma(a)|\l$ for all $a\in A_0$ \cite{Pa}, hence $\l$ is a Hilbert $A_0$-submodule of $\l_1$. Thus $R(\k)\subseteq\l_1$. Since $\pi$ is universal (thus $\l_1$ is contained in a multiple of $\h_{\pi}$), there exists a morphism $T_1:\l_1\to\h_{\pi}$ of Hilbert $A$-modules  such that $T_1(R(\k))\ne0$. Then  $T:=T_1|\l:\l\to\h_{\pi}$ is a morphism of Hilbert $A_0$ modules such that $TR\ne0$.)  Hence 
by the Blecher-Solel bicommutation theorem (see \cite[3.2.14]{BLM}) 
$\weakc{\pi(A_0)}=\pi(A_0)^{\prime\prime}$. 
From Lemma \ref{le61} $\pi(b)\in \pi(A_0)^{\prime\prime}$, hence $\pi(b)\in\weakc{\pi(A_0)}\cap\pi(A)=\pi(A_0)$
by \cite[10.1.4]{KR}).
\end{proof}

The author does not know if in Proposition \ref{th62} the assumption that $\h$ is a cogenerator  is dispensable. In particular the following problem is open.

\smallskip
{\em Problem.} If in (\ref{6100}) $a$ is subnormal, is then $b$ necessarily of the form $b=f(a)$ for
some function $f$? Is $b$ necessarily subnormal?

\section{Commutators of functions of subnormal operators}

By Theorem \ref{th3} and Lemma \ref{le41} for a normal
operator $a$ the condition (\ref{50}) implies that $b=f(a)$
for a Lipschitz function $f$. However, as observed in \cite{JW}, (\ref{50}) implies that $f$
must have additional properties. In this section we will study properties of a function $f$ that imply or are implied by an inequality  of the form
\begin{equation}\label{598}\|[f(a),x]\|\leq\kappa\|[a,x]\| \ \ \forall x\in\bh,\end{equation}
where $a$ is a subnormal operator. 

\subsection{Schur functions}

Let us begin with the case when $a$ is a diagonal normal operator. 
Then there exists an orthonormal basis of $\h$ consisting of eigenvectors of $a$; let $\lambda_i$ be the corresponding
eigenvalues. If we denote by $[x_{i,j}]$ the matrix of a general operator $x\in\bh$ with respect to this basis,
then the inequality (\ref{598}) assumes the form
\begin{equation}\label{53}\|[(f(\lambda_i)-f(\lambda_j))x_{i,j}]\|\leq\kappa\|[(\lambda_i-\lambda_j)x_{i,j}]\|.
\end{equation}
In the same way we can express the inequality in Corollary \ref{co52}(i). Since there exist contractive projections from $\bh$ and from $\trh$ onto  subsets of block diagonal matrices, it follows  that the condition (\ref{598}) and its analogue for the trace norm are
equivalent to the requirements that the matrix $\Lambda(f)$ with the entries
\begin{equation}\label{54}\Lambda_{i,j}(f)=\left\{\begin{array}{ll}\frac{f(\lambda_i)-f(\lambda_j)}{\lambda_i-\lambda_j},&\mbox{if}\ \lambda_i\ne \lambda_j\\
0,&\mbox{if}\ \lambda_i=\lambda_j\end{array}\right.\end{equation}
is a Schur multiplier on $\bh$ and $\trh$ (respectively). In one direction  the last statement  can be generalized to subnormal operators.
 
\begin{pr}\label{pr521}Let $a\in\bh$ be a subnormal operator  and let $f$ be a Lipschitz function on $\sigma(a)$.
If $a$ is not normal, assume  that $f$ is in the uniform closure of the set of  rational functions with poles outside
$\sigma(a)$, so that $b:=f(a)$ is defined. If $\|[b,x]\|\leq\kappa\|[a,x]\|$ for all $x\in\bh$, then for
each sequence $(\lambda_i)\subseteq\sigma(a)$ the matrix $\Lambda(f;\lambda)$ with the entries  defined by the right side of (\ref{54}) is a Schur multiplier with
the norm at most $2\kappa$. That is, $f$ is a Schur function on $\sigma(a)$ as defined in the Introduction. Similarly,
the condition $\|[b,x]\|_1\leq\kappa\|[a,x]\|_1$ for all $x\in\trh$ implies that $\Lambda(f;\lambda)$ is a Schur multiplier on 
$\trh$ with the norm at most $2\kappa$. 
\end{pr}

\begin{proof}First suppose that $(\lambda_i)_{i=1}^m$ is a finite subset of the boundary $\partial\sigma(a)$ of $\sigma(a)$, where the $\lambda_i$ are distinct.
Then each $\lambda_i$ is an approximate eigenvalue of $a$ \cite{Co}, hence there exists a sequence of unit vectors $\xi_{i,n}\in\h$
such that $\lim_n\|(a-\lambda_i1)\xi_{i,n}\|=0$. Since $a-\lambda_i1$ is hyponormal, $\|(a-\lambda_i1)^*\xi_{i,n}\|\leq\|(a-\lambda_i1)\xi_{i,n}\|$
and it follows that the sequence
$$(\lambda_i-\lambda_j)\inner{\xi_{i,n}}{\xi_{j,n}}=\inner{\lambda_i\xi_{i,n}}{\xi_{j,n}}-
\inner{\xi_{i,n}}{\overline{\lambda}_j
\xi_{j,n}}$$ converges to $\lim_n(\inner{a\xi_{i,n}}{\xi_{j,n}}-\inner{\xi_{i,n}}{a^*\xi_{j,n}})=0.$
Thus $\lim\inner{\xi_{i,n}}{\xi_{j,n}}=0$  if $i\ne j$,  so the set $\{\xi_{1,n},\ldots,\xi_{m,n}\}$ is approximately orthonormal
if $n$ is large. Therefore for each matrix $\alpha=[\alpha_{i,j}]\in\matm{\bc}$ the norm of the operator 
$x:=\sum_{i,j=1}^m\alpha_{i,j}\xi_{i,n}\otimes\xi_{j,n}^*$ is approximately equal to the usual operator norm of $\alpha$. Further, for large $n$
we have approximate equalities
$$d_a(x)=\sum_{i,j=1}^m\alpha_{i,j}(a\xi_{i,n}\otimes\xi_{j,n}^*-\xi_{i,n}\otimes(a^*\xi_{j,n})^*)\approx\sum_{i,j=1}^m\alpha_{i,j}(\lambda_i-\lambda_j)
\xi_{i,n}\otimes\xi_{j,n}^*$$
and
$$d_b(x)\approx\sum_{i,j=1}^m\alpha_{i,j}(f(\lambda_i)-f(\lambda_j))\xi_{i,n}\otimes\xi_{j,n}^*,$$
hence it follows from the assumption $\|d_b(x)\|\leq\kappa\|d_a(x)\|$ that
\begin{equation}\label{201}\|[(f(\lambda_i)-f(\lambda_j))\alpha_{i,j}]_{i,j=1}^m\|\leq\kappa\|[(\lambda_i-\lambda_j)\alpha_{i,j}]_{i,j=1}^m\|.\end{equation}
By continuity the estimate (\ref{201}) holds also when $\lambda_i$ are not necessarily distinct. This estimate means that for a finite collection $\lambda=(\lambda_i)_{i=1}^m$ of not necessarily distinct elements of $\partial\sigma(a)$ the matrix $\Lambda(f;\lambda)$ with
the entries 
\begin{equation}\label{541}\Lambda_{i,j}(f;\lambda)=\left\{\begin{array}{ll}\frac{f(\lambda_i)-f(\lambda_j)}{\lambda_i-\lambda_j},&\mbox{if}\ \lambda_i\ne \lambda_j\\
0,&\mbox{if}\ \lambda_i=\lambda_j\end{array}\right.\end{equation}
acts as a Schur multiplier with the norm
at most $\kappa$ on the subspace $E\subseteq\matm{\bc}$ of matrices of the form $[(\lambda_i-\lambda_j)\alpha_{i,j}]$. 
Note that  $E$ (which depends on $\lambda_1,\ldots,\lambda_m$) is just the set 
of all matrices with zero entries on those positions $(i,j)$ for which $\lambda_i=\lambda_j$. Let $D$ be the subspace  of corresponding block diagonal
matrices (that is, matrices in $\matm{\bc}$ with non-zero entries only on those positions $(i,j)$ for which $\lambda_i=\lambda_j$). Since the natural projection from $\matm{\bc}$ onto $D$ has  Schur norm $1$ (and $\Lambda(f,\lambda)(D)=0$), it follows that the norm of $\Lambda(f;\lambda)$ as a Schur multiplier on $\matm{\bc}$ is at most $2\kappa$; the same bound $2\kappa$ is valid for all $m$. (Now it already follows from the second half of the proof of \cite[4.1]{JW}, that for each (non-isolated) point $\zeta\in\partial\sigma(a)$ the limit $f^{\prime}(\zeta):=\lim_{z\in\partial\sigma(a), z\to\zeta}\frac{f(z)-f(\zeta)}{z-\zeta}$ exists. So we can redefine the matrix $\Lambda(f;\lambda)$ by setting $\Lambda_{i,j}=f^{\prime}(\lambda_i)$ if $\lambda_i=\lambda_j$ (with $f^{\prime}(\lambda_i)$ interpreted as $0$ if $\lambda_i$ is isolated). Then (\ref{201}) and the continuity imply that the new $\Lambda(f,\lambda)$ has  Schur norm at most $\kappa$. But it is not necessary to use this redefined $\Lambda(f;\lambda)$ in this proof.)    

If $a$ is normal, then the above argument applies to all points of $\sigma(a)$ (not just points in $\partial\sigma(a)$) since all are approximate eigenvalues, hence we assume from now on that $a$ is not normal. Then by hypothesis $f$ is a uniform limit of rational functions with poles outside $\sigma(a)$, hence holomorphic on the interior
$G$ of $\sigma(a)$. We can use the first line of (\ref{541}) to define $\Lambda_{i,j}(f;\lambda)$ also for all pairwise distinct $\lambda_1,\ldots,\lambda_m$ from $G$.  When  $\lambda_i=\lambda_j\in G$ we do define $\Lambda_{i,j}(f;\lambda)$ by setting $\Lambda_{i,j}(f;\lambda)=f^{\prime}(\lambda_j)$.  For fixed elements $\lambda_2,\ldots,\lambda_m$ of $\partial\sigma(a)$ consider the function 
$$g(\lambda_1):=\Lambda(f;\lambda_1,\lambda_2,\ldots,\lambda_m)$$
from $\sigma(a)\setminus\{\lambda_2,\ldots,\lambda_m\}$ into the Banach algebra $S_m=\matm{\bc}$ equipped with the Schur norm. This function is holomorphic on $G$ and  (since $f$ is Lipschitz) bounded (by $m^2\kappa$). We would like to prove  that $g$ is bounded on $G$ by the same bound ($2\kappa$) as on $\partial\sigma(a)$, but we do not know if $g$ can be extended continuously to the closure $\overline{G}$ of $G$. (Namely, discontinuities can appear at the possible boundary points  $\lambda_2,\ldots,\lambda_m$.) We may consider the scalar valued functions $g_{\omega}=\omega g$  for all  linear functionals $\omega$ on $S_m$ with $\|\omega\|=1$. If for a fixed $\omega$ we denote $M=\sup_{\zeta\in\partial G\setminus\{\lambda_2,\ldots,\lambda_m\}}\lim_{z\to\zeta, z\in G}|g_{\omega}(z)|=\sup_{\zeta\in\partial G\setminus\{\lambda_2,\ldots,\lambda_m\}}|g_{\omega}(\zeta)|$ and $h(z)=|g_{\omega}(z)|-M$, then $h$ is subharmonic on $G$ and it follows from the extended maximum principle \cite[3.6.9]{Ra} (and the fact that finite sets are polar \cite[p. 56]{Ra}, while $\partial G$ is not polar since $G$ is bounded) that $h(\zeta)\leq0$ for all $\zeta\in G$. Thus $|h(\zeta)|\leq M$ for all $\zeta\in G$ and (since $M\leq\sup_{\zeta\in\partial G}\|g(\zeta)\|\leq2\kappa$) we deduce that $\sup_{\lambda_1\in G}\|g(\lambda_1)\|\leq2\kappa$. 
Thus the Schur norm of $\Lambda(f;\lambda_1,\lambda_2,\ldots,\lambda_m)$ is at most
$2\kappa$ for all $\lambda_1\in\sigma(a)$ and $\lambda_2,\ldots,\lambda_m\in\partial\sigma(a)$. In the same way, by considering the function $\lambda_2\mapsto\Lambda(f;\lambda_1,\lambda_2,\ldots,\lambda_m)$  for fixed $\lambda_1\in \sigma(a)$ and $\lambda_3,\ldots\lambda_m\in\partial\sigma(a)$, we can now show that the Schur norm of $\Lambda(f;\lambda_1,\ldots,\lambda_m)$ is at most $2\kappa$
for all $\lambda_1,\lambda_2\in \sigma(a)$ and $\lambda_3,\ldots,\lambda_m\in\partial\sigma(a)$.   Proceeding successively, we see that this must hold for all $\lambda_i \in\sigma(a)$ and, since the same bound $2\kappa$ is valid for all choices of $\lambda_1,\ldots,\lambda_m$ and all $m$, this implies that $f$ is a Schur function on $\sigma(a)$.  This proves the case of $\bh$ and the proof for $\trh$ is similar.
\end{proof}

It is well-known  that a matrix is  a Schur multiplier on
$\trh$ if and only if its transpose is a Schur multiplier on $\bh$ and the two multipliers have the same norm.
For a rank one operator $x$ the operators $d_a(x)$ and $d_b(x)$ have rank at most two and on such operators the trace class
norm is equivalent to the usual operator norm. If $a$ is normal, we deduce now from Corollary \ref{co52}, 
Lemma \ref{le41} and Theorem \ref{th3} that each of the two range inclusions  $d_b(\bh)\subseteq d_a(\bh)$ and
$d_b(\trh)\subseteq d_a(\trh)$ implies that $b$ is of the form $b=f(a)$ for a Lipschitz function $f$ on $\sigma(a)$. Then  $f$ is a
Schur function by  Proposition \ref{pr521}.

For  normal operators the converse of Proposition \ref{pr521} holds. Namely, let $a$ be normal and $f$  a
Schur function  on $\sigma(a)$. Given $\varepsilon>0$, 
by the Weyl-von Neumann-Bergh theorem
\cite[Corollary 39.6]{Co2} there exists a diagonal operator $a_0$ such that $\sigma(a_0)\subseteq\sigma(a)$, 
$\|a-a_0\|<\varepsilon$ and (approximating $f$
by polynomials) $\|f(a)-f(a_0)\|<\varepsilon$. Then, by what we have already proved for diagonal operators (by the computation preceding Proposition \ref{pr521}), for each $x\in\bh$ with $\|x\|=1$ we have $\|[f(a_0),x]\|\leq\kappa\|[a_0,x]\|$ for a constant $\kappa$, hence 
$\|[f(a),x]\|\leq\|[f(a_0),x]\|+2\varepsilon\leq\kappa\|[a_0,x]\|+2\varepsilon\leq\kappa\|[a,x]\|+2\kappa\varepsilon
+2\varepsilon$. Since this holds for all $\varepsilon>0$, we infer that $\|[f(a),x]\|\leq\kappa\|[a,x]\|$.  Thus we may summarize the above discussion
in  the following theorem  proved
already by Johnson and Williams in \cite{JW} in a somewhat different way.

\begin{theorem}\label{JW}\cite{JW} If $a\in\bh$ is normal, then for any $b\in\bh$ the inclusion $d_b(\bh)\subseteq d_a(\bh)$
holds if and only if there exists a constant $\kappa$ such that $\|d_b(x)\|\leq\kappa\|d_a(x)\|$ for all $x\in\bh$ and
this is also equivalent to the condition that $b=f(a)$ for a Schur function $f$ on $\sigma(a)$.
\end{theorem}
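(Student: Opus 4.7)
The plan is to organize the material already assembled in this section into a three-way cycle of implications $(1) \Rightarrow (3) \Rightarrow (2) \Rightarrow (1)$, where $(1)$ is the range inclusion $d_b(\bh)\subseteq d_a(\bh)$, $(2)$ is the commutator norm estimate, and $(3)$ is the Schur function representation $b=f(a)$. The main ingredients are Lemma~\ref{le41}, Theorem~\ref{th3}, Lemma~\ref{le521}, Proposition~\ref{pr50} and Corollary~\ref{co52}, together with a diagonal approximation from Weyl--von Neumann--Berg.

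For $(1) \Rightarrow (3)$, I would first apply Corollary~\ref{co52}(i): the range inclusion produces a constant $\kappa$ with $\|d_b(t)\|\leq\kappa\|d_a(t)\|$ for every $t\in\trh$. Specializing to $t=\xixi$ and using Lemma~\ref{le41} yields $\dxb\leq\kappa^2\dxa$ for every unit $\xi\in\h$, so normality of $a$ together with Theorem~\ref{th3} produces a Lipschitz function $f$ on $\sigma(a)$ with $b=f(a)$. Lemma~\ref{le521} then certifies that $f$ is a Schur function. The implication $(2)\Rightarrow(1)$ is the content of Proposition~\ref{pr50} for normal $a$, so only $(3)\Rightarrow(2)$ remains.

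The diagonal case is the core computation: if $a=\mathrm{diag}(\lambda_i)$ relative to an orthonormal basis, then in this basis $d_a$ and $d_b$ act on a matrix $x=[x_{ij}]$ by multiplying entries by $\lambda_i-\lambda_j$ and $f(\lambda_i)-f(\lambda_j)$ respectively; since the projection onto off-diagonal matrices is contractive in the Schur norm, the assumption that $f$ is Schur on $\sigma(a)$ delivers (\ref{533}) with $\kappa$ equal to the Schur norm of $f$, simultaneously for the operator norm on $\bh$ and (via the transposition duality between Schur multipliers on $\bh$ and on $\trh$) for the trace norm. For a general normal $a$, I would use the Weyl--von Neumann--Berg theorem to obtain, for each $\varepsilon>0$, a diagonal normal $a_0$ with $\sigma(a_0)\subseteq\sigma(a)$ and $\|a-a_0\|<\varepsilon$; approximating $f$ uniformly by polynomials on $\sigma(a)$ gives $\|f(a)-f(a_0)\|<C\varepsilon$. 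A chain of triangle inequalities
\[
\|[f(a),x]\|\leq\|[f(a_0),x]\|+2\varepsilon\|x\|\leq\kappa\|[a_0,x]\|+2\varepsilon\|x\|\leq\kappa\|[a,x]\|+(2\kappa+2)\varepsilon\|x\|
\]
then yields the desired estimate after letting $\varepsilon\to 0$.

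The main obstacle is clean bookkeeping in the approximation step: one must observe that the Schur norm of $f$ restricted to the smaller set $\sigma(a_0)$ is controlled by its Schur norm on $\sigma(a)$ (which is automatic from the supremum definition, since any finite sample from $\sigma(a_0)$ is a finite sample from $\sigma(a)$) and that polynomial approximation of $f$ transfers to an operator-norm approximation $f(a_0)\to f(a)$. Once these routine points are verified, each of the three implications is a short application of results already in hand.
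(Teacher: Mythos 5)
Your proposal is correct and follows essentially the same route as the paper's: Corollary~\ref{co52} combined with Lemma~\ref{le41} and Theorem~\ref{th3} to produce a Lipschitz $f$ with $b=f(a)$, Lemma~\ref{le521} (in its trace-norm form, plus the transposition duality for Schur multipliers) to see that $f$ is Schur, the $\com{(a)}$-bimodule/Wittstock argument of Proposition~\ref{pr50} for the range inclusion, and the diagonal case plus Weyl--von Neumann--Berg approximation for the implication from the Schur condition to the commutator estimate. The only bookkeeping point worth noting is that the inequality furnished by Corollary~\ref{co52}(i) is in the trace norm, so specializing to $t=\xixi$ and invoking Lemma~\ref{le41} costs an extra constant coming from the equivalence of the trace and operator norms on rank-two operators --- harmless, since Theorem~\ref{th3} only requires the variance inequality with some constant.
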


By \cite[6.5]{KS}, if $a$ is normal, (\ref{51}) implies (\ref{50}) in any C$^*$-algebra $A$. 
The converse is  true only under additional assumptions about $A$ 
(for example, if $A$ is a von Neumann algebra), but 
since the proof would considerably lengthen the paper, we will not present it  here.

Following the usual convention, we denote by ${\rm Rat}(K)$  the algebra of all rational functions with poles outside a compact subset
$K\subseteq\bc$ and, if $\mu$ is a positive Borel measure on $K$, $R^2(K,\mu)$ is the closure in
$L^2(\mu)$ of ${\rm Rat}(K)$. As before, for $a\in\bh$ we denote by $\dot{a}$ the coset in the Calkin algebra $\ch$. The simplest example of an operator $a$ satisfying the conditions of our next proposition is the unilateral shift.

\begin{pr}\label{pr6101}Let $K$ be a compact subset of $\bc$, $a$ a subnormal operator with $\sigma(a)\subseteq K$ 
such that $a$ is cyclic for the algebra ${\rm Rat}(K)$ and  let
$c$ be the minimal normal extension of $a$. Assume that $\sigma(c)= \sigma(\dot{a})$, let $\mu$ be a scalar 
spectral measure
for $c$ such that $a$ is the multiplication on $\h:=R^2(K,\mu)$ by the identity function $z$. Denote by $p$ the 
orthogonal projection from $\k:=L^2(\mu)$ onto $\h$
and assume that the only function $h\in C(\sigma(c))+(L^{\infty}(\mu)\cap R^2(K,\mu))$ for which the operator $T_h$ defined by 
$T_h(\xi):=p(h\xi)$ ($\xi\in\h$)
is compact is $h=0$. Then for each $b\in\bh$ satisfying $\|[b,x]\|\leq\|[a,x]\|$ ($x\in\bh$)  there exists a function
$f\in C(\sigma(c))\cap R^2(K,\mu)$  such that $b=f(c)|\h$.

Moreover, if $K$ is the closure of a domain $G$ bounded by finitely many non-intersecting analytic Jordan curves and 
$a$ is the multiplication operator by $z$ on the 
Hardy space $H^2(G)$, $f$ can be
extended to a Schur function on $K$.
\end{pr}

\begin{proof}It is well-known that a rationally cyclic subnormal  operator $a$ can be
represented as the multiplication on $R^2(K,\mu)$ by the independent variable $z$ \cite[p. 51]{Co3} and that
$\com{(a)}=R^2(K,\mu)\cap L^{\infty}(\mu)$ by Yoshino's theorem \cite[p. 52]{Co3}. Since $b\in\com{(a)}$, 
it follows that $b$ is the multiplication on $R^2(K,\mu)$ by a function 
$f\in R^2(K,\mu)\cap L^{\infty}(\mu)$. Thus $b=T_f$ since $\h=R^2(K,\mu)$ is invariant under multiplications by functions from $R^2(K,\mu)\cap L^{\infty}(\mu)$.

It follows from Lemma \ref{le41} and Bair's theorem (as in the proof of Theorem \ref{th42}) that at least one of the inequalities  $\dxb\leq\dxa$, $\dxb\leq\dxaa$ holds for all nonzero $\xi\in\h$.
Since $a$ is essentially normal by the Berger-Shaw theorem \cite[p. 152]{Co3},  by Corollary \ref{co32} 
$\dot{b}=g(\dot{a})$ for a continuous
function $g$ on $\sigma(\dot{a})$. Further, since $c$ is normal and $a$ is subnormal and essentially normal, an easy computation
with $2\times 2$ operator matrices (relative to the decomposition $\k=\h\oplus\h^{\perp}$) shows that the operator
$\ort{p}c^*p$ is compact, hence (since also $\ort{p}cp=0$) $\dot{p}\dot{c}=\dot{c}\dot{p}$. Consequently the map $h\mapsto \dot{T}_h\ (=ph(c)|\h)$ from $C(\sigma(c))$ into the
Calkin algebra ${\rm C}(\h)$ is a $*$-homomorphism, thus it must coincide with the $*$-homomorphism $h\mapsto h(\dot{a})$ since
the two coincide on the generator ${\rm id}_{\sigma(c)}$. It follows in particular that $\dot{b}=g(\dot{a})=\dot{T_g}$, hence the operator
$T_{g-f}=T_g-b$ is compact. But by the hypothesis this is possible only if $g-f=0$, hence $f=g$, therefore continuous. 

In the case $a$ is the unilateral shift, $f$ is a continuous function on the circle and contained in the closure 
$P^2(\mu)$ of polynomials in $L^2(\mu)$, where $\mu$ is the normalized
Lebesgue measure on the circle. It is well known that such a function can be holomorphically extended to the disc $\bd$ such that the extension
(denoted again by $f$) is continuous on $\weakc{\bd}$. By Proposition \ref{pr521} $f$ is a Schur function on $\weakc{\bd}$.  
Similar arguments
apply to multiply connected domains bounded by analytic Jordan curves by \cite[2.11, 1.1]{Ab}, \cite[4.3, 9.4]{Mu}.
\end{proof}

\subsection{A sufficient degree of smoothness}

By Proposition \ref{pr521}  
the inequality (\ref{598}) can hold only for
Schur functions. But the author does not know if (\ref{598}) holds for all Schur functions and all subnormal operators $a$, we will prove this
for all Schur functions only if $\sigma(a)$ is nice enough (Theorem \ref{th621}). 

It 
follows from the proof in \cite[Theorem 4.1]{JW} that a Schur function $f$ is complex differentiable  in the sense that the limit
$f^{\prime}(\zeta_0)=\lim_{\zeta\to\zeta_0,\ \zeta\in\sigma(a)}(f(\zeta)-f(\zeta_0))/(\zeta-\zeta_0)$ exists 
at each non-isolated point 
of $\sigma(a)$. Moreover, from the Lipschitz condition on $f$ we see that
$f^{\prime}$ is bounded. However, the boundedness of $f^{\prime}$ is not sufficient for $f$ to be a Schur function.
When $a$ is selfadjoint it is proved in \cite[5.1]{JW} that (\ref{598}) holds if $f^{(3)}$ is continuous. We will prove (\ref{598})
for subnormal operators $a$ under a much milder condition on $f$ (for example, $f^{\prime}$ Lipschitz suffices), but perhaps  our condition on $f$ is still more restrictive than Peller's condition that $f$ is a restriction of a function from the 
appropriate Besov space (see \cite{Pe} and \cite{APPS}), which is sufficient when $a$ is normal.

We will start from the special case of the Cauchy-Green formula
\begin{equation}\label{599}g(\lambda)=-\frac{1}{\pi}\int_{\bc}\frac{\dbar g(\zeta)}{\zeta-\lambda}\, dm(\zeta),
\end{equation}
which holds for a compactly supported differentiable function $g$ such that $\dbar g$ is bounded. Here $m$ denotes the planar Lebesgue measure and 
$\dbar g=(1/2)(\frac{\partial g}{\partial x}+i\frac{\partial g}{\partial y})$. (The proof in \cite[20.3]{Ru}
is valid for functions with the properties just stated.)
We note that an operator calculus based on the Cauchy-Green formula was already developed by Dynkin \cite{Dy}, however we will 
need rather different results, specific to subnormal operators.

\begin{lemma}\label{le610}If $a\in\bh$ is a subnormal operator and $g:\bc\to\bc$ is a differentiable function with compact 
support such that $\dbar g$ is bounded and $\dbar g|\sigma(a)=0$, then
\begin{equation}\label{610}\inner{g(a)\eta}{\xi}=-\frac{1}{\pi}\int_{\bc\setminus\sigma(a)}\dbar g(\zeta)
\inner{(\zeta1-a)^{-1}\eta}{\xi}\, dm(\zeta),\ \ (\xi,\eta\in\h).
\end{equation}
\end{lemma}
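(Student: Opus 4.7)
The plan is to reduce the vector-valued identity to the scalar Cauchy--Green formula (\ref{599}) by passing to the minimal normal extension of $a$. Let $c\in\bk$ on $\k\supseteq\h$ be the minimal normal extension of $a$, so that $\sigma(c)\subseteq\sigma(a)$ and $g(c)=\int_{\sigma(c)} g(\lambda)\,dE(\lambda)$ is defined by the Borel functional calculus of $c$. For $\eta\in\h$ and $\zeta\notin\sigma(a)$, the vector $(\zeta1-a)^{-1}\eta\in\h$ solves $(\zeta-c)x=\eta$ in $\k$, so $(\zeta-c)^{-1}\eta=(\zeta1-a)^{-1}\eta$; hence the inner product on the right side of the claimed formula is also equal to $\inner{(\zeta-c)^{-1}\eta}{\xi}_\k$.

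Next I would rewrite this latter inner product through the spectral theorem, using the complex measure $\mu_{\eta,\xi}(\cdot)=\inner{E(\cdot)\eta}{\xi}$ supported on $\sigma(c)$:
$$\inner{(\zeta-c)^{-1}\eta}{\xi}_\k=\int_{\sigma(c)}\frac{1}{\zeta-\lambda}\,d\mu_{\eta,\xi}(\lambda),$$
and then apply Fubini to switch the order of integration in
$$-\frac{1}{\pi}\int_{\bc\setminus\sigma(a)}\dbar g(\zeta)\inner{(\zeta1-a)^{-1}\eta}{\xi}\,dm(\zeta)=-\frac{1}{\pi}\int_{\sigma(c)}\Bigl(\int_{\bc\setminus\sigma(a)}\frac{\dbar g(\zeta)}{\zeta-\lambda}\,dm(\zeta)\Bigr)\,d\mu_{\eta,\xi}(\lambda).$$
For each $\lambda\in\sigma(c)\subseteq\sigma(a)$ the inner integral extends to all of $\bc$ without change, because $\dbar g$ vanishes on $\sigma(a)$, and by the scalar Cauchy--Green formula (\ref{599}) it equals $-\pi g(\lambda)$. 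Plugging back in yields $\int g(\lambda)\,d\mu_{\eta,\xi}(\lambda)=\inner{g(c)\eta}{\xi}$. Since $g$ is holomorphic on $\sigma(a)$ (as $\dbar g|\sigma(a)=0$) and $\h$ is semi-invariant for $c$ in the appropriate sense, this compression $\inner{g(c)\eta}{\xi}_\k$ agrees with $\inner{g(a)\eta}{\xi}$ in whatever functional-calculus interpretation of $g(a)$ is in force (e.g.\ rational approximation of $g$ on $\sigma(a)$ reduces both sides to the evident identity for resolvents).

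The main obstacle is the justification of Fubini, which hinges on the integrability of $|\dbar g(\zeta)|/d(\zeta,\sigma(a))$ on the support of $\dbar g$. Since $a$ is subnormal, hence hyponormal, one has the bound $\|(\zeta1-a)^{-1}\|\le 1/d(\zeta,\sigma(a))$, so absolute convergence of the integral in the inner-product formulation reduces to this scalar estimate; the combination of compact support of $g$, boundedness of $\dbar g$, and its vanishing on $\sigma(a)$ will supply this, possibly together with the additional smoothness that the applications of the lemma later in the section provide. A secondary point is to fix the definition of $g(a)$ so that the equality with $\inner{g(c)\eta}{\xi}_\k$ is unambiguous; this is where one uses that $g|\sigma(a)$ belongs to the class of functions for which the subnormal functional calculus (and the compression $P_\h g(c)|_\h$) coincide.
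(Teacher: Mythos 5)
Your overall strategy is the same as the paper's: pass to the minimal normal extension $c$, expand $\inner{(\zeta1-a)^{-1}\eta}{\xi}$ against the spectral measure $\mu=\inner{e(\cdot)\eta}{\xi}$, interchange the order of integration, and invoke the scalar Cauchy--Green formula (\ref{599}) at each $\lambda\in\sigma(c)\subseteq\sigma(a)$. Your treatment of the meaning of $g(a)$ is also in the right spirit, though the cleaner route (the one the paper takes) is to observe that for $\xi\in\ort{\h}$ the right-hand side of (\ref{610}) vanishes because $(\zeta1-a)^{-1}\eta\in\h$; the formula itself then shows $g(c)\eta\in\h$, so $\h$ is invariant under $g(c)$ and $g(a):=g(c)|\h$ is the standard definition, compatible with (\ref{610}).

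The genuine gap is in your justification of Fubini. You propose to dominate the integrand by $\|(\zeta1-a)^{-1}\|\,|\dbar g(\zeta)|\le|\dbar g(\zeta)|/d(\zeta,\sigma(a))$ and hope this is integrable. It need not be: for a general compact $K\subseteq\bc$ the function $\zeta\mapsto d(\zeta,K)^{-1}$ is not locally integrable for planar Lebesgue measure (take $K$ a line segment), and boundedness of $\dbar g$ together with $\dbar g|K=0$ gives no rate of vanishing near $K$; so the stated hypotheses do not supply your estimate, and you cannot borrow ``additional smoothness'' from later applications when proving the lemma as stated. The correct justification avoids the resolvent-norm bound entirely: one checks absolute convergence of the \emph{double} integral $\int_H\int_K|\dbar g(\zeta)|\,|\zeta-\lambda|^{-1}\,d|\mu|(\lambda)\,dm(\zeta)$, where $H$ is the support of $\dbar g$ and $K=\sigma(a)$. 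Bounding $|\dbar g|$ by $M:=\sup|\dbar g|$ and integrating in $\zeta$ first, one uses that $\int_{D(\lambda,R)}|\zeta-\lambda|^{-1}\,dm(\zeta)=2\pi R$ by polar coordinates (the \emph{first} power of $|\zeta-\lambda|^{-1}$ is locally integrable for the planar measure), uniformly in $\lambda$, together with $|\mu|(K)\le\|\eta\|\,\|\xi\|<\infty$; this yields the bound $2\pi MR|\mu|(K)$ and Fubini applies with no extra hypothesis on $\dbar g$. (The stronger condition you were reaching for --- integrability of $|\dbar g(\zeta)|\,|\zeta-\lambda|^{-2}$ --- is exactly what is needed later, in Lemma \ref{le6111}, for the boundedness of the map $T_{a,f}$, not here.)
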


\begin{proof}Let $c\in\bk$ be the minimal normal extension of $a$, $e$ the projection valued spectral measure of $c$ (which is $0$ outside $\sigma(c)\subseteq\sigma(a)$), 
$K=\sigma(a)$ and $H=\{\zeta\in\bc:\, \dbar g(\zeta)\ne0\}$. For fixed $\eta\in\h$ and $\xi\in\k$ denote by $\mu$ the measure $\inner{e(\cdot)\eta}
{\xi}$. Then by the spectral theorem $g(c)=\int_Kg(\lambda)\, de(\lambda)$ and $(\zeta1-c)^{-1}=\int_K(\zeta-\lambda)^{-1}\,de(\lambda)$
for each $\zeta\in\bc\setminus K$ (in particular for $\zeta\in H$ since $H\cap K=0$ because of $\dbar g|K=0$), hence by
(\ref{599})
\begin{align*}\inner{g(c)\eta}{\xi}=\int_Kg(\lambda)\, d\mu(\lambda)=-\frac{1}{\pi}\int_K\int_H
\dbar g(\zeta)(\zeta-\lambda)^{-1}\, dm(\zeta)\, 
d\mu(\lambda)\\=-\frac{1}{\pi}\int_H\dbar g(\zeta)\int_K
(\zeta-\lambda)^{-1}\, d\mu(\lambda)\, dm(\zeta)\\=-\frac{1}{\pi}\int_H
\dbar g(\zeta)\inner{(\zeta1-c)^{-1}\eta}{\xi}\,dm(\zeta)=-\frac{1}{\pi}\int_{\bc\setminus\sigma(a)}\dbar g(\zeta)
\inner{(\zeta1-a)^{-1}\eta}{\xi}\,dm(\zeta).\end{align*}
For all $\xi\in\ort{\h}$ the last integrand is $0$ since $(\zeta1-a)^{-1}\eta\in\h$, hence $g(c)\eta\in\h$. Thus $\h$ is an invariant subspace for
$g(c)$ and the usual definition of $g(a)$, namely $g(a):=g(c)|\h$ (see \cite[p. 85]{Co3}),  is compatible with
(\ref{610}). To justify the interchange of order of integration in the above computation, let $M=\sup_{\zeta\in\bc}|\dbar g(\zeta)|$
and let $R$ be a constant larger than the diameter of the set $H-K$, so that for each $\lambda\in K$ the disc $D(\lambda,R)$ with the center $\lambda$ and radius $R$ 
contains $H$. Introduce the polar coordinates
by $\zeta=\lambda+re^{i\phi}$. Then by the Fubini-Tonelli theorem
\begin{align*}\int_H\int_K|\dbar g(\zeta)||\zeta-\lambda|^{-1}|\,d|\mu|(\lambda)\, dm(\zeta)
\leq M\int_H\int_K|\zeta-\lambda|^{-1}\, d|\mu|(\lambda)\, dm(\zeta)\\=
M\int_K\int_H|\zeta-\lambda|^{-1}dm\,(\zeta)\, d|\mu|(\lambda)
\leq M\int_K\int_{D(\lambda,R)}|\zeta-\lambda|^{-1}\,dm(\zeta)\, d|\mu|(\lambda)\\=M\int_K\int_0^{2\pi}\int_0^R\, dr\,d\phi\,
d|\mu|=2\pi MR|\mu|(K)<\infty.
\end{align*}
\end{proof}

Now, if $a$ and $g$ are as in Lemma \ref{le610} and if $b=g(a)$, we may compute formally for each $x\in\bh$
\begin{align*}[b,x]=[g(a),x]=-\frac{1}{\pi}\int_{\bc\setminus\sigma(a)}\dbar g(\zeta)[(\zeta1-a)^{-1},x]\, dm(\zeta)\\=
-\frac{1}{\pi}\int_{\bc\setminus\sigma(a)}\dbar g(\zeta)(\zeta1-a)^{-1}[a,x](\zeta1-a)^{-1}\, dm(\zeta)=
[a,T_{a,g}(x)],\end{align*}
where
\begin{equation}\label{611}T_{a,g}(x):=-\frac{1}{\pi}\int_{\bc\setminus\sigma(a)}\dbar g(\zeta)(\zeta1-a)^{-1}x(\zeta1-a)^{-1}\, dm(\zeta).\end{equation}
The problem here is, of course, the existence of the integral in (\ref{611}). We have to show that the map
\begin{equation}\label{612}(\eta,\xi)\mapsto
-\frac{1}{\pi}\int_{\bc\setminus\sigma(a)}\dbar g(\zeta)\inner{x(\zeta1-a)^{-1}\eta}{(\overline{\zeta}1-a^*)^{-1}\xi}\, dm(\zeta)\end{equation}
is a bounded sesquilinear form on $\h$. The following 
lemma will be helpful. 

\begin{lemma}\label{le6111}Let $a$, $g$ and $K:=\sigma(a)$ be as in Lemma \ref{le610}. If 
\begin{equation}\label{6111}\kappa:=\sup_{\lambda\in K}\int_{\bc\setminus K}|\dbar g||\zeta-\lambda|^{-2}\,dm(\zeta)
<\infty,\end{equation}
then the sesquilinear form defined by (\ref{612}) is bounded by $\frac{2}{\pi}\|x\|\kappa$.
\end{lemma}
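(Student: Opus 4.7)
The plan is to estimate the sesquilinear form pointwise via Cauchy--Schwarz in $\h$, then apply Cauchy--Schwarz to the planar integral with weight $|\dbar g|$, and finally evaluate each of the resulting $L^2(|\dbar g|\,dm)$-norms by exchanging order of integration against the spectral measure of the minimal normal extension.

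First, for each $\zeta\in\bc\setminus K$, the integrand in (\ref{612}) is bounded in absolute value by $|\dbar g(\zeta)|\,\|x\|\,A(\zeta)B(\zeta)$, where $A(\zeta):=\|(\zeta1-a)^{-1}\eta\|$ and $B(\zeta):=\|(\overline{\zeta}1-a^*)^{-1}\xi\|$. Applying Cauchy--Schwarz to the planar integral against the positive measure $|\dbar g(\zeta)|\,dm(\zeta)$ reduces the problem to proving that
\begin{equation*}
\int_{\bc\setminus K}|\dbar g(\zeta)|A(\zeta)^2\,dm(\zeta)\leq C\kappa\|\eta\|^2\quad\text{and}\quad \int_{\bc\setminus K}|\dbar g(\zeta)|B(\zeta)^2\,dm(\zeta)\leq C\kappa\|\xi\|^2
\end{equation*}
for a suitable absolute constant $C$.

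Second, to handle $A$ and $B$ simultaneously I pass to the minimal normal extension $c\in\bk$ of $a$, with spectral measure $e(\cdot)$. Because $\h$ is invariant under $c$, writing $c$ as a block matrix with respect to $\k=\h\oplus\ort{\h}$ shows that the resolvent $(\zeta1-c)^{-1}$ also preserves $\h$ and restricts to $(\zeta1-a)^{-1}$ on $\h$, while the adjoint $(\overline{\zeta}1-c^*)^{-1}$ applied to a vector of $\h$ has $(\overline{\zeta}1-a^*)^{-1}\xi$ as its $\h$-component and something else in $\ort{\h}$. Consequently $A(\zeta)=\|(\zeta1-c)^{-1}\eta\|$ and $B(\zeta)\leq\|(\overline{\zeta}1-c^*)^{-1}\xi\|$, so the spectral theorem for the normal operator $c$ gives
\begin{equation*}
A(\zeta)^2=\int_{\sigma(c)}\frac{d\nu_\eta(\lambda)}{|\zeta-\lambda|^2},\qquad B(\zeta)^2\leq\int_{\sigma(c)}\frac{d\nu_\xi(\lambda)}{|\zeta-\lambda|^2},
\end{equation*}
where $\nu_\eta(\cdot)=\|e(\cdot)\eta\|^2$ and $\nu_\xi$ analogously. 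The fact that these are genuine resolvent identities (for $A$) and inequalities (for $B$) is the main technical point; once in place, everything is positive and $\sigma(c)\subseteq K$, so Fubini's theorem applies.

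Finally, I multiply by $|\dbar g(\zeta)|$, interchange the order of integration, and pull out the hypothesis: for each fixed $\lambda\in\sigma(c)\subseteq K$ the inner integral $\int_{\bc\setminus K}|\dbar g(\zeta)||\zeta-\lambda|^{-2}\,dm(\zeta)$ is at most $\kappa$ by (\ref{6111}). Hence each of the two integrals above is bounded by $\kappa\nu_\eta(K)=\kappa\|\eta\|^2$, respectively $\kappa\|\xi\|^2$. Putting the pieces together and absorbing the factor of $1/\pi$ gives the bound $\frac{2}{\pi}\|x\|\kappa\|\eta\|\|\xi\|$ as claimed. The main obstacle is establishing the inequality $B(\zeta)\leq\|(\overline{\zeta}1-c^*)^{-1}\xi\|$, since $\h$ is only invariant for $c$ and not for $c^*$; this is resolved by the block-matrix observation above, which shows that passage from $a^*$ to $c^*$ can only enlarge the norm.
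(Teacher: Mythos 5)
Your proof is correct and follows essentially the same route as the paper's: pointwise Cauchy--Schwarz in $\h$, a splitting of the product $A(\zeta)B(\zeta)$ (the paper uses the parametrized inequality $\alpha\beta\le\tfrac12(t^2\alpha^2+t^{-2}\beta^2)$ and optimizes over $t$, which yields the same bound as your Cauchy--Schwarz in $L^2(|\dbar g|\,dm)$), and then Tonelli--Fubini against the spectral measure of the minimal normal extension together with hypothesis (\ref{6111}). Your block-matrix justification of $\|(\overline{\zeta}1-a^*)^{-1}\xi\|\le\|(\overline{\zeta}1-c^*)^{-1}\xi\|=\|(\zeta1-c)^{-1}\xi\|$ correctly and explicitly handles the one delicate point, which the paper passes over silently when it replaces $\|(\overline{\zeta}1-a^*)^{-1}\xi\|$ by $\|(\zeta1-a)^{-1}\xi\|$ in its displayed estimate.
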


\begin{proof}For any $t>0$, using first the Schwarz inequality and then the inequality $\alpha\beta\leq\frac{1}{2}(t^2\alpha^2+t^{-2}\beta^2)$ 
($\alpha,\beta\geq0$) to estimate the inner product in the integral in 
(\ref{612}), we see that the integral in (\ref{612}) is dominated by
\begin{align*}
\|x\|\int_{K^c}|\dbar g(\zeta)|\|(\zeta1-a)^{-1}\eta\|\|(\overline{\zeta}1-a^*)^{-1}\xi\|\, dm(\zeta)\leq\\
\|x\|\frac{1}{2}[t^2\int_{K^c}|\dbar g(\zeta)|
\|(\zeta1-a)^{-1}\eta\|^2\, dm(\zeta)+t^{-2}\int_{K^c}|\dbar g(\zeta)|\|(\zeta1-a)^{-1}\xi\|^2\, dm(\zeta)].
\end{align*}
Using the notation 
from the proof of Lemma \ref{le610} (with $\mu(\cdot):=\inner{e(\cdot)\xi}{\xi}$) and (\ref{6111}), we have
\begin{eqnarray*}\int_{\bc\setminus K}|\dbar g(\zeta)|\|(\zeta1-a)^{-1}\xi\|^2\, dm(\zeta)
=\int_H|\dbar g(\zeta)|\int_K|\zeta-\lambda|^{-2}\, d\mu(\lambda)\, dm(\zeta)\\
=\int_K\int_H|\dbar g(\zeta)||\zeta-\lambda|^{-2}\,dm(\zeta)\,d\mu(\lambda)\leq\kappa\mu(K)=\kappa\|\xi\|^2.
\end{eqnarray*}
Since a similar estimate holds with $\eta$ in place of $\xi$, it follows that
$$\int_H|\dbar g(\zeta)|\|(\zeta1-a)^{-1}\eta\|\|(\overline{\zeta}1-a^*)^{-1}\xi\|\, dm(\zeta)\leq
\kappa(t^2\|\eta\|^2+t^{-2}\|\xi\|^2).$$
Taking the infimum over all $t>0$ we get 
$$\frac{1}{\pi}\int_H|\dbar g(\zeta)|\|t(\zeta1-a)^{-1}\eta\|\|t^{-1}(\overline{\zeta}1-a^*)^{-1}\xi\|\, 
dm(\zeta)\leq\frac{2}{\pi}\kappa\|\eta\|\|\xi\|.$$
\end{proof}

\begin{re}\label{61111}Lemma \ref{le6111} applies, for example, 
if $\dbar g$ is a Lipschitz function of order $\alpha$, that is 
$|\dbar g(\zeta)-\dbar g(\zeta_0)|\leq\beta|\zeta-\zeta_0|^{\alpha}$ ($\zeta,\zeta_0\in\bc$)
for some positive constants $\alpha$ and $\beta$, with $\dbar g|K=0$. 
In this case the integral (\ref{6111}) may be estimated by noting that the Lipschitz condition (together with $\dbar g|K=0$) 
implies that $|\dbar g(\zeta)|\leq
\beta\delta(\zeta,K)^{\alpha}$, where $\delta(\zeta,K)$ is the distance from $\zeta$ to $K$. Let $R>0$ be so large that
for each $\lambda\in K$ the closed dics $D(\lambda,R)$ with the center $\lambda$ and radius $R$ contains $H$, where $H$ is as in the proof of Lemma \ref{le610}. Introducing the
polar coordinates by $\zeta=\lambda+re^{i\phi}$, for each $\lambda\in K$ we have
\begin{align*}\int_{\bc\setminus K}|\dbar g(\zeta)||\zeta-\lambda|^{-2}\,dm(\zeta)
\leq\beta\int_H\frac{\delta(\zeta,K)^{\alpha}}{|\zeta-\lambda|^2}\, dm(\zeta)
\leq\beta\int_{D(\lambda,R)}|\zeta-\lambda|^{\alpha-2}\, dm(\zeta)\\
=2\pi\beta\alpha^{-1} R^{\alpha}.\end{align*}
\end{re}

\begin{de}A function $f$ on a compact subset $K\subseteq\bc$ is in the {\em class ${\rm L}(1+\alpha,K)$} (where $\alpha\in(0,1]$) if the limit 
\begin{equation}\label{63}f^{\prime}(\zeta_0)=\lim_{\zeta\to\zeta_0,\ \zeta\in\sigma(a)}\frac{f(\zeta)-f(\zeta_0)}{\zeta-\zeta_0}\end{equation}
exists for each (nonisolated) $\zeta_0\in K$ and if  there exists a constant $\kappa>0$ such that
\begin{equation}\label{64}|f(\zeta)-f(\zeta_0)-f^{\prime}(\zeta_0)(\zeta-\zeta_0)|\leq\kappa|\zeta-\zeta_0|^{1+\alpha}\end{equation}
and
\begin{equation}\label{65}|f^{\prime}(\zeta)-f^{\prime}(\zeta_0)|\leq\kappa|\zeta-\zeta_0|^{\alpha}\end{equation}
for all $\zeta,\zeta_0\in K$.
\end{de}

We need the following consequence of the Whitney extension theorem.

\begin{lemma}\label{le64}Each $f\in{\rm L}(1+\alpha,K)$ can be extended to a continuously differentiable function $g$ on $\bc$
with compact support such that $\dbar g$ 
is a Lipschitz function of order $\alpha$ and $\dbar g(\zeta)=0$ if $\zeta\in K$ (even though $K$ may have empty interior).
\end{lemma}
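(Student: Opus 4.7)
The plan is to apply the Whitney extension theorem for $C^{1,\alpha}$ functions on $\bc\cong\br^{2}$ to the $1$-jet on $K$ prescribed by $f$ and $f^{\prime}$, and then to multiply by a smooth cutoff to obtain compact support. In real coordinates $\zeta=x+iy$, the complex linear approximation $\zeta\mapsto f^{\prime}(\zeta_{0})(\zeta-\zeta_{0})$ corresponds to the real partial derivatives $u_{1}(\zeta_{0}):=f^{\prime}(\zeta_{0})$ and $u_{2}(\zeta_{0}):=if^{\prime}(\zeta_{0})$; condition (\ref{64}) is then precisely the first-order Taylor-remainder estimate required by Whitney, with $\alpha$-Hölder control, and (\ref{65}) supplies the $\alpha$-Hölder continuity of the prescribed partials $u_{1},u_{2}$ on $K$. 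Whitney's theorem therefore provides a function $\tilde{g}\in C^{1,\alpha}(\br^{2})$ with
$$\tilde{g}|_{K}=f,\quad \partial_{x}\tilde{g}|_{K}=f^{\prime},\quad \partial_{y}\tilde{g}|_{K}=if^{\prime}.$$

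From these boundary values, at every $\zeta_{0}\in K$,
$$\dbar\tilde{g}(\zeta_{0})=\tfrac{1}{2}(\partial_{x}+i\partial_{y})\tilde{g}(\zeta_{0})=\tfrac{1}{2}\bigl(f^{\prime}(\zeta_{0})+i\cdot if^{\prime}(\zeta_{0})\bigr)=0,$$
and since $\tilde{g}\in C^{1,\alpha}(\br^{2})$ the function $\dbar\tilde{g}$ is $\alpha$-Hölder on $\br^{2}$. To produce compact support I will choose $\chi\in C_{c}^{\infty}(\bc)$ with $\chi\equiv1$ on an open neighbourhood $U\supseteq K$ and set $g:=\chi\tilde{g}$. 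Then $g$ is compactly supported, and
$$\dbar g=\chi\,\dbar\tilde{g}+\tilde{g}\,\dbar\chi$$
vanishes on $K$ (since there $\chi=1$, $\dbar\chi=0$, and $\dbar\tilde{g}=0$); globally it is $\alpha$-Hölder, as a sum of a smooth compactly supported function times an $\alpha$-Hölder function and of a $C^{1,\alpha}$ function times a smooth compactly supported function.

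The only delicate step is the translation of the intrinsic ``complex'' condition $f\in L(1+\alpha,K)$ into the standard real Whitney hypothesis, i.e.\ verifying that the $1$-jet $(f,u_{1},u_{2})$ with $u_{2}=iu_{1}$ fulfils the $C^{1,\alpha}$ Whitney compatibility conditions on $K$. This is read off directly from (\ref{64}) and (\ref{65}); it is exactly the Cauchy--Riemann relation between the prescribed partials that forces $\dbar\tilde{g}|_{K}=0$. Once this identification is made, the Whitney extension theorem and the cutoff argument are both routine, and together they yield the desired $g$.
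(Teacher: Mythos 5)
Your proposal is correct and follows essentially the same route as the paper: both reduce to the classical Whitney extension theorem by prescribing on $K$ the $1$-jet with partials $f^{\prime}$ and $if^{\prime}$ (the paper writes this out explicitly in terms of real and imaginary parts $f_1,f_2,h_1,h_2$ and the Cauchy--Riemann relations (\ref{66})--(\ref{67}), while you keep the jet complex-valued), observe that (\ref{64}) and (\ref{65}) are exactly the Whitney compatibility conditions with $\alpha$-H\"older control, and finish with a smooth cutoff equal to $1$ on $K$. No gaps.
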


\begin{proof}It suffices to extend $f$ to a differentiable function $g$ with  $\dbar g$  and $\partial g$ Lipschitz of order $\alpha$ and $\dbar g|K=0$, for then we simply replace $g$ by $hg$, where $h$ is a smooth function (that is, has continuous partial derivatives of all orders)  
with  compact support which is equal to $1$ on $K$. (Namely, $gh$ then has  compact support and $\dbar(hg)$ and $\partial(hg)$ are easily seen to be Lipschitz of order $\alpha$ with $\dbar(hg)|K=(\dbar hg+h\dbar g)|K=0$ since $h|K=1$.) Let $\zeta=x+iy$,  $f=f_1+if_2$ and $f^{\prime}(\zeta)=h_1(\zeta)+ih_2(\zeta)$, where $f_1, f_2$ and 
$h_1,h_2$ are real valued functions on $K$. It follows from (\ref{64}) and (\ref{65}) that for any $\zeta,\zeta_0\in K$ 
$$f_1(\zeta)=f_1(\zeta_0)+h_1(\zeta_0)(x-x_0)-h_2(\zeta_0)(y-y_0)+R(\zeta,\zeta_1)$$
and
$$h_j(\zeta)=h_j(\zeta_0)+R_j(\zeta,\zeta_0)\ (j=1,2),$$
where $R$ and $R_j$ are functions satisfying $|R(\zeta,\zeta_1)|\leq \kappa|\zeta-\zeta_0|^{1+\alpha}$ and 
$|R_j(\zeta,\zeta_0)|\leq\kappa|\zeta-\zeta_0|^{\alpha}$. By the Whitney extension theorem \cite[p. 177]{St} $f_1$ can be extended to a differentiable
function $g_1$ on $\bc$ such that the partial derivatives of $g_1$ are Lipschitz of order $\alpha$
and 
\begin{equation}\label{66}\frac{\partial g_1}{\partial x}=h_1,\ \ \frac{\partial g_1}{\partial y}=-h_2\ \mbox{on}\ K.\end{equation}
Similarly $f_2$ can be
extended to an appropriate function $g_2$ such that 
\begin{equation}\label{67}\frac{\partial g_2}{\partial x}=h_2,\ \ \frac{\partial g_2}{\partial y}=h_1\ \mbox{on}\ K.\end{equation}
Then $g:=g_1+ig_2$ is a required extension of $f$ since (\ref{66}) and (\ref{67}) imply that $\dbar g=0$ on $K$.
\end{proof}

In all of the above discussion in this subsection we may replace the operator $a$ by $a^{(\infty)}$ acting on $\h^{\infty}$, which
implies that the map $T_{a,g}$ defined by (\ref{611}) is completely bounded. Taking in (\ref{612}) $\xi$ and $\eta$
to be in $\h^{\infty}$, we see that
\begin{equation}\label{107}\inner{T_{a,g}(x)}{\rho}=-\frac{1}{\pi}\int_{\bc\setminus\sigma(a)}\dbar g(\zeta)
\inner{x}{(\zeta1-a)^{-1}\rho(\zeta1-a)^{-1}}\, dm(\zeta)=\inner{x}{(T_{a,g})_{\sharp}(\rho)}\end{equation}
for each $\rho=\eta\otimes\xi^*$ in the predual of $\bh$, where
\begin{align*}(T_{a,g})_{\sharp}(\rho)=-\frac{1}{\pi}\int_{\bc\setminus\sigma(a)}\dbar g(\zeta)(\zeta1-a)^{-1}\rho(\zeta1-a)^{-1}\,
dm(\zeta)\\
=-\frac{1}{\pi}\int_{\bc\setminus\sigma(a)}\dbar g(\zeta)
(\zeta1-a)^{-1}\eta\otimes((\overline{\zeta}1-a^*)^{-1})\xi)^*\, dm(\zeta).\end{align*}
A similar computation as in the proof of Lemma \ref{le6111} shows that the last integral exists and that
$\|(T_{a,g})_{\sharp}(\rho)\|\leq{\rm const.}\|\rho\|$.
Therefore we conclude that $T_{a,g}$ is weak* continuous. Further, if $S$ is any weak* continuous
$\com{(a)}$-bimodule endomorphism of $\bh$, then $S$ commutes in particular with multiplications by $(\zeta1-a)^{-1}$
and, using (\ref{107}), it follows that $S$ commutes with $T_{a,g}$. Collecting all the above results, we have proved the 
following theorem.

\begin{theorem}\label{th65}For a subnormal operator $a\in\bh$ and a function $f\in{\rm L}(1+\alpha,\sigma(a))$ 
($\alpha\in(0,1]$) let $g$ be the extension of $f$ as in Lemma \ref{le64}.
Then the map $T_{a,g}$ defined by (\ref{611}) is a central element in the algebra of all 
normal completely bounded $\com{(a)}$-bimodule endomorphisms
of $\bh$ such that $[f(a),x]=[a,T_{a,g}(x)]=T_{a,g}([a,x])$ for all $x\in\bh$. In particular the range of $d_{f(a)}$ is 
contained in the range of $d_a$ and  (\ref{598}) holds.
\end{theorem}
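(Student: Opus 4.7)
The plan is to stitch the theorem together from the lemmas of this section. First, apply Lemma \ref{le64} to extend $f$ to a compactly supported $C^{1}$ function $g$ on $\bc$ with $\dbar g$ Lipschitz of order $\alpha$ and $\dbar g|\sigma(a)=0$. Remark \ref{61111} then shows that such a $g$ satisfies the finiteness condition (\ref{6111}) with some finite constant $\kappa$, so Lemma \ref{le6111} tells us that the sesquilinear form (\ref{612}) is bounded by $(2\kappa/\pi)\|x\|$. This defines $T_{a,f}(x)\in\bh$ via (\ref{611}), interpreted as a weak integral. Running the same construction with $a$ and $x$ replaced by $a^{(\infty)}$ and a matrix of operators (as indicated in the paragraph preceding the theorem) yields a uniform bound on $T_{a,f}^{(n)}$ for every $n$, whence $T_{a,f}$ is completely bounded.

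The intertwining identity is then a bookkeeping calculation on the integrand. Since $(\zeta 1-a)^{-1}$ commutes with $a$,
\[
[(\zeta 1-a)^{-1},x]=(\zeta 1-a)^{-1}[a,x](\zeta 1-a)^{-1}=[a,(\zeta 1-a)^{-1}x(\zeta 1-a)^{-1}].
\]
Substituting this into the commutator of $x$ with the integral representation of $f(a)$ supplied by Lemma \ref{le610} gives
\[
[f(a),x]=T_{a,f}([a,x])=[a,T_{a,f}(x)],
\]
which in turn yields $d_{f(a)}(\bh)\subseteq d_a(\bh)$ and the commutator estimate (\ref{598}).

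It remains to verify the bimodule, normality, and centrality statements. Every $c\in\com{(a)}$ commutes with each $(\zeta 1-a)^{-1}$, so the integral formula immediately makes $T_{a,f}$ a $\com{(a)}$-bimodule map. Weak* continuity is produced by the predual calculation (\ref{107}): evaluating on rank-one functionals $\rho=\eta\otimes\xi^{*}$ and repeating the estimate of Lemma \ref{le6111} shows that the formula for $(T_{a,f})_\sharp$ defines a bounded operator on $\trh$ whose Banach-space adjoint is $T_{a,f}$. For centrality, let $S$ be any normal completely bounded $\com{(a)}$-bimodule endomorphism of $\bh$. Then $S$ commutes with left and right multiplication by the elements $(\zeta 1-a)^{-1}\in\com{(a)}$; pulling this commutation past the integral defining $T_{a,f}$, which is legitimate because of the weak* continuity of $S$ and the dominated convergence afforded by the $L^{1}$ estimates of Lemma \ref{le6111}, gives $ST_{a,f}=T_{a,f}S$.

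The main obstacle is the rigorous handling of the weak integrals: we must check that the formal operator-valued formulas for $T_{a,f}$, its predual map $(T_{a,f})_\sharp$, and the interchange with $S$ all converge and represent honest bounded maps. The scalar bound in Lemma \ref{le6111}, together with Fubini on the scalar-valued integrands that arise from pairing with vectors or trace-class operators, is precisely what licenses each of these interchanges, so once the lemmas of the section are in place the remaining work is almost mechanical.
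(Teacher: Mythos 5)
Your proposal is correct and follows essentially the same route as the paper: extend $f$ via Lemma \ref{le64}, verify the integrability condition (\ref{6111}) through Remark \ref{61111}, bound the sesquilinear form by Lemma \ref{le6111}, derive the intertwining identity from the resolvent commutator formula and Lemma \ref{le610}, and obtain complete boundedness, normality and centrality from the $a^{(\infty)}$ amplification, the predual formula (\ref{107}), and commutation with $(\zeta 1-a)^{-1}$, exactly as in the discussion preceding the theorem. No gaps beyond the level of informality the paper itself adopts.
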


Now we are going to show 
that if $\sigma(a)$ is nice enough, then the Lipschitz type condition on $f$ in Theorem \ref{th65} can be relaxed:
$f$ only needs to be a Schur function. First suppose  that $\sigma(a)$ is the closed unit disc
$\weakc{\bd}$. For each $r\in(0,1)$ let $f_r(\zeta)=f(r\zeta)$. Thus each $f_r$ is a holomorphic function on a 
neighborhood $\Omega_r$ of $\weakc{\bd}$ and $f_r(a)$ can be expressed as $f(a)=\frac{1}{2\pi i}\int_{\Gamma_r}f_r(\zeta)(\zeta1-a)^{-1}\,
d\zeta$, where $\Gamma_r$ is a contour in $\Omega_r$ surrounding $\sigma(a)$ once in a positive direction.
Then for each $x\in\bh$ we have
$$[f_r(a),x]=\frac{1}{2\pi i}\int_{\Gamma_r}f(r\zeta)[(\zeta-a)^{-1},x]\, d\zeta=
\frac{1}{2\pi i}\int_{\Gamma_r}f(r\zeta)(\zeta-a)^{-1}[a,x](\zeta-a)^{-1}\, d\zeta,$$
hence
\begin{equation}\label{100}[f_r(a),x]=T_r([a,x])\ \ \mbox{and similarly}\ \
[f_r(a),x]=[a,T_r(x)]\end{equation}
where 
$$T_r(x)=\frac{1}{2\pi i}\int_{\Gamma_r}f(r\zeta)
(\zeta-a)^{-1}x(\zeta-a)^{-1}\,d\zeta.$$
If the set of completely bounded maps $T_r$ on $\bh$ ($0<r<1$) is bounded, then it has a 
limit point, say $T$,
in the weak* topology (which the space of all completely bounded maps on $\bh$ carries as a dual space, see e.g.
\cite[1.5.14 (4)]{BLM}). $T$ commutes with left and right multiplications by elements of $\com{(a)}$ 
(since all $T_r$ do). Since $f$ is continuous, $\|f_r(a)-f(a)\|\stackrel{r\to1}{\longrightarrow}0$ (this holds already if $a$ is replaced by its minimal normal extension). Then from (\ref{100}) we see that $[f(a),x]=T([a,x])$, hence
$$[f(a),x]=T([a,x])=[a,Tx]\ \ (x\in\bh).$$
These equalities hold also for $a^{(n)}$ in place of $a$ and for $x\in\matn{\bh}$, 
and  (\ref{598}) is also a consequence of $[f(a),x]=T([a,x])$. 

To estimate the norms of the maps $T_r$, let $c$ on $\k\supseteq\h$
be the unitary power dilation of $a$ (so that $a^n=pc^n|\h$ for all $n\in\bn$, where $p$ is the orthogonal
projection from $\k$ onto $\h$, see e.g. \cite{Hal} or \cite{Pa}).
Let $S_r$ be the map on $\bk$ defined by $S_r(x)=\frac{1}{2\pi i}\int_{\Gamma_r}f(r\zeta)
(\zeta-c)^{-1}x(\zeta-c)^{-1}\,d\zeta$. Then (since $\zeta-a$ is invertible if $\zeta\in\Gamma_r$) $T_r(x)=pS_r(x)|\h$
for each $x\in\bh$, where $x$ is regarded as an operator on $\k$ by setting $x|\h^{\perp}=0$. Hence $\|T_r\|\leq
\|S_r\|$.  In the special case when $c$ is
diagonal (relative to some orthonormal basis of $\k$) with eigenvalues $\lambda_i$ and $x=[x_{i,j}]$, a simple computation shows that $S_r(x)$ is represented by the matrix
$r[\frac{f(r\lambda_i)-f(r\lambda_j)}{r\lambda_i-r\lambda_j}x_{i,j}]$ (where the quotient is taken to be $f^{\prime}(r\lambda_j)$ if $\lambda_i=
\lambda_j$). Hence in this case $\|S_r\|\leq\kappa$ since $f$ is a Schur function. Since any normal operator $c$ can be approximated
uniformly by diagonal operators, it follows from the formula defining $S_r$ that the same estimate must hold for all such $c$ with $\sigma(c)\subseteq\bd$.
A similar reasoning applies also to the completely bounded norm, hence it follows that $\sup_{0<r<1}\|T_r\|_{\rm cb}<
\infty$.

Let us now consider the case when $\sigma(a)$ is the closure of its interior $U$ and $U$ is simply
connected. 
Let $h$ be a conformal bijection from $\bd$ onto $U$. If the boundary $\partial{\sigma(a)}$ of ${\sigma(a)}$ is 
sufficiently
nice, say a Jordan curve of class $C^3$, then $h$ can be extended to a bijection, denoted again by $h$, 
from $\weakc{\bd}$ onto
$\weakc{U}=\sigma(a)$, such that $h$ and $h^{-1}$ are in the class $C^2$ \cite[5.2.4]{Kr}. Then by
Theorem \ref{th65} and Proposition \ref{pr521} 
$h$ and $h^{-1}$ are Schur functions. Let
$a_0=h^{-1}(a)$. Note that $\{a_0,1\}$ generates the same Banach algebra as $\{a,1\}$ since $h$ and $h^{-1}$ can both be 
uniformly approximated by polynomials (by Mergelyan's theorem). For any Schur function 
$f$ on $\sigma(a)$
the composition $f_0:=f\circ h$ is a Schur function on $\weakc{\bd}$. (To see this, note that for any 
$\lambda\ne\mu$ in $\weakc{\bd}$ 
we may write $\frac{f(h(\lambda))-f(h(\mu))}{\lambda-\mu}=\frac{f(h(\lambda))-f(h(\mu))}{h(\lambda)-h(\mu)}
\frac{h(\lambda)-h(\mu)}{\lambda-\mu}$ and that the inequality
$\|[x_{i,j}y_{i,j}]\|_S\leq\|[x_{i,j}]\|_S\|[y_{i,j}]\|_S$ holds for the Schur norm of the Schur product of two
matrices.) Note that $f(a)=f_0(a_0)$
and  $\com{(a_0)}=\com{(a)}$.
By the previous paragraph
there exists a completely bounded $\com{(a_0)}$-bimodule map $T$ on $\bh$ such that 
$[f_0(a_0),x]=[a_0,Tx]=T([a_0,x])$, hence (using $a_0=h^{-1}(a)$ and $f_0(a_0)=f(a)$) 
\begin{equation}\label{102}[h^{-1}(a),Tx]=[f(a),x]=T([h^{-1}(a),x])\ \ \mbox{for all}\ x\in\bh.\end{equation} Now the map
$T$ is not a priori normal, but it can be replaced by its normal part $T_n$ in (\ref{102}), 
hence we may achieve that $T$ is normal. (Namely, let $T=T_n+T_s$ be the decomposition of $T$ into its normal and singular part \cite[III.2.15]{T}. This decomposition has similar properties as in the special case of linear functionals \cite[10.1.15]{KR}. Then the first equality in (\ref{102}) can be rewritten as
$[h^{-1}(a),T_nx]-[f(a),x]=-[h^{-1}(a),T_sx]$. Since the left side of the last equality is a normal function of $x$, while the right side is singular, both must be $0$. This shows that $T$ can be replaced by $T_n$ in the first equality of (\ref{102}) and a similar argument applies also to the second equality.) By Theorem \ref{th65}
there exists a completely bounded 
$\com{(a)}$-bimodule map $S$ on $\bh$ such that \begin{equation}\label{103}[a,Sy]=[h^{-1}(a),y]=S([a,y])\ \ \mbox{for all}\ y\in\bh\end{equation}
 and $S$
commutes with all normal $\com{(a)}$-bimodule maps on $\bh$ (in particular with $T$). From the first equality in (\ref{102}) and in
(\ref{103}) (with $y=Tx$)  we
have now $[f(a),x]=[h^{-1}(a),Tx]=[a,STx]$,  while from the remaining two equalities in (\ref{102}) and (\ref{103}) (with $y=x$) we deduce that $[f(a),x]=T([h^{-1}(a),x])=TS([a,x])$ for all $x\in\bh$. 
Denoting $T_{a,f}=TS=ST$, we have deduced the following theorem.

\begin{theorem}\label{th621}For a subnormal $a\in\bh$ suppose that $\sigma(a)$ is the closure of a simply connected
domain bounded by a Jordan curve
of class $C^3$. Then for 
each Schur function $f$ on $\sigma(a)$ there exists a (normal) completely bounded $\com{(a)}$-bimodule map $T_{a,f}$ on $\bh$
such that $[f(a),x]=[a,T_{a,f}(x)]=T_{a,f}([a,x])$ for all $x\in\bh$. (In particular the inequality (\ref{50}) holds for $b=f(a)$ with $\kappa=\|T_{a,f}\|$.)
\end{theorem}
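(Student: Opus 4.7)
The plan is to reduce the theorem to the case where $\sigma(a)=\weakc{\bd}$ by a conformal change of variables, and to handle the disc case directly by a Cauchy-integral construction combined with a dilation argument. The desired map $T_{a,f}$ will be built as a weak*-limit of explicit elementary tensor-type maps, and its $\com{(a)}$-bimodule structure will be automatic from the construction.

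For the disc case, I would first set $f_r(\zeta):=f(r\zeta)$ for $0<r<1$; each $f_r$ is holomorphic on a neighborhood of $\weakc{\bd}$, so the Riesz--Dunford calculus gives
\[
[f_r(a),x]=[a,T_r(x)],\qquad T_r(x):=\frac{1}{2\pi i}\int_{\Gamma_r}f(r\zeta)(\zeta 1-a)^{-1}x(\zeta 1-a)^{-1}\,d\zeta,
\]
where $\Gamma_r$ is a contour in the domain of holomorphy surrounding $\sigma(a)$. Each $T_r$ is visibly a normal completely bounded $\com{(a)}$-bimodule map. The crucial quantitative step is to bound $\|T_r\|_{\rm cb}$ independently of $r$; for this I would use the unitary power dilation $c$ of $a$ on a larger space $\k\supseteq\h$, define the analogous map $S_r$ on $\bk$, and note $T_r(x)=pS_r(x)|\h$, so $\|T_r\|_{\rm cb}\le\|S_r\|_{\rm cb}$. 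Then, approximating $c$ uniformly by diagonal unitaries with eigenvalues $\lambda_i$, a direct computation shows $S_r$ becomes the Schur multiplier $[(f_r(\lambda_i)-f_r(\lambda_j))/(\lambda_i-\lambda_j)]$, whose norm is uniformly bounded because $f$ is a Schur function on $\weakc{\bd}$ (the Schur norm passes to limits as $r\to 1$). A weak*-cluster point $T$ of $\{T_r\}_{r<1}$ in $\cb{\bh}$ (using that $\cb{\bh}$ is a dual space) then satisfies $[f(a),x]=[a,Tx]=T([a,x])$, and replacing $T$ by its normal part---which preserves both identities since derivations are normal---gives the required $T_{a,f}$ in this case.

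For the general case, since $\partial\sigma(a)$ is a $C^3$ Jordan curve the Riemann map $h\colon\bd\to\mathrm{int}\,\sigma(a)$ extends to a $C^2$ bijection $\weakc{\bd}\to\sigma(a)$ with $C^2$ inverse. Both $h$ and $h^{-1}$ therefore belong to the class $\mathrm{L}(1+\alpha,\cdot)$ with $\alpha=1$, so Theorem \ref{th65} applies and, combined with Lemma \ref{le521}, forces them to be Schur functions. Setting $a_0:=h^{-1}(a)$, Mergelyan's theorem shows $\{a_0,1\}$ and $\{a,1\}$ generate the same norm-closed algebra and $\com{(a_0)}=\com{(a)}$. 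The composition $f_0:=f\circ h$ is a Schur function on $\weakc{\bd}$, since the Schur product factorization
\[
\frac{f_0(\lambda_i)-f_0(\lambda_j)}{\lambda_i-\lambda_j}=\frac{f(h(\lambda_i))-f(h(\lambda_j))}{h(\lambda_i)-h(\lambda_j)}\cdot\frac{h(\lambda_i)-h(\lambda_j)}{\lambda_i-\lambda_j}
\]
is a Hadamard product of two Schur multipliers. Applying the disc case yields a normal completely bounded $\com{(a_0)}$-bimodule map $T$ with $[f(a),x]=[f_0(a_0),x]=[a_0,Tx]=T([a_0,x])$, and applying Theorem \ref{th65} to $h^{-1}\in\mathrm{L}(2,\sigma(a))$ yields a central normal cb bimodule map $S$ with $[h^{-1}(a),y]=[a,Sy]=S([a,y])$. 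Because $S$ is central among normal cb $\com{(a)}$-bimodule maps, it commutes with $T$, and $T_{a,f}:=TS=ST$ satisfies $[f(a),x]=[a,T_{a,f}(x)]=T_{a,f}([a,x])$.

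The main obstacle I expect is the uniform cb-norm bound on $\{T_r\}$ in the disc case: translating the Schur multiplier hypothesis---which is a statement about finite matrix norms---into a genuine completely bounded estimate on the infinite-dimensional integral operator $T_r$ requires the dilation step together with the diagonal-approximation argument, and one must verify that the Schur norms of the matrices attached to $f_r$ converge to (or are controlled by) those of $f$ as $r\to 1$. A secondary technical point is ensuring that the normal part of the weak*-cluster point still obeys the commutator identity; this uses the observation, already exploited in the proof of Proposition \ref{pr50}, that on the range of $d_a$ the normal/singular decomposition is compatible with the $\com{(a)}$-bimodule action.
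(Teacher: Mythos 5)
Your proposal follows essentially the same route as the paper's own argument: the disc case via the maps $T_r$ built from $f_r(\zeta)=f(r\zeta)$ and the Riesz--Dunford integral, the uniform cb-bound through the unitary power dilation and diagonal approximation, a weak* cluster point, and then the conformal transfer using the $C^2$ extension of the Riemann map, the Hadamard-product argument showing $f\circ h$ is Schur, and the factorization $T_{a,f}=TS=ST$ with $S$ obtained from Theorem \ref{th65} applied to $h^{-1}$. The proof is correct and matches the paper in all essential steps, including the technical points you flag (the dilation step for the cb-bound and the compatibility of the normal part with the commutator identities).
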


In general, the Lipschitz type condition in Theorem \ref{th65} can be replaced by a similar, but less restrictive condition,
which involves a regular  modulus of continuity $\omega$
in the sense of \cite[p. 175]{St} (instead of just $\omega(t)=t^{\alpha}$) such that $\int_0^1\omega(r)/r\, dr<\infty$. 
(There exists an appropriate version of Whitney's extension theorem \cite[p. 194]{St}.)
But probably even this is too restrictive, for we do not need any requirements about 
$\partial g$ of the extension $g$ (only requirements about $\dbar g$).

\end{document}